\numberwithin{equation}{section}
\newtheorem{thm}{Theorem}[section]
\newtheorem{lem}{Lemma}[section]
\newtheorem{prop}{Proposition}[section]
\theoremstyle{definition}
\newtheorem{defn}{Definition}[section]
\theoremstyle{remark}
\newtheorem{rem}{Remark}[section]
\begin{document}

\title{Global well-posedness for the incompressible viscoelastic fluids in the critical $L^p$ framework}

\author{Ting Zhang\thanks{E-mail: zhangting79@zju.edu.cn}, Daoyuan Fang\thanks{E-mail: dyf@zju.edu.cn}
 \\
\textit{\small Department of Mathematics, Zhejiang University,
Hangzhou 310027, China} }
\date{}
\maketitle

\begin{abstract}
We investigate   global strong solutions for the
incompressible viscoelastic system of Oldroyd--B type with the initial data close to a stable equilibrium. We obtain the
existence and uniqueness of the global solution in a functional setting
invariant by the scaling of the associated equations, where the initial velocity has the same critical regularity
index as for the incompressible Navier--Stokes equations, and one
more derivative is needed for the deformation tensor.
Like the classical incompressible Navier-Stokes, one may construct the unique global   solution for a class of large highly oscillating initial velocity.  Our result also
implies that the deformation tensor $F$ has the same regularity as
the density of the compressible Navier--Stokes equations.
\end{abstract}

\section{Introduction}
In this paper, we consider the following system describing
incompressible viscoelastic fluids.
\begin{equation}
  \left\{\begin{array}{l}
    \nabla \cdot v=0, \ \ x\in \mathbb{R}^N,\ N\geq 2,\\
        v_t+v\cdot \nabla v+\nabla p=\mu\Delta v+\nabla\cdot\left[
        \frac{\partial W(F)}{\partial F}F^\top
        \right],\\
            F_t+v\cdot \nabla F=\nabla vF,\\
             F(0,x)=I+E_0(x),
   \ v(0,x)=v_0(x).
  \end{array}
  \right.\label{vis2-E1.1}
\end{equation}
Here, $v$, $p$, $\mu>0$, $F$ and $W(F)$ denote, respectively, the
velocity field of materials, pressure, viscosity, deformation tensor
and elastic energy functional.  The third equation is simply the
consequence of the chain law. It can also be regarded as the
consistence condition of the flow trajectories obtained from the
velocity field $v$ and also of those obtained from the deformation
tensor $F$ \cite{Dafermos,Gurtin,Lei,Lin,Liu}. Moreover, on the
right-hand side of the momentum equation, $\frac{\partial
W(F)}{\partial F}$ is the Piola--Kirchhoff stress tensor and
$\frac{\partial W(F)}{\partial F}F^\top$ is the Cauchy--Green
tensor. The latter is the change variable (from Lagrangian to
Eulerian coordinates) form of the former one \cite{Lei}. The above
system is equivalent to the usual Oldroyd--B model for viscoelastic
fluids in the case of infinite Weissenberg number \cite{Larson}. On
the other hand, without the viscosity term, it represents exactly
the incompressible elasticity in Eulerian coordinates. We refer to
\cite{Byron,Dafermos,Larson,Lin2,Lions,Liu} and their references for
the detailed derivation and physical background of the above system.

Throughout this paper, we will use the notations of
    $$
    (\nabla v)_{ij}=\frac{\partial v_i}{\partial x_j},
     \ (\nabla v F)_{ij}=(\nabla v)_{ik}F_{kj},
     \ (\nabla\cdot F)_{i}=\partial_j F_{ij},
    $$
and summation over repeated indices will always be understood.

For incompressible viscoelastic fluids, Lin et al. \cite{Lin} proved
the global existence of classical small solutions for the
two--dimensional case with the initial data $v_0,E_0=F_0-I\in
H^k(\mathbb{R}^2)$, $k\geq2$, by introducing an auxiliary vector
field to replace the transport variable $F$. Using the method in
\cite{Kawashima} for the damped wave equation, they \cite{Lin} also
obtained the global existence of classical small solutions for the
three--dimensional case with the initial data $v_0,E_0\in
H^k(\mathbb{R}^3)$, $k\geq3$. Lei and Zhou \cite{Lei3} obtained the
same results for the two--dimensional case with the initial data
$v_0,E_0\in H^s(\mathbb{R}^2)$, $s\geq4$. via the incompressible
limit working directly on the deformation tensor $F$. Then, Lei et
al. \cite{Lei2} proved the global existence for two--dimensional
small-strain viscoelasticity with $H^2(\mathbb{R}^2)$ initial data
and without assumptions on the smallness of the rotational part of
the initial deformation tensor.  It is worth noticing that the
global existence and uniqueness for the large solution of the
two--dimensional problem is still open.  Recently, by introducing an
auxiliary function $w=\Delta v+\frac{1}{\mu}\nabla\cdot E$, Lei et
al. \cite{Lei} obtained a weak dissipation on the deformation $F$
and the global existence of classical small solutions to
$N$--dimensional system with the initial data $v_0,E_0\in
H^2(\mathbb{R}^N)$ and $N=2,3$. All these results need that the
initial data $v_0$ and $E_0$ have the same regularity, and the
regularity index of the initial velocity $v_0$ is bigger than the
critical regularity index for the classical incompressible
Navier--Stokes equations.

There are some classical results for the following
incompressible Navier--Stokes equations.
\begin{equation}
  \left\{\begin{array}{l}
        v_t+v\cdot \nabla v+\nabla p=\mu\Delta v,\\\nabla \cdot v=0, \\
   \ v(0,x)=v_0(x).
  \end{array}
  \right.\label{vis2-E1.1-NS}
\end{equation}
In 1934, J. Leray proved the existence of global weak solutions for
(\ref{vis2-E1.1-NS}) with divergence--free $u_0\in L^2$ (see
\cite{Leray}). Then, H. Fujita and T. Kato (see \cite{Fujita})
obtain the uniqueness with $u_0\in \dot{H}^{\frac{N}{2}-1}$. The
index $s= N/2-1$ is critical for (\ref{vis2-E1.1-NS}) with initial
data in $\dot{H}^s$: this is the lowest index for which uniqueness
has been proved (in the framework of Sobolev spaces). This fact is
closely linked to the concept of scaling invariant space. Let us
precise what we mean. For all $l > 0$, system (\ref{vis2-E1.1-NS})
is obviously invariant by the transformation
$$u(t, x) \rightarrow u_l(t, x) := lu(l^2t, lx),
u_0(x)\rightarrow u_{0l}(x):= lu_0(lx),$$
 and a straightforward
computation shows that $\|u_0\|_{\dot{H}^{\frac{N}{2}-1}}
=\|u_{0l}\|_{\dot{H}^{\frac{N}{2}-1}}$. This idea of using a
functional setting invariant by the scaling of (\ref{vis2-E1.1-NS})
is now classical and originated many works.   In \cite{Cannone93},     M.
Cannone, Y. Meyer and F. Planchon proved that: if the initial data
satisfy
    $$
    \|v_0\|_{\dot{B}^{-1+\frac{N}{p}}_{p,\infty}}\leq c\nu,
    $$
for  $p>N$ and some constant $c$ small enough, then the classical
Navier-Stokes system (\textit{NS}) is globally wellposed. Refer to
\cite{Cannone,Chemin,Chemin07-0,Koch01} for a recent panorama.

In this paper, using the method in \cite{Danchin2010,Chen}
studying the compressible Navier--Stokes system, we are concerned with
the existence and uniqueness of a solution for the initial data in a
functional space with minimal regularity order $v_0\in\dot{B}^{-1+\frac{N}{p}}_{p,1}$. Although
 the equation (\ref{vis2-E1.1})$_3$ for the deformation tensor $F$ is identical to the equation for
the vorticity $\omega=\nabla\times v$ of the Euler equations
    $$
    \partial_t \omega+  v\cdot \nabla \omega=\nabla v\omega,
    $$
our result implies that the deformation tensor $F$ has the same
regularity as the density of the compressible Navier--Stokes
equations.

 At this stage, we will use scaling considerations for
 (\ref{vis2-E1.1}) to guess which spaces may be critical. We observe
  that  (\ref{vis2-E1.1}) is invariant by the transformation
    $$
    (v_0(x), F_0(x))\rightarrow (lv_0(lx),F_0(lx)),
    $$
        $$
(v(t,x), F(t,x),P(t,x))\rightarrow
(lv(l^2t,lx),F(l^2t,lx),l^2P(l^2t,lx)),
        $$
up to a change of the elastic energy functional $W$ into $l^2W$.

\begin{defn}\label{vis2-D1.1}
  A functional space $E\subset (\mathcal{S}'(\mathbb{R}^N))^N\times (
  \mathcal{S}'(\mathbb{R}^N)^{N\times N}$ is called
a critical space if the associated norm is invariant under the
transformation $
    (v(x), F(x))\rightarrow (lv(lx),F(lx))
    $ (up to a constant
independent of $l$).
\end{defn}

In this paper,
$(\dot{B}^{\frac{N}{p}-1}_{p,1})^N\times(\dot{B}^{\frac{N}{p}}_{p,1})^{N\times N}$, $p\in[2,\infty)$, is a
critical space.

Assume that $E_0:=F_0-I$ and $v_0$ satisfy the
following constraints:
\begin{equation}
    \nabla \cdot v_0=0,
     \  \mathrm{det}(I+E_0)=1,
   \    \nabla\cdot E^\top_0=0,\label{vis2-E1.3-0}
       \end{equation}
and
\begin{equation}
       \partial_m E_{0ij}-\partial_jE_{0im}
       =E_{0lj}\partial_lE_{0im}-E_{0lm}\partial_lE_{0ij}.
  \label{vis2-E1.3}
\end{equation}
The first three of these expressions are just the consequences of
the incompressibility condition  and the last one can be understood
as the consistency condition for changing variables between the
Lagrangian and Eulerian coordinates \cite{Lei}.

For simplicity, we only consider the case of Hookean elastic
materials: $W(F)=|F^2|$. Define the usual strain tensor by the form
    \begin{equation}
      E=F-I.
    \end{equation}
Then, the system (\ref{vis2-E1.1}) is
\begin{equation}
  \left\{\begin{array}{l}
    \nabla \cdot v=0, \ \ x\in \mathbb{R}^N,\ N\geq 2,\\
        v_{it}+v\cdot \nabla v_i+\partial_i p=\mu\Delta v_i+ E_{jk}\partial_j E_{ik}+\partial_jE_{ij},\\
            E_t+v\cdot \nabla E=\nabla vE+\nabla v,\\
                (v,E)(0,x)=(v_0,E_0)(x).
  \end{array}
  \right.\label{vis2-E1.4}
\end{equation}

The global well-poesdness result had been obtained with the small initial data
     $v_0\in   (\dot{B}_{2,1}^{\frac{N}{2}-1})^N$ and $E_0\in (\dot{B}_{2,1}^\frac{N}{2}\cap \dot{B}_{2,1}^{\frac{N}{2}-1})^{N\times N}$ independently in \cite{Qian,zhang10}.
\begin{thm}[\cite{Qian,zhang10}]\label{vis2-T1.1}
    Suppose that the
   initial data satisfies the incompressible constraints (\ref{vis2-E1.3-0}),  $v_0\in
    (\dot{B}_{2,1}^{\frac{N}{2}-1})^N$ and
 $E_0\in (\dot{B}_{2,1}^\frac{N}{2})^{N\times N}$.
    Then there exist $T>0$ and a unique local solution for system (\ref{vis2-E1.4}) that satisfies
        $$
  (v,E)\in  U^\frac{N}{2}_T,
        $$
    \begin{equation}
      \|(v,E)\|_{U^\frac{N}{2}_T}\leq C(\|E_0\|_{
      \dot{B}_{2,1}^\frac{N}{2}}+\|v_0\|_{\dot{B}_{2,1}^{\frac{N}{2}-1}}),
    \end{equation}
  and
    \begin{equation}
       \nabla \cdot v =0,
     \  \mathrm{det}(I+E )=1,
   \    \nabla\cdot E^\top =0,
    \end{equation}
where
$$
    U^s_T=\left(L^1([0,T]; \dot{B}_{2,1}^{s+1})\cap C([0,T];   \dot{B}_{2,1}^{s-1})
    \right)^N\times \left( C([0,T]; \dot{B}_{2,1}^s)
    \right)^{N\times N},$$

 Furthermore,  suppose that the
   initial data satisfies the incompressible constraints
   (\ref{vis2-E1.3-0})--(\ref{vis2-E1.3}), $v_0\in   (\dot{B}_{2,1}^{\frac{N}{2}-1})^N$,
    $E_0\in (\dot{B}_{2,1}^\frac{N}{2}\cap \dot{B}_{2,1}^{\frac{N}{2}-1})^{N\times N}$
     and
        \begin{equation}
          \|E_0\|_{\dot{B}_{2,1}^\frac{N}{2}\cap
          \dot{B}_{2,1}^{\frac{N}{2}-1}}+\|v_0\|_{\dot{B}_{2,1}^{\frac{N}{2}-1}}\leq \lambda,
        \end{equation}
        where $\lambda$ is a small positive constant.
Then there
  exists a unique global   solution for system (\ref{vis2-E1.4}) that satisfies
    $$
(v,E)\in{V^\frac{N}{2}}
    $$
    \begin{equation}
      \|(v,E)\|_{V^\frac{N}{2}}\leq C(\|E_0\|_{B^\frac{N}{2}\cap
      \dot{B}_{2,1}^{\frac{N}{2}-1}}+\|v_0\|_{\dot{B}_{2,1}^{\frac{N}{2}-1}}),
    \end{equation}
    where
$$
    V^s= \left(L^1(\mathbb{R}^+; \dot{B}_{2,1}^{s+1})\cap C(\mathbb{R}^+;   \dot{B}_{2,1}^{s-1})
    \right)^N\times \left(L^2(\mathbb{R}^+; \dot{B}_{2,1}^s)\cap C(\mathbb{R}^+;
    \dot{B}_{2,1}^s\cap \dot{B}_{2,1}^{s-1})
    \right)^{N\times N}.$$
\end{thm}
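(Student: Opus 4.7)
My plan is to treat the local and global statements separately: the former follows from a standard Friedrichs-type iteration in the critical Chemin--Lerner space, while the latter requires uncovering a damped wave structure in the linearization that supplies the missing dissipation for $E$.

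For the local statement I would set up an iterative scheme defining $(v^{n+1},E^{n+1})$ from $(v^n,E^n)$ by solving a linear Stokes system for $v^{n+1}$ with right-hand side $-v^n\cdot\nabla v^n+\partial_j(E^n_{jk}E^n_{ik})+\nabla\cdot E^n$ together with the linear transport equation $\partial_t E^{n+1}+v^n\cdot\nabla E^{n+1}=\nabla v^n E^n+\nabla v^n$. The a priori bounds rest on three standard ingredients at this level of regularity: the maximal-regularity estimate for Stokes controlling $v^{n+1}$ in $L^\infty_T(\dot B_{2,1}^{N/2-1})\cap L^1_T(\dot B_{2,1}^{N/2+1})$; the Bahouri--Chemin--Danchin transport estimate in $\dot B_{2,1}^{N/2}$ with an exponential factor governed by $\int_0^T\|\nabla v^n\|_{\dot B_{2,1}^{N/2}}\,dt$; and the product inequality $\|fg\|_{\dot B_{2,1}^{N/2}}\lesssim\|f\|_{\dot B_{2,1}^{N/2}}\|g\|_{\dot B_{2,1}^{N/2}}$, applicable both to the quadratic elastic term and to $\nabla v\cdot E$. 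Uniform bounds on a small time interval, followed by a contraction argument on a possibly shorter interval, yield existence; uniqueness and time-continuity follow from the same estimates. The structural constraints (\ref{vis2-E1.3-0})--(\ref{vis2-E1.3}) are preserved because each satisfies a homogeneous transport-type equation with coefficients built from the solution, so vanishing initial values propagate in time.

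For the global statement the decisive observation is that the linearization of (\ref{vis2-E1.4}) about $(0,0)$ exhibits a damped wave structure in the pair $(v,\mathcal P\nabla\cdot E)$. Taking the row-divergence of the $E$-equation yields $\partial_t(\nabla\cdot E)=\Delta v+\{\text{quadratic}\}$, while the Leray projection of the momentum equation reads $\partial_t v-\mu\Delta v=\mathcal P\nabla\cdot E+\{\text{quadratic}\}$. The corresponding characteristic polynomial $\lambda^2+\mu|\xi|^2\lambda+|\xi|^2=0$ has both roots with strictly negative real part for every $\xi\neq 0$, delivering simultaneous dissipation on $v$ and $\mathcal P\nabla\cdot E$. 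Localizing via Littlewood--Paley blocks $\dot\Delta_j$ and running an energy estimate on the hybrid functional
\begin{equation*}
\|\dot\Delta_j v\|_{L^2}^2+\|\dot\Delta_j E\|_{L^2}^2+\varepsilon_j\langle\dot\Delta_j v,\dot\Delta_j\mathcal P\nabla\cdot E\rangle,
\end{equation*}
with $\varepsilon_j$ tuned to the frequency, produces dissipation of order $2^{2j}$ on both $\dot\Delta_j v$ and $\dot\Delta_j E$ at low frequencies and the usual parabolic-plus-exponential splitting at high frequencies. Summing with the weights $2^{j(N/2-1)}$ on $v$ and $2^{jN/2}$ on $E$, and an additional $2^{j(N/2-1)}$ piece on $E$ at low frequencies, yields the $V^{N/2}$-bound for the linearized problem.

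The main obstacle is then closing the nonlinear estimates globally in time. The extra assumption $E_0\in\dot B_{2,1}^{N/2-1}$ is indispensable here because the damped-wave dissipation on $E$ acts at precisely that regularity index, and without it the $L^2$-in-time norm of $E$ featured in $V^{N/2}$ is not scale-invariant. Once the standard product estimates
\begin{equation*}
\|fg\|_{\dot B_{2,1}^s}\lesssim \|f\|_{\dot B_{2,1}^{N/2}}\|g\|_{\dot B_{2,1}^s},\qquad s\in\{N/2-1,N/2\},
\end{equation*}
are applied blockwise to $v\cdot\nabla v$, $v\cdot\nabla E$, $\nabla v\,E$ and $E\cdot\nabla E$, each nonlinear contribution is controlled by $C\|(v,E)\|_{V^{N/2}}^2$. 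Choosing the smallness parameter $\lambda$ so that these quadratic remainders are absorbed into the linear dissipation, a standard continuation argument then extends the local solution to all of $\mathbb R^+$ with the stated bound.
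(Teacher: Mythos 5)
Your overall skeleton is the right one, and it coincides with the machinery this paper actually develops. (Note that the paper never proves Theorem \ref{vis2-T1.1} itself --- it is quoted from \cite{Qian,zhang10}; the relevant comparison is with Sections \ref{vis2-S4}--\ref{vis2-s5}, where the same ideas are run in the more general $L^p$ setting for Theorem \ref{vis2-T1.2}.) In particular, your change of unknown to $\mathcal{P}\nabla\cdot E$ is exactly the paper's $c=\Lambda^{-1}\nabla\cdot E$, your characteristic polynomial $\lambda^2+\mu|\xi|^2\lambda+|\xi|^2=0$ reproduces the eigenvalues $\lambda_\pm=-\tfrac12\mu|\xi|^2\pm\tfrac12\sqrt{\mu^2|\xi|^4-4|\xi|^2}$ of the Green-matrix lemma in Section \ref{vis2-S4}, and your low-/high-frequency accounting (parabolic decay of both components at low frequencies, exponential damping of $c$ and parabolic smoothing of $v$ at high frequencies) correctly explains why $E_0$ must lie in $\dot B^{N/2}_{2,1}\cap\dot B^{N/2-1}_{2,1}$ and why the $L^2$-in-time bound on $E$ is available. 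The local-existence outline is also standard and sound.

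The genuine gap is in your global nonlinear closure. You propose to treat the convection term $v\cdot\nabla E$ perturbatively, ``blockwise'' via the product law $\|fg\|_{\dot B^s_{2,1}}\lesssim\|f\|_{\dot B^{N/2}_{2,1}}\|g\|_{\dot B^s_{2,1}}$, and to bound it by $C\|(v,E)\|_{V^{N/2}}^2$. This cannot work for the high-frequency part of $E$: the component $E^h$ (equivalently $c^h$) lives at regularity $N/2$ with only exponential damping and no smoothing, so its source terms must be estimated in $L^1_t(\dot B^{N/2}_{2,1})$; but in the paraproduct piece $T_v\nabla E$ the derivative falls entirely on the high frequencies of $E$, so any bound of $\|T_v\nabla E\|_{\dot B^{N/2}_{2,1}}$ requires $\|E\|_{\dot B^{N/2+1}_{2,1}}$, which the $V^{N/2}$ norm does not control --- indeed your displayed product law with $s=N/2$ applied to $v\cdot\nabla E$ literally invokes $\|\nabla E\|_{\dot B^{N/2}_{2,1}}$. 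This one-derivative loss on the hyperbolic component is precisely why the paper's linear model (\ref{vis2-E4.1}) keeps $T_u\cdot\nabla v$ and $T_u\cdot\nabla c$ on the \emph{left-hand side}: in Proposition \ref{vis2-P4.3} the transport part is absorbed by energy/commutator arguments (skew-symmetry from $\mathrm{div}\,u=0$ kills the top-order contribution), at the price of a factor $e^{C\widetilde U(t)}$ with $\widetilde U(t)=\int_0^t\|\nabla u\|_{L^\infty}\,d\tau$, which stays bounded because $\nabla v\in L^1(\mathbb{R}^+;L^\infty)$ follows from the parabolic part of the norm and smallness; only the remaining pieces ($T'_{\nabla v_i}v$, $T'_c v$, $\nabla v\,E$, $E\nabla E$, and commutators) are put on the right and estimated by product laws, as in the definitions of $G$ and $L$ in (\ref{vis2-E5.5}) and the estimates (\ref{vis2-E5.14})--(\ref{vis2-E5.27}). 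Your hybrid cross-term energy functional could in fact absorb the convection term if you ran it on the frequency-localized equations \emph{with} $v\cdot\nabla$ included, using the commutator $[\Delta_j,v\cdot\nabla]$ and Gronwall; but as written --- linear analysis about $(0,0)$ plus purely perturbative product estimates for all quadratic terms --- the high-frequency estimate for $E$ does not close.
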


In this paper, we will obtain the following theorem, where
    $$
    f^l=\sum_{2^k\leq R_0} \Delta_k f,
        f^h=\sum_{2^k> R_0} \Delta_k f,
    $$
    and $R_0$ is as in Proposition \ref{vis2-P4.2}.
\begin{thm}\label{vis2-T1.2}
Suppose that the
   initial data satisfies the incompressible constraints
   (\ref{vis2-E1.3-0})--(\ref{vis2-E1.3}),
    $E_0\in \dot{B}_{p_1,1}^\frac{N}{p_1}$, $E_0^l\in\dot{B}_{2,r}^{s}$,
    $E_0^h\in\dot{B}_{2,r}^{s+1}$,
    $v_0\in   \dot{B}_{p_1,1}^{\frac{N}{p_1}-1}\cap \dot{B}^s_{2,r}$,
 for some $p_1\in [2,\infty)$, $p_2\in[p_1,\infty)$, $r\in[1,\infty]$ and
 $s\in\mathbb{R}$ such that
    \begin{equation}
      s\in\left\{
      \begin{array}{ll}
    (-1,\frac{N}{2}-1)&\textrm{ if }r>1,\\
        (-1,\frac{N}{2}-1]&\textrm{ if }r=1.
      \end{array}
      \right.
    \end{equation}
There exists a constant  $\lambda$   depending only on $N$, $p_2$
and $s$ such that if
        \begin{equation}
  \|E_0^l\|_{\dot{B}_{2,r}^s}+
  \|E_0^h\|_{\dot{B}_{2,r}^{s+1}\cap \dot{B}_{p_2,1}^{\frac{N}{p_2}}}
  +\|v_0\|_{\dot{B}_{2,r}^s}+\|v_0^h\|_{\dot{B}_{p_2,1}^{\frac{N}{p_2}-1}}
  \leq \lambda,
        \end{equation}
then there
  exists a global solution for system (\ref{vis2-E1.4}) that satisfies
    $$
(v,E)\in{V^s_{p_1,r}}
    $$
    where
\begin{eqnarray*}
    V^s_{p,r}&=&
   \left\{ v\in \left(\widetilde{C}(\mathbb{R}^+;   \dot{B}_{2,r}^{s}\cap  \dot{B}_{p,1}^{\frac{N}{p}-1})
    \cap \widetilde{L}^1(\mathbb{R}^+; \dot{B}_{2,r}^{s+2}\cap \dot{B}_{p,1}^{\frac{N}{p}+1})
    \right)^N\right.\\
 &&    E\in\left(\widetilde{C}(\mathbb{R}^+; \dot{B}_{2,r}^{s+1} \cap
   \dot{B}_{p,1}^\frac{N}{p})
 \cap L^2(\mathbb{R}^+; \dot{B}^\frac{N}{p}_{p,1})\right)^{N\times N},
   \\
 && \left.
E^l\in \left(\widetilde{L}^2(\mathbb{R}^+; \dot{B}_{2,r}^{s+1})\cap
  \widetilde{L}^1(\mathbb{R}^+; \dot{B}^{s+2}_{2,r})\right)^{N\times N}, E^h\in \left(
  \widetilde{L}^1(\mathbb{R}^+; \dot{B}^{s+1}_{2,r})
    \right)^{N\times N}
    \right\}.
    \end{eqnarray*}
 Furthermore, the solution is unique in $ V^s_{p_1,r}$ if $p_1\leq
 2N$.
\end{thm}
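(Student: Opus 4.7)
The plan is to adapt the hybrid $L^p$ critical framework of Danchin--Chen for the compressible Navier--Stokes system to the viscoelastic problem (\ref{vis2-E1.4}), viewing $E$ as the analogue of the density perturbation. After applying the Leray projector $\mathbb{P}$ to eliminate the pressure, the linearization of (\ref{vis2-E1.4}) around $(v,E)=(0,0)$ reads $v_t-\mu\Delta v=\mathbb{P}\,\mathrm{div}\,E$, $E_t=\nabla v$, whose $2\times 2$ Fourier symbol is precisely of compressible Navier--Stokes type. This dictates the hybrid structure of $V^s_{p_1,r}$: at low frequencies, both components enjoy full parabolic smoothing in $L^2$-based Besov norms, whereas at high frequencies $v^h$ gains two derivatives in $L^1_t$ but $E^h$ only inherits a zero-order damping, which forces the $L^p$-based measurement of $E^h$ and the absence of a derivative gain on it.

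The first step is to establish linearized maximal-regularity estimates matching the trajectory norm of $V^s_{p_1,r}$. Localizing by $\Delta_k$, this is done either by diagonalizing the Fourier symbol frequency-by-frequency, or equivalently by working with Lei's effective unknown $w=\Delta v+\mu^{-1}\nabla\cdot E$ from \cite{Lei}, which decouples the weak damping acting on $E$. The threshold $R_0$ appearing in the statement is precisely the frequency at which the two regimes are separated. The second step is the nonlinear analysis: one bounds the source terms $v\cdot\nabla v$, $\mathrm{div}(EE^\top)$ on the velocity side and $v\cdot\nabla E$, $\nabla v\,E$ on the deformation side by Bony's paraproduct/remainder decomposition, always placing one factor in an $L^1_t$- or $L^2_t$-integrable slot and the other in an $L^\infty_t$-slot of $V^s_{p_1,r}$. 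The cross low--high products, where an $L^2$-based factor meets an $L^{p_1}$-based factor, are handled through embeddings between the two Besov scales and are where the two-index setting $p_2\geq p_1$ of the statement is used.

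Global existence is then obtained by a standard iterative Friedrichs-type scheme: the frequency-truncated system is uniformly bounded in $V^s_{p_1,r}$ via Steps 1--2 together with the smallness of the data, and the limit is recovered by weak-$*$ compactness at high frequencies and strong compactness on compact sets at low frequencies. For uniqueness under $p_1\leq 2N$, a direct contraction in $V^s_{p_1,r}$ fails because the relevant products are not Lipschitz at the critical regularity; instead, one estimates the difference $(\delta v,\delta E)$ of two solutions in a space one derivative below, and closes a Gronwall loop by using $\dot{B}^{N/p_1-1}_{p_1,1}\hookrightarrow L^N$-type embeddings to control the convective transport of $\delta E$---this is exactly the place where the bound $p_1\leq 2N$ becomes essential.

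The principal obstacle is the high-frequency estimate of $\mathrm{div}(EE^\top)$ in the velocity equation: because $E^h$ carries no derivative gain, placing this quadratic source into $L^1_t\dot{B}^{N/p_1-1}_{p_1,1}$ forces one to always spend the $L^2(\mathbb{R}^+;\dot{B}^{N/p}_{p,1})$ integrability on $E$ that is built into $V^s_{p_1,r}$ via Lei's weak-dissipation identity, one factor in $L^2_t$ and the other in $L^\infty_t\dot{B}^{N/p}_{p,1}$. Handling this term cleanly, together with the careful bookkeeping of cross low--high paraproducts at the critical indices displayed in the statement (in particular the endpoint case $r=1$, $s=N/2-1$), is where the heart of the proof lies.
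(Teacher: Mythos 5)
Your overall architecture (hybrid $L^2$/$L^p$ Besov framework adapted from the compressible Navier--Stokes theory, frequency-localized linear estimates, Bony decomposition for the nonlinearities, compactness, uniqueness one derivative lower) matches the paper's, but there is a genuine gap at the single most delicate point: you list $v\cdot\nabla E$ among the ``source terms'' to be bounded by ``always placing one factor in an $L^1_t$- or $L^2_t$-integrable slot and the other in an $L^\infty_t$-slot of $V^s_{p_1,r}$.'' This cannot work. In the Bony splitting $v\cdot\nabla E=T_{v_j}\partial_jE+T_{\partial_jE}v_j+\partial_jR(v_j,E)$ the last two pieces are indeed harmless, but the paraproduct $T_{v}\cdot\nabla E$ loses exactly one derivative on $E$: the product laws give at best $\|T_{v}\cdot\nabla E\|_{\dot{B}^{N/p}_{p,1}}\lesssim\|v\|_{L^\infty}\|E\|_{\dot{B}^{N/p+1}_{p,1}}$ (and $\|T_{v}\cdot\nabla E\|_{\dot{B}^{s+1}_{2,r}}\lesssim\|v\|_{L^\infty}\|E\|_{\dot{B}^{s+2}_{2,r}}$ on the other scale), and no norm in $V^s_{p,r}$ controls $E^h$ with any derivative gain --- that absence of gain is, as you yourself note, forced by the linear theory, since $E^h$ only inherits zero-order damping. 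Because the $E$-equation is of pure transport type with no smoothing, the loss cannot be recovered by interpolation in time either, so your Step 2 does not close.

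This is precisely why the paper does not treat the convection perturbatively. It keeps the paraproduct part of the convection inside the linear system: Proposition \ref{vis2-P4.3} estimates the system with $T_u\cdot\nabla$ on the left-hand side, paying only a Gronwall factor $e^{C\widetilde{U}(t)}$ with $\widetilde{U}(t)=\int_0^t\|\nabla u\|_{L^\infty}d\tau$, and this yields the $L^2$-scale bound $Y_s$ (Lemma \ref{vis2-L5.1}). For the $L^p$-scale high-frequency bound $Z_p$ (Lemma \ref{vis2-L5.2}), where energy arguments are unavailable, the paper applies $\Delta_q$, performs a Lagrangian change of variables along the flow $\psi_q$ of $S_{q-1}v$ so that the convection disappears, and only then invokes the Green-matrix decay estimates (Proposition \ref{vis2-P4.2}, Lemma \ref{vis2-L4.1}); the price is a family of error terms $R^1_q,R^2_q,R^3_q$ controlled through flow estimates and Vishik's trick. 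Your proposal contains no substitute for either device, so the a priori estimate --- and with it the whole scheme --- breaks down at the convection term. (Secondary, non-fatal differences: the paper constructs global solutions by smoothing the data and invoking the local theory of Theorem \ref{vis2-T1.1} plus a continuation argument rather than a Friedrichs scheme, and obtains uniqueness for $p_1\le 2N$ by citing Qian's local uniqueness theorem rather than by a direct difference estimate; those choices are interchangeable, the convection issue is not.)
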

\begin{rem}
This result allows us to construct the unique global solution for the high oscillating initial velocity $v_0$. For example,
    $$
    v_0(x)=\varepsilon^{\frac{N}{p}-1}\sin(\frac{x_1}{\varepsilon})\phi(x), \ \phi(x)\in \mathcal{S}(\mathbb{R}^N),\ p\in[N,\infty),
    $$
which satisfies
    $$
    \|v_0\|_{\dot{B}^s_{2,r}}+\|v_0^h\|_{\dot{B}^{\frac{N}{p_2}-1}_{p_2,1}}\ll1,
    $$
    if  $s<-1+\frac{N}{p}$, $p_2>p$, $\varepsilon$ is small enough, (Proposition 2.9, \cite{Chen}).
\end{rem}
\begin{rem}
  The $L^2$-decay in time for $E$ is a key point in the proof of
the global existence. We shall also get a $L^1-$decay in a space $E^l\in \left(
  \widetilde{L}^1(\mathbb{R}^+; \dot{B}^{s+2}_{2,r})\right)^{N\times N}, E^h\in \left(
  \widetilde{L}^1(\mathbb{R}^+; \dot{B}^{s+1}_{2,r})
    \right)^{N\times N}$.
\end{rem}

\begin{rem}\label{vis2-r1.2}
  Theorem \ref{vis2-T1.2} implies that the deformation tensor $F$ has similar
  property as the density of the compressible Navier--Stokes system
  \cite{Danchin2010,Chen}. And we think that the
incompressible viscoelastic system is similar to the  compressible
Navier--Stokes system.
\end{rem}

\begin{rem}
Similar to the  compressible
  Navier--Stokes system \cite{Danchin2010,Chen},  the initial data do not
really belong to a critical space in the sense of Definition
\ref{vis2-D1.1}. We indeed made the additional assumptions $E_0^l\in
\dot{B}^{s}_{2,r}$, $E^h_0\in \dot{B}^{s+1}_{2,r}$, $v_0\in \dot{B}^{s}_{2,r}$. On the
other hand, our scaling considerations do not take care of the
  Cauchy--Green tensor term. A careful study of the linearized system (see
 Proposition
\ref{vis2-P4.2} below) besides indicates that such an assumption may
be unavoidable.
\end{rem}
\begin{rem}
Considering the general viscoelastic model (\ref{vis2-E1.1}), if the
strain energy function satisfies the strong Legendre--Hadamard
ellipticity condition
    \begin{equation}
    \frac{\partial^2W(I)}{\partial F_{il}\partial F_{jm}}=(\alpha^2-2\beta^2)\delta_{il}\delta_{jm}+\beta^2(\delta_{im}\delta_{jl}
    +\delta_{ij}\delta_{lm}),
     \textrm{ with }\alpha>\beta>0,
    \end{equation}
 and the reference configuration stress--free condition
    \begin{equation}
    \frac{\partial W(I)}{\partial F}=0,
    \end{equation}
then we can obtain the same results as that in Theorem \ref{vis2-T1.2}.
\end{rem}

In this paper, we introduce the following function:
    $$
    c=\Lambda^{-1}{ \nabla\cdot E},
    $$ where $\Lambda^{s} f=  \mathcal{F}^{-1} (|\xi|^s \hat{f})$.
Then, the system (\ref{vis2-E1.4}) reads
    \begin{equation}
  \left\{\begin{array}{l}
    \nabla \cdot v=\nabla \cdot c=0, \ \ x\in \mathbb{R}^N,\ N\geq2,\\
        v_{it}+v\cdot \nabla v_i+\partial_i p=\mu\Delta v_i+ E_{jk}\partial_j E_{ik}+\Lambda c_i,\\
            c_t+v\cdot \nabla c +[\Lambda^{-1}\nabla\cdot, v\cdot ]\nabla E=\Lambda^{-1} \nabla\cdot(\nabla vE)-\Lambda v,\\
                     \Delta E_{ij}= \Lambda\partial_j c_i+\partial_k(\partial_k E_{ij}-\partial_jE_{ik }),\\
                (v,c)(0,x)=(v_0,\Lambda^{-1}\nabla\cdot E_0)(x).
  \end{array}
  \right.\label{vis2-E1.14}
\end{equation}
So, we need to study the following mixed parabolic--hyperbolic
linear system with a convection term:
    \begin{equation}
  \left\{\begin{array}{l}
    \nabla \cdot v=\nabla \cdot c=0, \ \ x\in \mathbb{R}^N,\ N\geq2,\\
                               v_{it}+u\cdot \nabla v_i+\partial_i p-\mu\Delta v_i-\Lambda c_i=G,\\
     c_t+u\cdot \nabla c+\Lambda v =L,\\
                (v,c)(0,x)=(v_0,c_0)(x),
  \end{array}
  \right.\label{vis2-E1.15}
\end{equation}
where div$u=0$ and $c_0=\Lambda^{-1}\nabla\cdot E_0$. This system is
similar to the system
    \begin{equation}
      \left\{
      \begin{array}{l}
    c_t+v\cdot\nabla c+\Lambda d=F,\\
        d_t+v\cdot\nabla d-\bar{\mu}\Delta d-\Lambda c=G,
      \end{array}
      \right.
    \end{equation}
for compressible Navier-Stokes system  \cite{Danchin2010,Chen}. Using the similar
method studying the compressible Navier--Stokes system  in
 \cite{Danchin2010,Chen}, we can obtain some important estimates for the system (\ref{vis2-E1.15}).

As for the related studies on the existence of solutions to
nonlinear elastic systems, there are works by Sideris \cite{Sideris}
and Agemi \cite{Agemi} on the global existence of classical small
solutions to three--dimensional compressible elasticity, under the
assumption that the nonlinear terms satisfy the null conditions. The
global existence for three--dimensional incompressible elasticity
was then proved via the incompressible limit method in
\cite{Sideris2004} and  by a different method in \cite{Sideris2007}. It
is worth noticing that the  global existence and uniqueness for the
corresponding two--dimensional problem is still open.

As for the density-dependent incompressible viscoelastic fluids, Hu and Wang \cite{Hu}
obtained the existence and uniqueness of the global strong solution with small initial
data in $\mathbb{R}^3$.

The rest of this paper is organized as follows. In Section
\ref{vis2-S2}, we state  three lemmas describing the intrinsic
properties of viscoelastic system. In Section \ref{vis2-S3}, we
present the functional tool box: Littlewood--Paley decomposition,
product laws in Sobolev and hybrid Besov spaces. The next section is
devoted to the study of some linear models associated to
(\ref{vis2-E1.4}). In Section \ref{vis2-s5}, we give some a priori estimates.
 At last,
we will study  the
global well-posedness for (\ref{vis2-E1.4}).

\section{Basic mechanics of viscoelasticity}\label{vis2-S2}
Using the similar arguments as that in the proof of Lemmas 1--3 in
\cite{Lei}, we can easily obtain the following three lemmas.

\begin{lem}
  Assume that $\mathrm{det}(I+E_0)=1$ is satisfied and $(v,F)$ is the solution of system (\ref{vis2-E1.4}). Then the following is always true:
  \begin{equation}
    \mathrm{det}(I+E)=1,
  \end{equation}
for all time $t\geq0$, where the usual strain tensor $E=F-I$.
\end{lem}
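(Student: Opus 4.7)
The plan is to propagate the determinant condition along the characteristics of the velocity field. Let $\phi(t,x)$ be the flow map of $v$, defined by
\begin{equation*}
\partial_t\phi(t,x)=v(t,\phi(t,x)),\qquad \phi(0,x)=x.
\end{equation*}
Since $\nabla\cdot v=0$, the map $\phi(t,\cdot)$ is a volume-preserving diffeomorphism of $\mathbb{R}^N$. Along these trajectories, the transport equation $F_t+v\cdot\nabla F=\nabla v\,F$ (which is equivalent to the third equation of (\ref{vis2-E1.4}) rewritten for $F=I+E$) becomes the linear matrix-valued ODE
\begin{equation*}
\tfrac{d}{dt}F(t,\phi(t,x))=\nabla v(t,\phi(t,x))\,F(t,\phi(t,x)),\qquad F(0,x)=I+E_0(x).
\end{equation*}

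Next I would set $J(t,x):=\det F(t,\phi(t,x))$. By hypothesis $J(0,x)=\det(I+E_0(x))=1$, so $F(0,x)$ is invertible; the linear ODE above preserves invertibility on any time interval where $\nabla v$ is integrable in time, which is the case in our functional framework. Jacobi's formula then gives
\begin{equation*}
\tfrac{d}{dt}J=J\cdot\mathrm{tr}\bigl(F^{-1}\tfrac{d}{dt}F\bigr)=J\cdot\mathrm{tr}\bigl(F^{-1}\nabla v\,F\bigr)=J\cdot\mathrm{tr}(\nabla v)=J\cdot(\nabla\cdot v)=0
\end{equation*}
by incompressibility. Hence $J(t,x)\equiv 1$ for all $t\geq0$ and all $x\in\mathbb{R}^N$. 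Because $\phi(t,\cdot)$ is a bijection, every $y\in\mathbb{R}^N$ can be written as $y=\phi(t,x)$, so $\det F(t,y)=\det(I+E(t,y))=1$ throughout, as required.

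The argument is essentially bookkeeping once one recognizes the right identity; the only subtlety is that the manipulation requires the velocity and deformation tensor to be regular enough along characteristics for the flow $\phi$ to be well defined and for $F$ to stay invertible. In the present setting the solution $(v,F)$ already lies in a function class (the spaces built in Theorem \ref{vis2-T1.1} and Theorem \ref{vis2-T1.2}, with $\nabla v\in L^1_t(L^\infty_x)$ via embedding of the Besov scale) that makes the flow well defined and the ODE calculation above rigorous, so no genuine obstacle arises. Alternatively one can avoid invoking $F^{-1}$ by using the cofactor form $\tfrac{d}{dt}\det F=\mathrm{tr}(\mathrm{cof}(F)\,\dot F)$ and checking $\mathrm{tr}(\mathrm{cof}(F)\nabla v\,F)=(\det F)(\nabla\cdot v)$, which yields the same conclusion without any invertibility assumption on $F$.
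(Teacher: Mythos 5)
Your proof is correct and takes essentially the same approach as the paper: the paper gives no argument of its own, deferring to Lemma 1 of \cite{Lei}, whose proof is precisely this Jacobi-formula computation, written in Eulerian form as $(\partial_t+v\cdot\nabla)\det F=\det F\,\mathrm{tr}(F^{-1}\nabla v\,F)=\det F\,(\nabla\cdot v)=0$, of which your flow-map version is just the method of characteristics. Your closing cofactor remark, $\tfrac{d}{dt}\det F=\mathrm{cof}(F)_{ij}\,\partial_t F_{ij}$ together with $F_{kj}\,\mathrm{cof}(F)_{ij}=\delta_{ki}\det F$, is a nice touch that removes the only caveat (invertibility of $F$) from the argument.
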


\begin{lem}\label{vis2-L2.2}
  Assume that $\nabla\cdot E^\top_0=0$ is satisfied, then the solution $(v,F)$  of system (\ref{vis2-E1.4}) satisfies the following identities:
  \begin{equation}
    \nabla \cdot F^\top=0, \ \textrm{ and }\ \nabla\cdot E^\top=0,
  \end{equation}
for all time $t\geq0$.
\end{lem}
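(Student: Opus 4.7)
The plan is to derive a pure transport equation for $H_i := \partial_j F_{ji}$, so that $H = \nabla\cdot F^{\top}$, and then invoke uniqueness. The hypothesis $\nabla\cdot E_0^{\top} = 0$ gives $H(0,\cdot) = \nabla\cdot F_0^{\top} = \nabla\cdot(I + E_0)^{\top} = 0$, and once $H \equiv 0$ is established the second identity $\nabla\cdot E^{\top} = \nabla\cdot F^{\top} - \nabla\cdot I^{\top} \equiv 0$ is automatic.

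First, I would write the $F$-equation componentwise, swapping the roles of the indices so that the divergence reads naturally: $\partial_t F_{ji} + v_k\partial_k F_{ji} = (\partial_k v_j) F_{ki}$. Applying $\partial_j$ and collecting terms yields
\[
\partial_t H_i + v_k\partial_k H_i + (\partial_j v_k)(\partial_k F_{ji}) = (\partial_j\partial_k v_j) F_{ki} + (\partial_k v_j)(\partial_j F_{ki}).
\]
The first term on the right vanishes because $\nabla\cdot v = 0$ forces $\partial_j\partial_k v_j = \partial_k(\partial_j v_j) = 0$. Relabeling the dummy indices $j\leftrightarrow k$ in the second term on the right rewrites it as $(\partial_j v_k)(\partial_k F_{ji})$, which cancels exactly with the cross term on the left. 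What remains is the homogeneous transport equation $\partial_t H + v\cdot\nabla H = 0$.

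Since $H(0,\cdot)\equiv 0$ and $v$ is divergence-free with the regularity provided by the local solution framework of Theorem~\ref{vis2-T1.1} (which in particular embeds $v$ into $L^1_{\mathrm{loc}}(\mathbb{R}^+;\mathrm{Lip})$), the standard uniqueness result for the linear transport equation yields $H \equiv 0$ for all $t \geq 0$. Hence $\nabla\cdot F^{\top} = 0$ and consequently $\nabla\cdot E^{\top} = 0$ throughout the lifetime of the solution.

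I do not expect any genuine obstacle. The argument is an index manipulation combined with transport uniqueness, and the key cancellation is the same mechanism by which the constraint $\nabla\cdot F^{\top}=0$ is classically propagated by the kinematic equation for $F$; this is exactly the strategy used in Lemmas~1--3 of \cite{Lei} to which the authors refer. The only point requiring a moment's thought is the regularity needed for transport uniqueness, which is built into the Besov-type spaces in the statement of Theorem~\ref{vis2-T1.1}.
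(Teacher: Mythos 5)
Your proof is correct and is essentially the paper's own argument: the paper simply defers to Lemmas 1--3 of \cite{Lei}, where the propagation of $\nabla\cdot F^\top=0$ is established by exactly this device --- applying $\partial_j$ to the (transposed) equation for $F$, using $\nabla\cdot v=0$ and a dummy-index relabeling to cancel the cross terms, and concluding from uniqueness for the resulting homogeneous transport equation with Lipschitz-in-space velocity. Your index computation and the regularity remark (the local solution gives $v\in L^1_{\mathrm{loc}}(\mathbb{R}^+;\mathrm{Lip})$ via $\dot{B}^{N/2+1}_{2,1}\hookrightarrow \mathrm{Lip}$) both check out.
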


\begin{lem}\label{vis2-L2.3}
  Assume that  (\ref{vis2-E1.3}) is satisfied and $(v,F)$ is the solution of system (\ref{vis2-E1.4}). Then the following is always true:
  \begin{equation}
    \partial_m E_{ij}-\partial_jE_{im}
       =E_{lj}\partial_lE_{im}-E_{lm}\partial_lE_{ij},
  \end{equation}
for all time $t\geq0$.
\end{lem}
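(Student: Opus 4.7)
My plan is to recast the claimed identity in terms of $F=I+E$ and reduce the statement to a linear transport equation with zero initial data. A direct substitution $E_{ij}=F_{ij}-\delta_{ij}$ into the identity to be proved shows, after a one-line simplification, that it is equivalent to the vanishing of
$$A_{ijm}:=F_{lm}\partial_l F_{ij}-F_{lj}\partial_l F_{im},$$
and the hypothesis (\ref{vis2-E1.3}) on the initial data says exactly $A_{ijm}(0,\cdot)=0$. Hence it suffices to exhibit a closed linear evolution for $A_{ijm}$ and appeal to uniqueness.

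The workhorse is the componentwise form $\partial_t F_{ij}+v_k\partial_k F_{ij}=(\partial_k v_i)F_{kj}$ of the transport equation for $F$ in (\ref{vis2-E1.1}). I would first differentiate in $x_l$ to obtain an equation for $\partial_l F_{ij}$, then apply the Leibniz rule to $\partial_t(F_{lm}\partial_l F_{ij})$ and fold the two convective contributions into $v_k\partial_k$ acting on the product. The resulting six-term expansion takes the form
$$\partial_t(F_{lm}\partial_l F_{ij})+v_k\partial_k(F_{lm}\partial_l F_{ij})=(\partial_k v_l)F_{km}\partial_l F_{ij}-F_{lm}(\partial_l v_k)\partial_k F_{ij}+F_{lm}F_{kj}\partial_l\partial_k v_i+F_{lm}(\partial_k v_i)\partial_l F_{kj},$$
and the first two terms on the right-hand side are negatives of one another after relabeling the dummy indices $k\leftrightarrow l$, so they cancel.

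Repeating the computation with $(m,j)$ swapped and subtracting, the ``pure Hessian'' contribution $(F_{lm}F_{kj}-F_{lj}F_{km})\partial_l\partial_k v_i$ vanishes because the prefactor is antisymmetric in $(k,l)$ while $\partial_l\partial_k v_i$ is symmetric, and the surviving terms reassemble precisely into $(\partial_k v_i)A_{kjm}$. The net result is the closed linear system
$$\partial_t A_{ijm}+v_k\partial_k A_{ijm}=(\partial_k v_i)\,A_{kjm},\qquad A_{ijm}\big|_{t=0}=0.$$
For $v$ Lipschitz in $x$, which is ensured by the functional setting of the solutions under consideration, uniqueness for this first-order linear transport system along the flow of $v$ (a Gronwall argument on characteristics) forces $A_{ijm}\equiv 0$ for all $t\geq 0$, which is the lemma.

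The main obstacle is purely algebraic: tracking the six terms in the Leibniz expansion and recognizing the two key cancellations, namely (i) the dummy-index swap $k\leftrightarrow l$ that kills the shear-type terms within each single expansion, and (ii) the symmetry of mixed partials $\partial_l\partial_k v_i=\partial_k\partial_l v_i$ that kills the residual Hessian contribution after subtracting the $(m,j)$-swapped identity. Neither Lemma \ref{vis2-L2.2} nor the determinant identity is needed for this step, in parallel with the direct argument of \cite{Lei}.
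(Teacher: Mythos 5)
Your proof is correct and takes essentially the same approach as the paper: the paper proves this lemma simply by invoking the arguments of Lemmas 1--3 in \cite{Lei}, and that argument is exactly your computation --- recast the identity as the vanishing of $A_{ijm}=F_{lm}\partial_l F_{ij}-F_{lj}\partial_l F_{im}$, use the transport equation for $F$ together with the two cancellations (dummy-index swap and symmetry of $\partial_l\partial_k v_i$) to get the closed linear equation $\partial_t A_{ijm}+v_k\partial_k A_{ijm}=(\partial_k v_i)A_{kjm}$ with $A|_{t=0}=0$ by (\ref{vis2-E1.3}), and conclude $A\equiv 0$ by Gronwall along the flow of the Lipschitz field $v$. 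Nothing further is needed.
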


\section{Littlewood--Paley theory and Besov spaces}\label{vis2-S3}
The proof of most of the results presented in this paper requires a
dyadic decomposition of Fourier variable (\textit{Littlewood--Paley
composition}). Let us briefly explain how it may be built in the
case $x\in \mathbb{R}^N$, $N\geq2$, (see
\cite{Danchin,Danchin2001,Danchin2003}).

 Let $\mathcal{S}(\mathbb{R}^N)$ be the Schwarz class. $\varphi(\xi)$ is a  smooth function valued in [0,1]
such that
    $$
    \textrm{supp}\varphi\subset\{\frac{3}{4}\leq|\xi|\leq\frac{8}{3}\}
    \ \textrm{ and }
     \sum_{q\in\mathbb{Z}}\varphi(2^{-q}\xi)=1,\  |\xi|\not=0.
    $$
Let $h(x)=(\mathcal{F}^{-1}\varphi)(x)$. For $f\in\mathcal{S'}$
(denote the set of temperate distributes, which is the dual one of
$\mathcal{S}$), we can define the homogeneous dyadic blocks as
follows:
        $$
        \Delta_{q}f(x):=\varphi(2^{-q}D)f(x)=2^{Nq}\int_{\mathbb{R}^N} h(2^{q}y)f(x-y)dy,
        \ \mbox{if}\        q\in\mathbb{Z},
        $$
where    $\mathcal{F}^{-1} $ represents the    inverse Fourier
transform. Define the low frequency cut-off by
    $$
    S_{q}f(x):=\sum_{p\leq
    q-1}\Delta_{p}f(x)=\chi(2^{-q}D)f(x).
    $$
 The  Littlewood--Paley
decomposition has nice properties of quasi-orthogonality,
    $$
    \Delta_{p}\Delta_{q}f_1\equiv 0,\ \mbox{if}\ |p-q|\geq 2,
    $$
and
        $$
        \Delta_{q}(S_{p-1}f_1\Delta_{p}f_2)\equiv 0,
               \ \mbox{if}\ |p-q|\geq 5.
        $$

\begin{lem}[Bernstein]\label{vis2-L3.1}
Let $k\in\mathbb{N}$ and $0<R_{1}<R_{2}$. There exists a constant
$C$ depending only on $R_{1},R_{2}$ and $N$ such that for all
$1\leqslant a\leqslant b\leqslant\infty$ and $f\in L^{a}$, we have
$$\mathrm{Supp}\ \mathcal{ F}f\subset \textsl{B}(0,R_{1}\lambda)\Rightarrow\sup_{|\alpha|=k}\|\partial^{\alpha}f\|_{L^{b}}\leqslant C^{k+1}\lambda^{k+N(\frac{1}{a}-\frac{1}{b})}\|f\|_{L^{a}};$$
$$\mathrm{Supp}\ \mathcal{ F}f\subset \textsl{C}(0,R_{1}\lambda,R_{2}\lambda)\Rightarrow C^{-k-1}\lambda^{k}\|f\|_{L^{a}}\leqslant \sup_{|\alpha|=k}\|\partial^{\alpha}f\|_{L^{a}}\leqslant C^{k+1}\lambda^{k}\|f\|_{L^{a}}.$$
Here, $\mathcal{F}$ represents the Fourier transform.
\end{lem}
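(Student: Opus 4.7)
The plan is to realize both inequalities as applications of Young's convolution inequality after a scaling argument. The Fourier support hypothesis on $f$ lets me freely insert a fixed smooth bump $\tilde\varphi$ that equals $1$ on the given support (rescaled to scale $\lambda$); this bump is what carries the dilation and produces the correct power of $\lambda$.

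First I would treat the forward inequality on the ball. Fix $\tilde\varphi \in C_c^\infty(\mathbb{R}^N)$ supported in $B(0, 2R_1)$ and equal to $1$ on $B(0, R_1)$, and write, for any multi-index $\alpha$ with $|\alpha|=k$,
$$\partial^\alpha f = g_\alpha^\lambda * f, \qquad g_\alpha^\lambda(x) = \lambda^{N+k}\, g_\alpha^1(\lambda x), \quad g_\alpha^1 := \mathcal{F}^{-1}\bigl((i\xi)^\alpha \tilde\varphi\bigr).$$
Young's inequality with $\tfrac{1}{c} = 1 + \tfrac{1}{b} - \tfrac{1}{a}$ gives $\|\partial^\alpha f\|_{L^b} \leq \|g_\alpha^\lambda\|_{L^c} \|f\|_{L^a}$, and a change of variables shows $\|g_\alpha^\lambda\|_{L^c} = \lambda^{k + N(1/a - 1/b)} \|g_\alpha^1\|_{L^c}$, which is the claimed inequality. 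The upper bound on the annulus is identical after replacing $\tilde\varphi$ with a bump supported in $C(0, R_1/2, 2R_2)$ and equal to $1$ on $C(0, R_1, R_2)$.

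The reverse Bernstein inequality is the more interesting direction. Because $|\xi|$ is bounded away from $0$ on the support of the annulus-bump $\tilde\varphi$, the multinomial identity $|\xi|^{2k} = \sum_{|\alpha|=k} \binom{k}{\alpha}\xi^{2\alpha}$ lets me define
$$m_\alpha(\xi) := \binom{k}{\alpha}\frac{\xi^\alpha}{i^{|\alpha|}|\xi|^{2k}}\,\tilde\varphi(\xi) \in C_c^\infty(\mathbb{R}^N), \qquad \sum_{|\alpha|=k} m_\alpha(\xi)(i\xi)^\alpha = \tilde\varphi(\xi).$$
Rescaling and using $\hat f = \tilde\varphi(\cdot/\lambda)\hat f$ yields $f = \lambda^{-k} \sum_\alpha \mathcal{F}^{-1}(m_\alpha(\cdot/\lambda)) * \partial^\alpha f$; since $\|\mathcal{F}^{-1}(m_\alpha(\cdot/\lambda))\|_{L^1}$ is scale-invariant, Young's inequality delivers the desired lower bound on $\|f\|_{L^a}$.

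The main obstacle is tracking the geometric-in-$k$ constant $C^{k+1}$. One must verify that $\|\mathcal{F}^{-1}((i\xi)^\alpha\tilde\varphi)\|_{L^c}$ and $\|\mathcal{F}^{-1}(m_\alpha)\|_{L^1}$ both grow at most like $C^k$ uniformly in $\alpha$; this follows from integration by parts together with the compact support of $\tilde\varphi$, but absorbing the polynomial-in-$k$ count of multi-indices with $|\alpha|=k$ into a single geometric factor, and keeping the constant independent of $\lambda$, requires a bit of careful book-keeping in the scaling.
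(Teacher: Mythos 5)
Your proof is correct. The paper itself gives no proof of this lemma --- it is stated as a classical fact of Littlewood--Paley theory, with the reader referred to the cited literature (Danchin's papers and the Bahouri--Chemin--Danchin book) --- and your argument is exactly the standard one found there: for the direct inequalities, insert a rescaled bump $\tilde\varphi(\cdot/\lambda)$ that equals $1$ on the Fourier support, write $\partial^\alpha f$ as a convolution with $\mathcal{F}^{-1}\bigl((i\xi)^\alpha\tilde\varphi(\cdot/\lambda)\bigr)$, and apply Young's inequality, the dilation producing precisely the factor $\lambda^{k+N(1/a-1/b)}$; for the reverse inequality on the annulus, the multinomial identity $|\xi|^{2k}=\sum_{|\alpha|=k}\binom{k}{\alpha}\xi^{2\alpha}$ yields smooth compactly supported multipliers $m_\alpha$ with $\sum_\alpha m_\alpha(\xi)(i\xi)^\alpha=\tilde\varphi(\xi)$, and Young's inequality with the scale-invariant $L^1$ norms of $\mathcal{F}^{-1}m_\alpha$ closes the argument. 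Your remark about the constant is also the right one: each $\|\mathcal{F}^{-1}\bigl((i\xi)^\alpha\tilde\varphi\bigr)\|_{L^c}$ and $\|\mathcal{F}^{-1}m_\alpha\|_{L^1}$ is bounded by $C^{k}$ via weighted estimates (integration by parts against $(1+|x|^2)^N$), and the polynomial-in-$k$ number of multi-indices with $|\alpha|=k$ is absorbed into the geometric factor $C^{k+1}$, with $C$ depending only on $R_1$, $R_2$, $N$ and never on $\lambda$ since the scaling has been factored out exactly.
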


The Besov space can be characterized in virtue of the
Littlewood--Paley decomposition.
\begin{defn}
Let $1\leq p\leq\infty$ and $s\in \mathbb{R}$. For $1\leq r\leq
\infty$, the Besov spaces $\dot{B}^{s}_{p,r}(\mathbb{R}^N)$,
$N\geq2$, are defined by
    $$
    f\in \dot{B}^{s}_{p,r}(\mathbb{R}^N)
    \Leftrightarrow \left\| 2^{qs}\|\Delta_{q}f\|_{L^{p}(\mathbb{R}^N)}
    \right\|_{l^r_q}<\infty
    $$
and $B^{s}(\mathbb{R}^N)=\dot{B}^{s}_{2,1}(\mathbb{R}^N)$.
\end{defn}

The definition of $\dot{B}^s_{p,r}(\mathbb{R}^N)$ does not depend on
the choice of the Littlewood--Paley decomposition.  Let us recall
some classical estimates in Sobolev spaces for the product of two
functions \cite{Danchin2010,Danchin2005}.

Formally, Bony's decomposition is defined by
    $$
    uv=T_uv+T_vu+R(u,v)=T_uv+T'_vu,
    $$
with
    $$
    T_uv=\sum_{q\in\mathbb{Z}}S_{q-1}u\Delta_qv,
    \ R(u,v)=\sum_{q\in\mathbb{Z}}\Delta_qu\widetilde{\Delta}_qv,
    \ \widetilde{\Delta}_qv=\sum_{|q'-q|\leq1}\Delta_{q'}v.
    $$
\begin{prop}\label{vis2-P3.1-0}
  For any $(s,p,r)\in \mathbb{R}\times[1,\infty]^2$ and $t<0$, there exists a constant $C$ such that
    $$
    \|T_uv\|_{\dot{B}^s_{p,r}}\leq C\|u\|_{L^\infty}\|v\|_{\dot{B}^s_{p,r}}
    \textrm{ and }
        \|T_uv\|_{\dot{B}^{s+t}_{p,r}}\leq C\|u\|_{\dot{B}^t_{p,r}}\|v\|_{\dot{B}^s_{p,r}}.
    $$
  For any $(s_1,p_1,r_1)$ and $(s_2,p_2,r_2)$ in $\mathbb{R}\times[1,\infty]^2$ and $t<0$, there exists a constant $C$ such that
   \begin{itemize}
     \item if $s_1+s_2>0$, $\frac{1}{p}=\frac{1}{p_1}+\frac{1}{p_2}\leq1$ and $\frac{1}{r}=\frac{1}{r_1}+\frac{1}{r_2}\leq1$, then
        $$
        \|R( u,v)\|_{\dot{B}^{s_1+s_2}_{p,r}}\leq C\|u\|_{\dot{B}^{s_1}_{p_1,r_1}}\|v\|_{\dot{B}^{s_2}_{p_2,r_2}};
    $$
     \item  if $s_1+s_2=0$, $\frac{1}{p}=\frac{1}{p_1}+\frac{1}{p_2}\leq1$ and $\frac{1}{r}=\frac{1}{r_1}+\frac{1}{r_2}\geq1$, then
        $$
        \|R( u,v)\|_{\dot{B}^{0}_{p,r}}\leq C\|u\|_{\dot{B}^{s_1}_{p_1,r_1}}\|v\|_{\dot{B}^{s_2}_{p_2,r_2}}.
    $$
   \end{itemize}
\end{prop}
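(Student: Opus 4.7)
My plan is to reduce all three assertions to dyadic $L^p$ estimates on single blocks combined with a discrete Young or H\"older inequality in the frequency index; the crucial input is the Fourier support information already recalled just before the statement. Namely, $S_{q-1}u\,\Delta_q v$ has spectrum in an annulus of size $\sim 2^q$, so $\Delta_j T_u v = \sum_{|j-q|\leq 4}\Delta_j(S_{q-1}u\,\Delta_q v)$, while $\Delta_q u\,\widetilde{\Delta}_q v$ has spectrum in a ball of radius $\sim 2^q$, so $\Delta_j R(u,v) = \sum_{q\geq j-N_0}\Delta_j(\Delta_q u\,\widetilde{\Delta}_q v)$ for some fixed integer $N_0$.

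For the paraproduct, bounding $\|S_{q-1}u\|_{L^\infty}\leq\|u\|_{L^\infty}$ and applying H\"older on each block gives $\|\Delta_j T_u v\|_{L^p}\leq C\sum_{|j-q|\leq 4}\|u\|_{L^\infty}\|\Delta_q v\|_{L^p}$; multiplying by $2^{js}$ and taking $\ell^r$ in $j$ yields the first inequality via a finite convolution. For the second inequality, the assumption $t<0$ lets me estimate $\|S_{q-1}u\|_{L^\infty}\leq C\,2^{-qt}\|u\|_{\dot{B}^{t}_{\infty,\infty}}$ through a convergent geometric series, combined if needed with a Besov embedding to accommodate the $\dot{B}^{t}_{p,r}$ hypothesis on $u$. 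Replacing $2^{-qt}$ by $2^{-jt}$ via $|j-q|\leq 4$ then closes this case after the same $\ell^r$ step. For the remainder, H\"older with $1/p=1/p_1+1/p_2$ yields
\[
\|\Delta_j R(u,v)\|_{L^p}\leq C\sum_{q\geq j-N_0}\|\Delta_q u\|_{L^{p_1}}\|\widetilde{\Delta}_q v\|_{L^{p_2}}.
\]
When $s_1+s_2>0$, multiplying by $2^{j(s_1+s_2)}$ and writing $2^{j(s_1+s_2)}=2^{q(s_1+s_2)}\cdot 2^{(j-q)(s_1+s_2)}$ exhibits an exponentially decaying tail in $q\geq j-N_0$; this is handled by Young's inequality, together with H\"older on the $(r_1,r_2)$ indices, producing the claim with $1/r=1/r_1+1/r_2\leq 1$.

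The only genuine subtlety, and the expected main obstacle, is the borderline case $s_1+s_2=0$: decay in $q$ is lost, so Young is no longer available, and one must absorb the full $q$-sum by a single H\"older inequality on the sequences $\bigl(2^{qs_1}\|\Delta_q u\|_{L^{p_1}}\bigr)\in\ell^{r_1}$ and $\bigl(2^{qs_2}\|\widetilde{\Delta}_q v\|_{L^{p_2}}\bigr)\in\ell^{r_2}$. This is precisely where the reversed condition $1/r_1+1/r_2\geq 1$ enters, ensuring that the product sequence is summable so that $\sup_j\|\Delta_j R(u,v)\|_{L^p}$ is controlled and the $\dot{B}^{0}_{p,r}$ norm (taken with the largest admissible $r$) is finite. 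Past this point the proof is entirely mechanical Littlewood--Paley bookkeeping and I do not anticipate any further difficulty.
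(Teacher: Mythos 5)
The paper never actually proves this proposition: it is recalled as classical and delegated to \cite{Danchin2010,Danchin2005} (see also Theorems 2.47 and 2.52 of \cite{Bahouri}), so the relevant comparison is with the standard proofs in those references, and your dyadic scheme is exactly that standard proof. Three of the four inequalities come out correctly. The $L^\infty$ paraproduct bound follows from the finite convolution over $|j-q|\le 4$ as you say; the remainder bound for $s_1+s_2>0$ follows from H\"older in $x$ with $1/p=1/p_1+1/p_2$, H\"older in the dyadic index placing the product sequence in $\ell^r$, and Young's inequality against the geometric tail $2^{(j-q)(s_1+s_2)}$, $q\ge j-N_0$; and in the endpoint case $s_1+s_2=0$ your reading is in fact the only correct one: since $1/r_1+1/r_2\ge 1$ the product sequence lies in $\ell^1$, and what this controls is $\sup_j\|\Delta_j R(u,v)\|_{L^p}$, i.e.\ the $\dot{B}^0_{p,\infty}$ norm. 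With $r$ literally defined by $1/r=1/r_1+1/r_2\ge1$ (so $r\le 1$), the printed conclusion is false (take $u$ and $v$ each a single dyadic block: then $\sum_{q\ge j-N_0}$ of the product sequence is bounded below by a constant for all $j\le N_0$, so its $\ell^r_j$ norm diverges); the classical statement, and what your argument proves, has $\dot{B}^0_{p,\infty}$ on the left.

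The genuine gap is in the second paraproduct estimate. Your bound $\|S_{q-1}u\|_{L^\infty}\le C2^{-qt}\|u\|_{\dot{B}^t_{\infty,\infty}}$ (valid precisely because $t<0$ makes $\sum_{q'\le q-2}2^{-q't}$ geometric) yields $\|T_uv\|_{\dot{B}^{s+t}_{p,r}}\le C\|u\|_{\dot{B}^t_{\infty,\infty}}\|v\|_{\dot{B}^s_{p,r}}$, but the ``Besov embedding'' you invoke to accommodate the printed hypothesis $u\in\dot{B}^t_{p,r}$ does not exist: for $p<\infty$ one only has $\dot{B}^t_{p,r}\hookrightarrow\dot{B}^{t-N/p}_{\infty,\infty}$, which loses $N/p$ derivatives, and there is no embedding of $\dot{B}^t_{p,r}$ into $\dot{B}^t_{\infty,\infty}$. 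Moreover no argument can close this step, because the inequality as printed is false: let $u$ be an $L^p$-normalized wave packet at frequency $2^q$ and $v$ one at frequency $2^{q+10}$, concentrated on a common ball of radius $2^{-q-10}$; then the block of $T_uv$ at frequency $\sim 2^{q+10}$ is essentially $u\,\Delta_{q+10}v$, and the ratio of the left-hand side to $\|u\|_{\dot{B}^t_{p,r}}\|v\|_{\dot{B}^s_{p,r}}$ grows like $2^{qN/p}$, exactly the loss in the embedding. The statement is a misprint of the classical one, which requires $\|u\|_{\dot{B}^t_{\infty,r_1}}$; note that this $L^\infty$-based version is also the only one the paper ever uses (e.g.\ the estimate of $(T_{\nabla v}E)^l$ in Section \ref{vis2-s5} invokes $\|\nabla v\|_{L^2_t(\dot{B}^{-1}_{\infty,\infty})}$). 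So your argument, with the phantom embedding deleted, proves precisely the corrected statement that the rest of the paper needs; it cannot, and need not, prove the statement as printed.
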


\begin{prop}\label{vis2-P3.1}
  Let $1\leq r,p,p_1,p_2\leq\infty$. Then following inequalities hold true:
    \begin{equation}
    \|uv\|_{\dot{B}^{s}_{p,r}}\lesssim \|u\|_{L^\infty}\|v\|_{\dot{B}^{s }_{p,r}}
    +\|v\|_{L^\infty}\|u\|_{\dot{B}^{s }_{p,r}},\ \textrm{ if  }\ s>0,\label{vis2-E3.1}
    \end{equation}
        \begin{equation}
    \|uv\|_{\dot{B}^{s_1+s_2-\frac{N}{p}}_{p,r}}\lesssim \|u\|_{\dot{B}^{s_1}_{p,r}}\|v\|_{\dot{B}^{s_2}_{p,\infty}},
    \ \textrm{ if }\ s_1,s_2<\frac{N}{p}\ \textrm{ and }\ s_1+s_2>0,
        \end{equation}
        \begin{equation}
    \|uv\|_{\dot{B}^{s}_{p,r}}\lesssim \|u\|_{\dot{B}^{s}_{p,r}}\|v\|_{\dot{B}^{\frac{N}{p}}_{p,\infty}\cap L^\infty},
    \ \textrm{ if }\ |s| <\frac{N}{p},
        \end{equation}
        \begin{equation}
    \|uv\|_{\dot{B}^{-\frac{N}{p} }_{p,\infty}}\lesssim \|u\|_{\dot{B}^{s}_{p,1}}
    \|v\|_{\dot{B}^{-s}_{p,\infty} },
    \ \textrm{ if }\  s  \in(-\frac{N}{p},\frac{N}{p}],\ p\geq2.
        \end{equation}
\end{prop}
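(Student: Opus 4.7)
The plan is to decompose each product via Bony's formula $uv=T_uv+T_vu+R(u,v)$ and combine the paraproduct and remainder bounds of Proposition \ref{vis2-P3.1-0} with the standard Bernstein embedding $\dot{B}^{\sigma}_{p,r}\hookrightarrow\dot{B}^{\sigma-N/p+N/q}_{q,r}$ for $p\leq q\leq\infty$ (which follows from Lemma \ref{vis2-L3.1}). In each case the two paraproducts are treated symmetrically, and only the remainder requires some care because the threshold condition on $s_1+s_2$ has to be verified.

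For \eqref{vis2-E3.1}, I would estimate $\|T_uv\|_{\dot{B}^s_{p,r}}\lesssim\|u\|_{L^\infty}\|v\|_{\dot{B}^s_{p,r}}$ and symmetrically for $T_vu$ using the first bound of Proposition \ref{vis2-P3.1-0}. For $R(u,v)$, since $s>0$ I write $s=s_1+s_2$ with $s_1>0$ arbitrarily small and $s_2=s-s_1>0$, and apply the first bullet of the remainder bound with one factor estimated by embedding into an $L^\infty$ space; alternatively, I spectrum-localise directly to get $\|\Delta_q R(u,v)\|_{L^p}\lesssim\sum_{q'\geq q-C}\|\Delta_{q'}u\|_{L^p}\|\tilde\Delta_{q'}v\|_{L^\infty}$ and sum in $q$. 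For the second estimate, since $s_1,s_2<N/p$ the constants $t_1=s_1-N/p$ and $t_2=s_2-N/p$ are strictly negative, so the embedding $\dot{B}^{s_i}_{p,r}\hookrightarrow\dot{B}^{t_i}_{\infty,r}$ plus the second paraproduct bound handles $T_uv$ and $T_vu$; the condition $s_1+s_2>0$ is exactly what the first remainder bullet requires. The third estimate follows similarly, using that $\dot{B}^{N/p}_{p,\infty}\cap L^\infty$ controls both the $L^\infty$ paraproduct bound and (after embedding) the negative-index paraproduct bound, with $|s|<N/p$ securing the non-endpoint remainder estimate.

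The main interest, and the main obstacle, is the endpoint \eqref{vis2-E3.1}$_4$ in $\dot{B}^{-N/p}_{p,\infty}$. I would again use Bony's decomposition and estimate dyadically. For $T_uv$, Bernstein together with $\dot{B}^s_{p,1}\hookrightarrow\dot{B}^{s-N/p}_{\infty,1}$ (valid up to $s=N/p$ thanks to the $\ell^1$ summability) yields $\|S_{q-1}u\|_{L^\infty}\lesssim 2^{q(N/p-s)}\|u\|_{\dot{B}^s_{p,1}}$, while $\|\Delta_q v\|_{L^p}\lesssim 2^{qs}\|v\|_{\dot{B}^{-s}_{p,\infty}}$, and the two factors collapse to the required $2^{qN/p}$. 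For $T_vu$, the hypothesis $s>-N/p$ ensures $\|S_{q-1}v\|_{L^\infty}\lesssim 2^{q(N/p+s)}\|v\|_{\dot{B}^{-s}_{p,\infty}}$, which multiplied by $\|\Delta_q u\|_{L^p}\lesssim 2^{-qs}c_q\|u\|_{\dot{B}^s_{p,1}}$ with $\sum c_q<\infty$ again gives the claimed $\dot{B}^{-N/p}_{p,\infty}$ bound. For $R(u,v)$, the condition $p\geq 2$ is what makes the Hölder product land in $L^{p/2}\subset L^p$ after Bernstein, costing $2^{qN/p}$; since $s_1+s_2=0$, the second bullet of Proposition \ref{vis2-P3.1-0} applies with $r_1=1$, $r_2=\infty$ so that $1/r_1+1/r_2\geq 1$, which is precisely the summability the paraproduct formulation needs.

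The technical heart of the argument is the endpoint part: balancing the regularity budget $-N/p$ with the $\ell^1$ index on $u$ and the $\ell^\infty$ index on $v$, and ensuring that $p\geq 2$ is used exactly once, in the Bernstein step for $R(u,v)$. The three other estimates are mostly bookkeeping with Proposition \ref{vis2-P3.1-0}, so once the dyadic estimates above are fixed, the remaining inequalities are routine.
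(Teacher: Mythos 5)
Your argument is sound, and the comparison with the paper is quickly made: the paper does not prove Proposition \ref{vis2-P3.1} at all --- it recalls the four inequalities as classical product laws, citing \cite{Danchin2010,Danchin2005} --- so your Bony-decomposition proof is essentially the argument contained in those references rather than a departure from the paper. Your handling of the one genuinely delicate inequality, the fourth, is correct: the $\ell^1$ summability of $u$ is exactly what permits the endpoint $s=N/p$ in the bound $\|S_{q-1}u\|_{L^\infty}\lesssim 2^{q(N/p-s)}\|u\|_{\dot{B}^s_{p,1}}$; the hypothesis $s>-N/p$ is what makes the low-frequency sum in $T_vu$ geometric; and $p\geq2$ is used once, in the remainder. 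One phrasing there is an abuse: there is no inclusion ``$L^{p/2}\subset L^p$''; what you actually need is that $\Delta_q$ is bounded on $L^{p/2}$ and that $\|\Delta_q w\|_{L^p}\lesssim 2^{qN/p}\|\Delta_q w\|_{L^{p/2}}$, both of which require $p/2\geq 1$ --- this is precisely where $p\geq 2$ enters, as you say.

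One point should be made explicit, however: your proofs of the \emph{second and third} inequalities also need $p\geq2$, although the proposition is stated for $1\leq p\leq\infty$. Both remainder bullets of Proposition \ref{vis2-P3.1-0} require $\frac{1}{p_1}+\frac{1}{p_2}\leq1$, which with $p_1=p_2=p$ forces $p\geq2$, and this is not a removable artifact of the method: for $1\leq p<2$ the stated inequalities are false in general, the correct hypotheses being $s_1+s_2>N\left(\frac{2}{p}-1\right)$ in the second and $-N\left(1-\frac{1}{p}\right)<s<\frac{N}{p}$ in the third. So the restriction is a defect of the paper's statement rather than of your proof; since the paper invokes Proposition \ref{vis2-P3.1} only with exponents $p_1,p_2\in[2,\infty)$, your argument covers the full range in which the proposition is both true and actually used. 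Adding one sentence acknowledging this restriction would make your write-up complete.
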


Finally, we state the definition of
$\widetilde{L}^q(0,T;B^{s}_{p,r})$, which is first introduced by
J.-Y. Chemin and N. Lerner in \cite{Chemin1992}.

\begin{defn}
  We denote by $\widetilde{L}^q(0,T;\dot{B}^{s}_{p,r})$  the space of distributions, which
  is the completion of $\mathcal{S}(\mathbb{R}^d)$ by the following
  norm:
    $$
        \|a\|_{\widetilde{L}^q(0,T;\dot{B}^{s}_{p,r})}= \left\|2^{sk}
        \|\Delta_k
        a\|_{L^q(0,T;L^p(\mathbb{R}^d))}\right\|_{l^r_k}.
    $$
\end{defn}
Similar results of Propositions \ref{vis2-P3.1-0}--\ref{vis2-P3.1} in space $\widetilde{L}^q(0,T;\dot{B}^{s}_{p,r})$  also hold  \cite{Danchin2010,Danchin2005}.

Denote
 \begin{equation}
   \|v\|_{\widetilde{L}^1_T(\tilde{B}^{s+2,s+1}_{2,r})}
   : =\left\|2^{p(s+1)}\min(R_0,2^p)\|\Delta_p v\|_{L^1_T(L^2_x)} \right\|_{l^r_p}
    \end{equation}

\begin{lem}\label{vis2-L3.2}
  Let $s>-1$ and $r\in[1,\infty]$, we have
  \begin{eqnarray}
    \|uv\|_{\widetilde{L}^1_T(\tilde{B}^{s+2,s+1}_{2,r})}
    \leq C\|u\|_{L^\infty_T(L^\infty_x)}\|v\|_{\widetilde{L}^1_T(\tilde{B}^{s+2,s+1}_{2,r})}
    +C\|v\|_{L^\infty_T(L^\infty_x)}
   \|u\|_{\widetilde{L}^1_T(\tilde{B}^{s+2,s+1}_{2,r})}.
  \end{eqnarray}
\end{lem}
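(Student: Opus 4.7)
The plan is to apply Bony's paraproduct decomposition
$$uv=T_u v+T_v u+R(u,v),$$
and to control each piece against the weight $\phi(p):=2^{p(s+1)}\min(R_0,2^p)$ that defines the hybrid norm. The key preliminary observation is that, under the assumption $s>-1$, both exponents $s+1$ and $s+2$ are strictly positive, so $\phi$ is monotonically nondecreasing, growing exponentially at rate $s+2$ on $\{2^p\le R_0\}$ and at rate $s+1$ on $\{2^p>R_0\}$. A useful ancillary fact is that $\phi(p)\sim\phi(q)$ whenever $|p-q|\le C$, uniformly across the transition at $R_0$.

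For the paraproduct $T_u v=\sum_q S_{q-1}u\,\Delta_q v$ I would invoke the quasi-orthogonality identity $\Delta_p(S_{q-1}u\,\Delta_q v)\equiv 0$ for $|p-q|\ge 5$ recalled in Section \ref{vis2-S3}, which reduces matters to
$$\|\Delta_p(T_u v)\|_{L^1_T(L^2_x)}\lesssim \|u\|_{L^\infty_T(L^\infty_x)}\sum_{|q-p|\le 4}\|\Delta_q v\|_{L^1_T(L^2_x)}.$$
Multiplying by $\phi(p)$, using $\phi(p)\sim\phi(q)$ on the relevant range, and taking the $l^r_p$-norm yields the bound $\|u\|_{L^\infty_T(L^\infty_x)}\|v\|_{\widetilde{L}^1_T(\tilde{B}^{s+2,s+1}_{2,r})}$. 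The symmetric argument for $T_v u$ produces the second term on the right-hand side of the lemma.

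The real work lies in the remainder $R(u,v)=\sum_q \Delta_q u\,\widetilde{\Delta}_q v$, whose summands have spectrum in a ball of radius $\sim 2^q$, so $\Delta_p R(u,v)$ only collects indices $q\ge p-N_0$ for a fixed integer $N_0$. Using $\|\Delta_q u\|_{L^\infty}\le \|u\|_{L^\infty}$ gives
$$\|\Delta_p R(u,v)\|_{L^1_T(L^2_x)}\lesssim \|u\|_{L^\infty_T(L^\infty_x)}\sum_{q\ge p-N_0}\|\widetilde{\Delta}_q v\|_{L^1_T(L^2_x)}.$$
I would then carry out a four-case split on whether $2^p$ and $2^q$ lie below or above $R_0$ to obtain the weight-ratio estimate
$$\frac{\phi(p)}{\phi(q)}\le C\,2^{-(q-p)(s+1)},\qquad q\ge p-N_0,$$
the finitely many indices $p-N_0\le q<p$ contributing a bounded constant, and the indices $q\ge p$ contributing exponential decay (at rate $s+1$ in the high-high case, $s+2$ in the low-low case). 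With $b_q:=\phi(q)\|\widetilde{\Delta}_q v\|_{L^1_T(L^2_x)}$, the quantity $\phi(p)\|\Delta_p R(u,v)\|_{L^1_T(L^2_x)}$ is then dominated by $\|u\|_{L^\infty_T(L^\infty_x)}$ times the convolution of $(b_q)$ with a fixed $l^1$ kernel, and Young's inequality $l^1\ast l^r\hookrightarrow l^r$ closes the estimate.

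The main obstacle is exactly the weight-ratio bound in the remainder step: this is where the hypothesis $s>-1$ is consumed, since $\phi$ only grows at rate $s+1$ at high frequencies, and without $s+1>0$ the Young kernel would fail to belong to $l^1$. Assembling the three contributions (one may estimate $R(u,v)$ with either $\|u\|_{L^\infty}$ or $\|v\|_{L^\infty}$ on the outside) then delivers the lemma.
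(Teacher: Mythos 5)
Your proposal is correct and follows essentially the same route as the paper: Bony decomposition, with both paraproducts controlled by the $L^\infty$ norm of the low-frequency factor together with near-diagonal comparability of the hybrid weight, and the remainder controlled by the weight-ratio bound $2^{(p-q)(s+1)}\min(R_0,2^p)/\min(R_0,2^q)\lesssim 2^{-(q-p)(s+1)}$ for $q\gtrsim p$, followed by Young's inequality in $l^r$, which is exactly where the paper also consumes the hypothesis $s>-1$. The only cosmetic differences are your four-case bookkeeping for the weight ratio (the paper bounds the $\min$-ratio by a constant in one line) and your choice of which factor carries the $L^\infty$ norm in the remainder, which is immaterial since the stated inequality is symmetric.
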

\begin{proof}
We write, for $p\in\mathbb{Z}$,
$$
\Delta_p T_uv=\sum_{|q-p|\leq 3}\Delta_p(S_{q-1}u\Delta_q v),
$$
hence,
    $$
    \|\Delta_p T_uv\|_{L^1_T(L^2)}
    \leq C\sum_{|q-p|\leq 3}\|u\|_{L^\infty_T(L^\infty_x)}\|\Delta_q v\|_{L^1_T(L^2)},
    $$
and
 \begin{equation}
    \|T_uv\|_{\widetilde{L}^1_T(\tilde{B}^{s+2,s+1}_{2,r})}
    \leq C\|u\|_{L^\infty_T(L^\infty_x)}\|v\|_{\widetilde{L}^1_T(\tilde{B}^{s+2,s+1}_{2,r})}.
 \end{equation}
Similarly, we have
    \begin{equation}
    \|T_vu\|_{\widetilde{L}^1_T(\tilde{B}^{s+2,s+1}_{2,r})}
    \leq C\|v\|_{L^\infty_T(L^\infty_x)}\|u\|_{\widetilde{L}^1_T(\tilde{B}^{s+2,s+1}_{2,r})}.
 \end{equation}
 We write, for $p\in\mathbb{Z}$,
$$
\Delta_p R(u,v)=\sum_{ q\geq p-2}\Delta_p(\Delta_{q}u\widetilde{\Delta}_q v),
$$
hence,
\begin{eqnarray*}
&&2^{p(s+1)}\min(R_0,2^p)\|\Delta_p R(u,v)\|_{L^1_T(L^2)}\\
&\leq &
C\sum_{ q\geq p-2}
2^{(p-q)(s+1)}\frac{\min(R_0,2^p)}{\min(R_0,2^q)}
2^{q(s+1)}\min(R_0,2^q)\|\Delta_{q}u\|_{L^1_T(L^2)}
\| v\|_{L^\infty_T(L^\infty_x)}\\
&\leq &
C\sum_{ q\geq p-2}
2^{(p-q)(s+1)} c_q \| u\|_{\widetilde{L}^1_T(\tilde{B}^{s+2,s+1}_{2,r})}
\| v\|_{L^\infty_T(L^\infty_x)},
\end{eqnarray*}
and
        \begin{equation}
    \|R(u,v)\|_{\widetilde{L}^1_T(\tilde{B}^{s+2,s+1}_{2,r})}
    \leq C\|v\|_{L^\infty_T(L^\infty_x)}\|u\|_{\widetilde{L}^1_T(\tilde{B}^{s+2,s+1}_{2,r})},
 \end{equation}
 where we use the fact that $s>-1$.
\end{proof}
\section{A linear model with convection}\label{vis2-S4}
After the change of function
    $$
    c=\Lambda^{-1}{ \nabla\cdot E},
    $$
the system (\ref{vis2-E1.4}) reads (\ref{vis2-E1.14}). At first, we
will study the following mixed linear system
    \begin{equation}
  \left\{\begin{array}{l}
    \nabla \cdot v=\nabla \cdot c=0, \ \ x\in \mathbb{R}^N,\ N\geq2,\\
      v_{it}+T_u\cdot \nabla v_i+\partial_i p-\mu\Delta v_i-\Lambda c_i=G_i,\\
                     c_t+T_u\cdot \nabla c+\Lambda v =L,\\
                (v,c)(0,x)=(v_0,c_0)(x),
  \end{array}
  \right. \label{vis2-E4.1}
\end{equation}
where div$u=0$ and $c_0=\Lambda^{-1}\nabla\cdot E_0$.

 Using the
similar arguments as that in  \cite{Danchin} (Proposition 2.3), we
can obtain the following proposition and omit the details. The main
different is that there is a pressure term $\nabla p$ in
(\ref{vis2-E4.1})$_2$. Using that fact that $\nabla\cdot
v=\nabla\cdot c=0$, we have $\int v\cdot\nabla p = \int c\cdot\nabla
p =0$, and obtain the following proposition.
\begin{prop}\label{vis2-P4.3}
  Let $(v,c)$ be a solution of (\ref{vis2-E4.1}) on $[0,T]$, $\rho\in \mathbb{R}$
  and
  $\widetilde{U}(t)=\int^t_0\|\nabla u(\tau)\|_{L^{\infty}}d\tau$.
   The following estimate holds:
    \begin{eqnarray*}
     &&\|(v,c)^l \|_{\widetilde{L}^\infty_t(\dot{B}^\rho_{2,r})\cap
     \widetilde{L}^1_t(\dot{B}^{\rho+2}_{2,r})}
     +\|c^h\|_{\widetilde{L}^\infty_t(\dot{B}^{\rho+1}_{2,r})\cap
     \widetilde{L}^1_t(\dot{B}^{\rho+1}_{2,r})}
     +\|v^h\|_{\widetilde{L}^\infty_t(\dot{B}^{\rho}_{2,r})\cap
     \widetilde{L}^1_t(\dot{B}^{\rho+2}_{2,r})}\\
     &\leq& Ce^{C\widetilde{U}(t)}\big(
      \|(v_0,c_0)^l \|_{\dot{B}^\rho_{2,r}}
      +\|c_0^h \|_{\dot{B}^{\rho+1}_{2,r}}
            +\|v_0^h \|_{\dot{B}^{\rho}_{2,r}}
            +\|(L,G)^l
            \|_{\widetilde{L}^1_t(\dot{B}^{\rho}_{2,r})}\\
            &&
     +\|L^h \|_{\widetilde{L}^1_t(\dot{B}^{\rho+1}_{2,r})}
          +\|G^h\|_{\widetilde{L}^1_t(\dot{B}^{\rho}_{2,r})}
      \big),
    \end{eqnarray*}
where $C$ depends only on $N$ and $\rho$.
\end{prop}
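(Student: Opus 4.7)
The plan is to follow the compressible Navier--Stokes strategy of \cite{Danchin2010,Chen}, adapting the frequency-localized energy method to the paraproduct-convective linear system (\ref{vis2-E4.1}). The threshold $R_0$ comes from the symbol analysis of the coupled system (ignoring transport): the $2\times2$ matrix with entries $-\mu|\xi|^2,\,|\xi|,\,-|\xi|,\,0$ has complex eigenvalues with real part $-\tfrac{1}{2}\mu|\xi|^2$ for $|\xi|$ small (parabolic behaviour of both $v$ and $c$) and two real eigenvalues of order $-\mu|\xi|^2$ and $-1/\mu$ for $|\xi|$ large (parabolic smoothing of $v$, fixed-rate decay of $c$). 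This dictates the low/high split and the one-derivative shift between $c_0^l$ and $c_0^h$.

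I apply $\Delta_q$ to (\ref{vis2-E4.1}) and test the two equations against $\Delta_q v$ and $\Delta_q c$ in $L^2$. The pressure drops out since $\nabla\cdot v=0$; the convective paraproduct $T_u\cdot\nabla$ contributes only through the commutator $[\Delta_q,T_u\cdot\nabla]$, bounded in the standard way by $c_q 2^{-q\rho}\|\nabla u\|_{L^\infty}\|f\|_{\dot B^\rho_{2,r}}$, which eventually yields the Gronwall factor $e^{C\widetilde U(t)}$. The coupling terms $\mp\int\Lambda\Delta_q c\cdot\Delta_q v\,dx$ cancel by self-adjointness of $\Lambda$, producing the basic identity
\[
\tfrac{1}{2}\tfrac{d}{dt}\bigl(\|\Delta_q v\|_{L^2}^2+\|\Delta_q c\|_{L^2}^2\bigr)+\mu\|\Lambda\Delta_q v\|_{L^2}^2=(\text{commutators})+(\text{sources}).
\]
For low frequencies $2^q\leq R_0$, the dissipation $\mu 2^{2q}\|\Delta_q v\|_{L^2}^2$ combined with the symbol analysis yields damping of both components at rate $\mu 2^{2q}$, producing the claimed bounds for $(v,c)^l$ in $\widetilde L^\infty_t(\dot B^\rho_{2,r})\cap\widetilde L^1_t(\dot B^{\rho+2}_{2,r})$ after Gronwall.

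For high frequencies $2^q>R_0$ a refinement is needed, since $c$ has no intrinsic smoothing. Following \cite{Danchin2010}, I introduce the modified Lyapunov functional
\[
\mathcal L_q^2(t):=\|\Delta_q v\|_{L^2}^2+\|\Delta_q c\|_{L^2}^2-2\kappa\,2^{-q}\!\int\Lambda\Delta_q c\cdot\Delta_q v\,dx,
\]
with $\kappa>0$ fixed small enough that $\mathcal L_q\simeq\|\Delta_q v\|_{L^2}+\|\Delta_q c\|_{L^2}$ uniformly for $2^q>R_0$. Differentiating $\mathcal L_q^2$ along the system, the $\Lambda\cdot\Lambda$ cross term generates an additional damping $\sim\|\Delta_q c\|_{L^2}^2$ of size independent of $q$ (matching the fixed rate $1/\mu$ from the symbol analysis), while $v$ retains its parabolic damping $\mu 2^{2q}\|\Delta_q v\|_{L^2}^2$. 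This is exactly what produces $c^h\in\widetilde L^1_t(\dot B^{\rho+1}_{2,r})$ with one fewer derivative than $v^h\in\widetilde L^1_t(\dot B^{\rho+2}_{2,r})$, consistent with the hypothesis $c_0^h\in\dot B^{\rho+1}_{2,r}$.

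Finally I apply a frequency-wise Gronwall to absorb the commutator remainders into the factor $e^{C\widetilde U(t)}$ and then sum against the $\ell^r_q$ weights, using Minkowski to move the time integral inside $\ell^r_q$ for the $\widetilde L^1_t$ pieces and keeping the source contributions in their natural norms. The main obstacle is the high-frequency step: one has to verify that $\kappa$ can be chosen uniformly in $q$ so that the extra $\|\Delta_q c\|_{L^2}^2$ damping strictly dominates the lower-order cross terms generated by differentiating $\mathcal L_q^2$, while still keeping $\mathcal L_q^2$ comparable to the natural energy. Once this partial-parabolicity argument is in place, the remaining commutator and paraproduct estimates are the routine adaptations of \cite{Danchin2010,Chen}, which is why the text only sketches the difference coming from the presence of the pressure $\nabla p$.
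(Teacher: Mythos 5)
Your overall strategy---frequency-localized energy estimates with a corrected Lyapunov functional, commutator bounds for the paraproduct convection producing the factor $e^{C\widetilde U(t)}$, and cancellation of the pressure via the divergence-free conditions---is the route the paper intends: it omits the proof entirely, deferring to Danchin's Proposition 2.3 for the compressible system and noting only the pressure point. However, two of your steps do not close as written. First, the low-frequency case is a genuine gap: the basic identity $\tfrac12\tfrac{d}{dt}(\|\Delta_q v\|_{L^2}^2+\|\Delta_q c\|_{L^2}^2)+\mu\|\Lambda\Delta_q v\|_{L^2}^2=(\text{comm.})+(\text{sources})$ contains \emph{no} damping on $\Delta_q c$, and ``combined with the symbol analysis'' is not an admissible step, because once the convection terms $T_u\cdot\nabla$ are present the equation is no longer a Fourier-multiplier equation, so the Green-matrix information of Proposition \ref{vis2-P4.2} cannot be invoked. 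To obtain $c^l\in\widetilde L^1_t(\dot B^{\rho+2}_{2,r})$ you need the cross term at low frequencies too, e.g. adding $-2\kappa\mu\int\Lambda\Delta_q c\cdot\Delta_q v\,dx$ with coefficient \emph{independent} of $q$: its derivative yields the missing damping $\kappa\mu\|\Lambda\Delta_q c\|_{L^2}^2\sim\kappa\mu2^{2q}\|\Delta_q c\|_{L^2}^2$, and the unwanted term $\kappa\mu^2\int\Lambda\Delta_q c\cdot\Delta\Delta_q v$ is absorbable precisely because $2^q\le R_0$.

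Second, your high-frequency cross term has the wrong scaling, and the uniformity in $q$ that you yourself flag as the main obstacle genuinely fails for it. Differentiating $\mathcal L_q^2$ with coefficient $\kappa 2^{-q}$ produces the term $2\kappa2^{-q}\mu\int\Lambda\Delta_q c\cdot\Delta\Delta_q v\,dx$, of size $\kappa\mu2^{2q}\|\Delta_q c\|_{L^2}\|\Delta_q v\|_{L^2}$; Young's inequality against the available damping $\kappa2^{q}\|\Delta_q c\|_{L^2}^2+\mu2^{2q}\|\Delta_q v\|_{L^2}^2$ forces $\kappa\lesssim\mu2^{-q}$, which is not uniform in $q$. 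A sanity check you had in hand: if your functional closed, it would give decay of $c^h$ at rate $2^q$, contradicting the constant rate $1/\mu$ that you correctly read off from $\lambda_+\to-1/\mu$, and which is exactly what the conclusion encodes ($\|c^h\|_{\widetilde L^1_t(\dot B^{\rho+1}_{2,r})}$ carries the \emph{same} index as the $\widetilde L^\infty_t$ norm). The correct coefficient is $2^{-2q}$, i.e. the cross term $-2\kappa\mu^{-1}\int\Lambda^{-1}\Delta_q c\cdot\Delta_q v\,dx$: then the damping on $c$ is $\kappa\mu^{-1}\|\Delta_q c\|_{L^2}^2$ (constant rate), and the bad term is of size $\kappa2^{q}\|\Delta_q c\|_{L^2}\|\Delta_q v\|_{L^2}\le\tfrac{\kappa}{2\mu}\|\Delta_q c\|_{L^2}^2+\tfrac{\kappa\mu}{2}2^{2q}\|\Delta_q v\|_{L^2}^2$, absorbed uniformly in $q$ once $\kappa\le1$. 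Finally, a small but necessary point you omit: when differentiating any of these cross terms, $v_t$ brings in $\nabla p$ tested against $\Lambda^{\pm1}\Delta_q c$, and this vanishes only thanks to $\nabla\cdot c=0$; this is exactly why the paper insists on \emph{both} $\int v\cdot\nabla p=0$ and $\int c\cdot\nabla p=0$, whereas your write-up only uses $\nabla\cdot v=0$.
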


Then, we will study the following mixed linear system
    \begin{equation}
  \left\{\begin{array}{l}
    \nabla \cdot v=\nabla \cdot c=0, \ \ x\in \mathbb{R}^N,\ N\geq2,\\
      v_{t} -\mu\Delta v -\Lambda c =0,\\
                     c_t +\Lambda v =0,\\
                (v,c)(0,x)=(v_0,c_0)(x).
  \end{array}
  \right. \label{vis2-E4.1-11}
\end{equation}

Using the similar arguments as that in \cite{Chen} (Lemma 4.1--4.2, Proposition 4.4), we have the following lemma.
\begin{lem}
  Let $\mathcal{G}$ be the Green matrix of the following system
       \begin{equation}
  \left\{\begin{array}{l}
              c_t +\Lambda v =0,\\
               v_{t} -\mu\Delta v -\Lambda c =0,
                  \end{array}
  \right.
\end{equation}
then, we have the following explicit expression for $\hat{\mathcal{G}}$:
    \begin{equation}
      \hat{\mathcal{G}}=\left[
      \begin{array}{ll}
        \frac{\lambda_+e^{\lambda_-t}-\lambda_-e^{\lambda_+t}}{\lambda_+-\lambda_-}I_{N\times N}
        &-\frac{ e^{\lambda_+t}- e^{\lambda_-t}}{\lambda_+-\lambda_-}|\xi|I_{N\times N}\\
       -\frac{ e^{\lambda_+t}- e^{\lambda_-t}}{\lambda_+-\lambda_-}|\xi|I_{N\times N}
        & \frac{\lambda_+e^{\lambda_+t}-\lambda_-e^{\lambda_-t}}{\lambda_+-\lambda_-}I_{N\times N}
      \end{array}
      \right],
    \end{equation}
    where
    $$\lambda_\pm=-\frac{1}{2}\mu|\xi|^2\pm \frac{1}{2}\sqrt{\mu^2|\xi|^4-4|\xi|^2}.$$
\end{lem}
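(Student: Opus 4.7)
The statement is a purely algebraic identity on the Fourier side, so the plan is to apply $\mathcal{F}$ to the system and evaluate the resulting matrix exponential explicitly. Using $\widehat{\Lambda f}(\xi) = |\xi|\hat f(\xi)$ and $\widehat{-\mu\Delta f}(\xi) = \mu|\xi|^2\hat f(\xi)$, the Fourier transform decouples the system across frequencies into the $(2N)\times(2N)$ linear ODE
\begin{equation*}
\partial_t \begin{pmatrix} \hat c \\ \hat v \end{pmatrix}(t,\xi)
= M(\xi)\begin{pmatrix} \hat c \\ \hat v \end{pmatrix}(t,\xi),
\qquad
M(\xi)= \begin{pmatrix} 0 & -|\xi|\,I_{N\times N}\\ |\xi|\,I_{N\times N} & -\mu|\xi|^2\, I_{N\times N}\end{pmatrix}.
\end{equation*}
The Fourier symbol of the Green matrix is therefore $\hat{\mathcal G}(t,\xi) = e^{tM(\xi)}$, and everything reduces to computing this matrix exponential.

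Since every block of $M(\xi)$ is a scalar multiple of $I_{N\times N}$, the exponentiation is really a $2\times 2$ scalar problem on each Fourier fibre. I would first compute the characteristic polynomial $\det(M(\xi)-\lambda I)=\lambda^2+\mu|\xi|^2\lambda+|\xi|^2$, whose roots are exactly the $\lambda_\pm(\xi)$ displayed in the statement. On the open dense set $\{\xi : \lambda_+\ne\lambda_-\}$, the Cayley--Hamilton theorem lets me look for the exponential in the form $e^{tM}=a(t)I+b(t)M$; evaluating both sides on the eigenspaces of $M$ gives the linear system $a(t)+b(t)\lambda_\pm=e^{\lambda_\pm t}$, whence
\begin{equation*}
b(t)=\frac{e^{\lambda_+ t}-e^{\lambda_- t}}{\lambda_+-\lambda_-},\qquad
a(t)=\frac{\lambda_+e^{\lambda_- t}-\lambda_-e^{\lambda_+ t}}{\lambda_+-\lambda_-}.
\end{equation*}

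Substituting back into $e^{tM}=aI+bM$ reads off the $(1,1)$ block and the two off-diagonal blocks immediately. The $(2,2)$ block is $(a-b\mu|\xi|^2)I_{N\times N}$, and via Vieta's identity $\lambda_++\lambda_-=-\mu|\xi|^2$ a short simplification collapses it to $(\lambda_+ e^{\lambda_+ t}-\lambda_- e^{\lambda_- t})/(\lambda_+-\lambda_-)\,I_{N\times N}$, matching the statement. The only potential obstacle is the degenerate sphere $\mu|\xi|=2$ where $\lambda_+=\lambda_-=-1/\mu$; there the formulas are read as the obvious limits (equivalently, one argues with the Jordan form for the confluent eigenvalue). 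Since both sides of the identity depend analytically on $\xi$ away from the origin, the formula extends by continuity across this exceptional sphere, so the degenerate case is not a genuine difficulty.
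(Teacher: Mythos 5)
Your overall strategy is correct and is, in substance, the paper's own: the paper gives no computation for this lemma at all, but simply cites Chen--Miao--Zhang (\cite{Chen}, Lemmas 4.1--4.2), where the symbol is obtained exactly as you obtain it --- the Fourier transform reduces the system to a $2\times2$ ODE on each frequency fibre, and the matrix exponential is computed from the characteristic polynomial $\lambda^2+\mu|\xi|^2\lambda+|\xi|^2$ via the ansatz $e^{tM}=a(t)I+b(t)M$. Your expressions for $a$ and $b$, and the simplification of the $(2,2)$ block using $\lambda_++\lambda_-=-\mu|\xi|^2$, are all correct, and the continuity argument across the degenerate sphere $\mu|\xi|=2$ is fine (small slip: the confluent eigenvalue there is $-2/\mu$, not $-1/\mu$).

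However, your final step claims more than your computation delivers, and this hides a real discrepancy with the statement. With
\begin{equation*}
M(\xi)=\begin{pmatrix} 0 & -|\xi|\,I_{N\times N}\\ |\xi|\,I_{N\times N} & -\mu|\xi|^2\,I_{N\times N}\end{pmatrix},
\end{equation*}
the identity $e^{tM}=aI+bM$ gives the $(2,1)$ block as $+\frac{e^{\lambda_+t}-e^{\lambda_-t}}{\lambda_+-\lambda_-}|\xi|\,I_{N\times N}$, \emph{opposite} in sign to the $(1,2)$ block, whereas the lemma prints both off-diagonal blocks with a minus sign. The printed symmetric matrix cannot be the propagator: one must have $\partial_t\hat{\mathcal G}(\xi,t)|_{t=0}=M(\xi)$, whose $(2,1)$ entry is $+|\xi|$, while differentiating the printed $(2,1)$ entry at $t=0$ gives $-|\xi|$. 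So the lemma as stated carries a sign typo in the $(2,1)$ block; your computation is the correct one, but asserting that substituting back ``reads off \ldots\ the two off-diagonal blocks immediately'' claims agreement where there is none. A complete proof should state the corrected sign explicitly. (The typo is harmless for the rest of the paper, since the subsequent estimates in this section --- the bounds on $\hat{\mathcal G}$, $\hat{\mathcal G}^1$, $\hat{\mathcal G}^2$ and Lemma \ref{vis2-L4.1} --- use only modulus bounds on the entries, which are insensitive to this sign.)
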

 \begin{lem}
    There exist positive constant $R$ and $\vartheta$ depending on $\mu$ such that
        \begin{equation}
    |\partial^\alpha_\xi \hat{\mathcal{G}}(\xi,t)|\leq C_{|\alpha|} e^{-\vartheta|\xi|^2t}(1+|\xi|)^\alpha
    (1+t)^{|\alpha|},\ \forall\ |\xi|\leq R,
        \end{equation}
        \begin{eqnarray}
          \hat{\mathcal{G}}(\xi,t)&=&e^{-\mu^{-1}t}\left[
      \begin{array}{ll}
        I_{N\times N}
        &0\\
      0
        & 0
      \end{array}
      \right]+e^{-\mu|\xi|^2t}\left[
      \begin{array}{ll}
       0
        &0\\
      0
        &  I_{N\times N}
      \end{array}
      \right]\nonumber\\
      &&+\hat{\mathcal{G}}^1(\xi,t)\left[
      \begin{array}{ll}
       0
        & I_{N\times N}\\
    I_{N\times N}
        &    0
      \end{array}
      \right]+\hat{\mathcal{G}}^2(\xi,t)\left[
      \begin{array}{ll}
       I_{N\times N}
        & 0\\
    0
        &    I_{N\times N}
      \end{array}
      \right],\ \forall\ |\xi|\geq R,
        \end{eqnarray}
      where $\hat{\mathcal{G}}^1$ and $\hat{\mathcal{G}}^2$ satisfy the estimates
        \begin{equation}
          |\partial^\alpha_\xi \hat{\mathcal{G}}^1|\leq C|\xi|^{-|\alpha|-1}(e^{-\frac{t}{2\mu}}+e^{-\vartheta |\xi|^2t}),
        \end{equation}
          \begin{equation}
          |\partial^\alpha_\xi \hat{\mathcal{G}}^2|\leq C|\xi|^{-|\alpha|-2}(e^{-\frac{t}{2\mu}}+e^{-\vartheta |\xi|^2t}).
        \end{equation}
\end{lem}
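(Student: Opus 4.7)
The plan is to exploit the explicit formula for $\hat{\mathcal{G}}$ given in the previous lemma together with the identities $\lambda_+ + \lambda_- = -\mu|\xi|^2$ and $\lambda_+\lambda_- = |\xi|^2$ coming from the characteristic equation $\lambda^2 + \mu|\xi|^2\lambda + |\xi|^2 = 0$. The qualitative behaviour of $\lambda_\pm$ changes at the critical frequency $|\xi| = 2/\mu$ where the discriminant $\mu^2|\xi|^4 - 4|\xi|^2$ vanishes, so I split the proof into a low frequency and a high frequency regime, choosing the threshold $R$ depending only on $\mu$. Since each $2\times 2$ block of $\hat{\mathcal{G}}$ is a scalar multiple of $I_{N\times N}$, the whole analysis reduces to estimating four scalar functions of $(\xi,t)$.

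For the low frequency part I pick $R$ so that $\{|\xi|\leq R\}$ sits inside the region where the discriminant is negative. There $\lambda_\pm = -\tfrac{\mu}{2}|\xi|^2 \pm i\beta(\xi)$ with $\beta(\xi) = |\xi|\sqrt{1-\mu^2|\xi|^2/4}$, and Euler's formula turns each entry of $\hat{\mathcal{G}}$ into $e^{-\mu|\xi|^2 t/2}$ times a combination of $\cos(\beta t)$, $\sin(\beta t)$ and $\sin(\beta t)/\beta$, all of which, together with the algebraic coefficients, are smooth bounded functions of $\xi$ on the compact ball $\{|\xi|\leq R\}$ (with $\sin(\beta t)/\beta$ extended by $t$ at $\beta = 0$). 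When a $\xi$-derivative lands on the Gaussian it costs a factor of order $|\xi| t$, which is absorbed by shrinking the decay rate from $\mu/2$ to some smaller $\vartheta$; when it lands on an oscillating factor it costs at most $(1+t)$ via the chain rule on $\beta t$; and when it lands on an algebraic coefficient it costs a bounded factor. Summing these contributions over $|\alpha|$ derivatives yields the claimed bound $C_{|\alpha|}e^{-\vartheta|\xi|^2 t}(1+|\xi|)^{|\alpha|}(1+t)^{|\alpha|}$.

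For the high frequency part I take $R > 2/\mu$ large enough that the rationalised formula $\lambda_+ = -2|\xi|^2/(\mu|\xi|^2 + \sqrt{\mu^2|\xi|^4 - 4|\xi|^2})$ exhibits $\lambda_+$ as a smooth bounded function of $\xi$ with $\lambda_+ = -1/\mu + O(|\xi|^{-2})$ and with each further $\xi$-derivative losing one more factor of $|\xi|^{-1}$; then $\lambda_- = -\mu|\xi|^2 - \lambda_+ = -\mu|\xi|^2 + 1/\mu + O(|\xi|^{-2})$ and $\lambda_+ - \lambda_- = \sqrt{\mu^2|\xi|^4 - 4|\xi|^2}\gtrsim |\xi|^2$. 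Substituting these asymptotics into the four blocks of $\hat{\mathcal{G}}$ gives: the $(c,c)$ block equals $e^{-t/\mu}$ plus a correction of size $O(|\xi|^{-2})(e^{-t/\mu}+e^{-\mu|\xi|^2 t})$; the $(v,v)$ block equals $e^{-\mu|\xi|^2 t}$ plus an analogous correction; and the off-diagonal blocks equal $O(|\xi|^{-1})(e^{-t/\mu}-e^{-\mu|\xi|^2 t})$. Grouping the two leading terms into the two diagonal matrices of the stated decomposition, the off-diagonals into $\hat{\mathcal{G}}^1$ and the diagonal corrections into $\hat{\mathcal{G}}^2$ produces the asserted structural identity, and the derivative bounds follow because every $\xi$-derivative of $\lambda_+$, of $1/(\lambda_+-\lambda_-)$ or of $|\xi|^{-k}$ contributes one factor of $|\xi|^{-1}$, while the polynomial growth in $t$ generated by differentiating $e^{\lambda_\pm t}$ is reabsorbed by replacing $e^{-t/\mu}$ by $e^{-t/(2\mu)}$ and $e^{-\mu|\xi|^2 t}$ by $e^{-\vartheta|\xi|^2 t}$ for a suitably small $\vartheta$, which is permitted once $R$ is large enough that $|\lambda_+ + 1/\mu|\leq 1/(2\mu)$.

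The main technical obstacle is the systematic bookkeeping for the high-frequency derivative bounds: one has to verify, for every multi-index $\alpha$, that the $|\xi|^{-1}$ factors produced by Leibniz and the chain rule applied to the rationalised expression for $\lambda_+$, to the denominator $\lambda_+-\lambda_-$ and to the remaining rational factors in $\hat{\mathcal{G}}$ accumulate to give precisely the stated decay $|\xi|^{-|\alpha|-1}$ and $|\xi|^{-|\alpha|-2}$, and that every extra polynomial-in-$t$ factor along the way can be absorbed into a slightly smaller exponential rate. Once this combinatorial accounting is organised, the result follows directly from the already explicit formulas without any further analytic input.
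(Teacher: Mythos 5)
Your overall strategy is exactly the one the paper intends: the paper gives no proof of this lemma at all, but simply invokes ``the similar arguments as that in \cite{Chen} (Lemma 4.1--4.2, Proposition 4.4)'', and those arguments are precisely what you reconstruct --- split at the frequency where the discriminant $\mu^2|\xi|^4-4|\xi|^2$ changes sign, represent the entries through the complex--conjugate roots at low frequency, and expand the real roots at high frequency. Your high-frequency half is correct and essentially complete: the rationalised formula for $\lambda_+$ gives $\lambda_+\le -1/\mu$, $|\lambda_+|\le 2/\mu$, $\lambda_-\le-\mu|\xi|^2/2$ and $\lambda_+-\lambda_-\ge \mu|\xi|^2/2$ once $R$ is large enough; the identity $-\lambda_-/(\lambda_+-\lambda_-)=1-\lambda_+/(\lambda_+-\lambda_-)$ produces the two leading diagonal exponentials plus $O(|\xi|^{-2})$ corrections; and the bookkeeping (each derivative of $\lambda_+$, of $(\lambda_+-\lambda_-)^{-1}$ or of $|\xi|^{-k}$ costs $|\xi|^{-1}$, polynomial $t$-factors absorbed into $e^{-t/(2\mu)}$ and $e^{-\vartheta|\xi|^2t}$) is the right one. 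One cosmetic point: the two diagonal corrections are \emph{different} scalar functions (their $t$-derivatives at $t=0$ are $1/\mu$ and $0$ respectively), so ``$\hat{\mathcal{G}}^2$ times the identity'' must be read, as you implicitly do, as ``each diagonal correction obeys the $\hat{\mathcal{G}}^2$ bound''.

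The low-frequency half, however, has a genuine gap: the claim that the entries are ``smooth bounded functions of $\xi$ on the compact ball'' is false for the off-diagonal blocks. Those blocks equal $-e^{-\mu|\xi|^2t/2}\,|\xi|\sin(\beta t)/\beta$ with $\beta=|\xi|\sqrt{1-\mu^2|\xi|^2/4}$, i.e.\ essentially $-\sin(|\xi|t)$ near the origin, and in dimension $N\ge2$ the function $\sin(|\xi|t)$ is Lipschitz but not twice differentiable at $\xi=0$: for $\xi=(\epsilon,0,\dots,0)$ one has $\partial_{\xi_2}^2\sin(|\xi|t)=t\cos(\epsilon t)/\epsilon$, which for fixed $t$ blows up as $\epsilon\to0$ and cannot be dominated by $C(1+t)^{2}e^{-\vartheta|\xi|^2t}$. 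So a uniform bound of the stated form on the whole ball is not provable for $|\alpha|\ge2$; the honest low-frequency estimate is that each $\xi$-derivative costs a factor of order $(1+t)\bigl(1+|\xi|^{-1}\bigr)$, i.e.\ the bound degenerates like $|\xi|^{-(|\alpha|-1)}$ at the origin. This is not a removable technicality: it is exactly why Proposition \ref{vis2-P4.2}(b) carries the constant $(1+b^{-N-1})$ that blows up as the inner radius $b$ of the annulus shrinks, and why these Green-matrix estimates are only ever applied on dyadic annuli $|\xi|\sim2^j$, where the extra negative powers of $|\xi|$ are harmless. Your proof (and, in fairness, the display as transcribed in the paper) should state and establish the low-frequency derivative estimate in this annulus-wise form, or with $(1+|\xi|^{-1})^{|\alpha|}$ in place of $(1+|\xi|)^{|\alpha|}$; as written, the smoothness step on which your argument rests fails at $\xi=0$.
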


\begin{prop}\label{vis2-P4.2}
  Let $\mathcal{C}$ be a ring centered at $0$ in $\mathbb{R}^N$. Then there exist positive constants $R_0$, $C$, $\vartheta$ depending on $\mu$ such that, if $\mathrm{supp}\hat{u}\subset \lambda \mathcal{C}$, then we have
    \begin{description}
      \item[(a)] if $\lambda \leq R_0$, then
        \begin{equation}
          \|\mathcal{G}*u\|_{L^2}\leq C e^{-\vartheta\lambda^2t}\|u\|_{L^2};
        \end{equation}
        \item[(b)] if $b\leq \lambda \leq R_0$, then for any $1\leq p\leq \infty$,
        \begin{equation}
          \|\mathcal{G}*u\|_{L^p}\leq C(1+b^{-N-1}) e^{-\vartheta\lambda^2t}\|u\|_{L^p};
        \end{equation}
        \item[(c)] if $\lambda >R_0$, then for any $1\leq p\leq \infty$,
        \begin{equation}
          \|\mathcal{G}^1*u\|_{L^p}\leq C \lambda^{-1}e^{-\vartheta t}\|u\|_{L^p},
        \end{equation}
                \begin{equation}
          \|\mathcal{G}^2*u\|_{L^p}\leq C \lambda^{-2}e^{-\vartheta t}\|u\|_{L^p}.
        \end{equation}
    \end{description}
\end{prop}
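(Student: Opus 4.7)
All three assertions follow the same template: represent $\mathcal{G}*u$ as convolution with a frequency-localized kernel and estimate either via Plancherel (for $p=2$) or via Young's inequality combined with an $L^{1}$ bound on that kernel (for general $p$). Choose $\tilde{\varphi}\in C^{\infty}_c(\mathbb{R}^N\setminus\{0\})$ with $\tilde{\varphi}\equiv 1$ on $\mathcal{C}$, and set
\[
K_{\lambda}(x,t):=\mathcal{F}^{-1}\bigl[\tilde{\varphi}(\lambda^{-1}\xi)\,\hat{\mathcal{G}}(\xi,t)\bigr](x),
\]
so that $\mathcal{G}*u=K_{\lambda}(\cdot,t)*u$ under the hypothesis $\mathrm{supp}\,\hat{u}\subset\lambda\mathcal{C}$. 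The task is thus to bound $K_{\lambda}$ (or the analogous kernels associated with $\mathcal{G}^{1},\mathcal{G}^{2}$) in the appropriate norm.

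Part (a) is immediate: on the support of $\tilde{\varphi}(\lambda^{-1}\cdot)$ we have $|\xi|\sim\lambda\leq R_{0}$, so the $\alpha=0$ case of the previous lemma gives $|\hat{\mathcal{G}}(\xi,t)|\leq Ce^{-\vartheta|\xi|^{2}t}\leq C'e^{-\vartheta'\lambda^{2}t}$, and Plancherel concludes. For part (b), I would use the standard weighted $L^{\infty}$ inequality
\[
\|K_{\lambda}(\cdot,t)\|_{L^{1}}\lesssim\bigl\|(1+|x|)^{N+1}K_{\lambda}(\cdot,t)\bigr\|_{L^{\infty}}\lesssim\sum_{|\alpha|\leq N+1}\bigl\|\partial^{\alpha}_{\xi}\bigl(\tilde{\varphi}(\lambda^{-1}\xi)\hat{\mathcal{G}}(\xi,t)\bigr)\bigr\|_{L^{1}_{\xi}},
\]
followed by Young's inequality. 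Each derivative falling on $\tilde{\varphi}(\lambda^{-1}\cdot)$ costs $\lambda^{-1}$, and each Fourier derivative hitting $\hat{\mathcal{G}}$ may behave like $|\xi|^{-1}$ near the origin because of the square-root singularity of $\lambda_{\pm}=-\tfrac{1}{2}\mu|\xi|^{2}\pm\tfrac{1}{2}\sqrt{\mu^{2}|\xi|^{4}-4|\xi|^{2}}$; combined with the lower bound $|\xi|\geq b$ on the support of $\tilde{\varphi}(\lambda^{-1}\cdot)$, this produces the $1+b^{-N-1}$ factor, while the Gaussian envelope $e^{-\vartheta|\xi|^{2}t}$ from the previous lemma, together with $|\xi|\sim\lambda$, gives the $e^{-\vartheta'\lambda^{2}t}$ decay (the polynomial-in-$t$ factors $(1+t)^{|\alpha|}$ are absorbed by slightly enlarging the constant, a cost that can itself be bundled into $1+b^{-N-1}$ since $\lambda\geq b$).

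For part (c) I would use the decomposition of $\hat{\mathcal{G}}$ from the previous lemma valid on $|\xi|\geq R$. The first diagonal piece is a constant multiplier scaled by $e^{-\mu^{-1}t}$, hence trivially bounded on $L^{p}$ with the desired decay, while the second piece $e^{-\mu|\xi|^{2}t}$ is the heat semigroup and standard Bernstein estimates yield $e^{-\vartheta\lambda^{2}t}\lesssim e^{-\vartheta t}$ on $\lambda>R_{0}$. The non-diagonal parts $\hat{\mathcal{G}}^{j}$, $j=1,2$, are treated by the same weighted $L^{\infty}$ route applied to their localized kernels; the bounds $|\partial^{\alpha}_{\xi}\hat{\mathcal{G}}^{j}|\lesssim|\xi|^{-|\alpha|-j}\bigl(e^{-t/(2\mu)}+e^{-\vartheta|\xi|^{2}t}\bigr)$ of the previous lemma directly translate into a $\lambda^{-j}$ prefactor (from the extra $|\xi|^{-j}$) and uniform $e^{-\vartheta t}$ decay on $\lambda>R_{0}$. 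The main obstacle is the combinatorial bookkeeping in (b): one must track precisely how the $N+1$ Fourier derivatives distribute between the cutoff $\tilde{\varphi}(\lambda^{-1}\cdot)$ and $\hat{\mathcal{G}}$ so that no worse singularity than $b^{-N-1}$ arises and the $e^{-\vartheta\lambda^{2}t}$ factor genuinely survives integration over an annulus of radius $\lambda$.
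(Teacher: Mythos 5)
Your overall scheme---localize the symbol on the annulus, use Plancherel for (a), and bound the frequency-localized kernel in $L^1$ followed by Young for (b) and (c)---is exactly the argument the paper relies on: the paper gives no proof of its own for this proposition, deferring to Chen--Miao--Zhang (Lemma 4.1--4.2, Proposition 4.4 there), whose proof is precisely this kernel-estimate scheme built on the two preceding lemmas. Parts (a) and (c) of your sketch are correct as written: for (a), $|\xi|\sim\lambda\le R_0$ puts you in the low-frequency regime of the symbol lemma and Plancherel finishes; for (c), the bounds $|\partial^\alpha_\xi\hat{\mathcal{G}}^j|\lesssim|\xi|^{-|\alpha|-j}\bigl(e^{-t/(2\mu)}+e^{-\vartheta|\xi|^2t}\bigr)$ give, after localization at $|\xi|\sim\lambda>R_0$, a kernel of $L^1$-norm $\lesssim\lambda^{-j}e^{-\vartheta t}$, since each $\xi$-derivative costs $|\xi|^{-1}\sim\lambda^{-1}$ and both exponentials are $\le e^{-ct}$ at high frequency.

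The one genuine problem is the bookkeeping in (b), and it is not where you locate it. The dangerous factors are not the $|\xi|^{-1}$ singularities but the time factors $(1+t)^{|\alpha|}$ in the low-frequency symbol estimate: these cannot be ``absorbed by slightly enlarging the constant,'' because absorbing $(1+t)^{k}$ against $e^{-\vartheta\lambda^2 t}$ costs $\lambda^{-2k}$, and $\lambda$ may be as small as $b$. With your fixed weight $(1+|x|)^{N+1}$, the worst Leibniz term (all $N+1$ derivatives on $\hat{\mathcal{G}}$) gives $\lambda^{N}\cdot\lambda^{-2(N+1)}=\lambda^{-N-2}$ after integrating over the annulus of volume $\sim\lambda^{N}$, i.e.\ a constant $1+b^{-N-2}$, strictly weaker than the stated $1+b^{-N-1}$. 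The fix, which is what the cited reference does, is to rescale first: substitute $\xi=\lambda\eta$ and use the scale-adapted weight $(1+\lambda|x|)^{N+1}$, noting $\int\lambda^{N}(1+\lambda|x|)^{-(N+1)}dx=C$ uniformly in $\lambda$. Then each $\eta$-derivative of $\hat{\mathcal{G}}(\lambda\eta,t)$ costs $\lambda(1+t)e^{-\vartheta\lambda^2t}\lesssim\lambda^{-1}\le b^{-1}$, the annulus in $\eta$ has unit volume, and the total constant is $(1+b^{-1})^{N+1}\sim 1+b^{-N-1}$, exactly as stated, with the Gaussian decay $e^{-\vartheta'\lambda^2t}$ surviving. (For the paper's subsequent use of this proposition the weaker power of $b^{-1}$ would in fact suffice, but as a proof of the statement as written your version of (b) falls short by one power.)
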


From Proposition \ref{vis2-P4.2}, or using the similar arguments as that in  \cite{Danchin2010} (lemma 2), we can obtain the following
lemma.
\begin{lem}\label{vis2-L4.1}
  Let $(v,c)$ be a solution of (\ref{vis2-E4.1-11}).
There exist two constants $\vartheta$ and $C$ such that, if
$2^j >R_0$, then for all $p\in[1,\infty]$,
    \begin{equation}
      \|\Delta_j v(t)\|_{L^p}\leq C\left(
      2^{-j}e^{-\vartheta t}\|\Delta_j c_0\|_{L^p}
      +\left(e^{-\vartheta t2^{2j}}+  2^{-2j}e^{-\vartheta t}
      \right)\|\Delta_j v_0\|_{L^p}
      \right),
    \end{equation}
 \begin{equation}
      \|\Delta_j c(t)\|_{L^p}\leq Ce^{-\vartheta t}\left(
      \|\Delta_j c_0\|_{L^p}
      + 2^{-j}\|\Delta_j v_0\|_{L^p}
      \right).
    \end{equation}
 For all $m\geq1$, there exist two constants $\alpha$ and $C$ such
 that, if $2^j \leq R_0$ then
     \begin{equation}
    e^{\vartheta t 2^{2j}}  \|(\Delta_j c,\Delta_j v)(t)\|_{L^2}
    \leq C
      \|(\Delta_j c_0,\Delta_j v_0)\|_{L^2}.
    \end{equation}
\end{lem}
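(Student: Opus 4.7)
The plan is to exploit the explicit Green matrix representation stated in the previous lemmas and reduce the claimed bounds to direct applications of Proposition \ref{vis2-P4.2}. Writing the solution of (\ref{vis2-E4.1-11}) as
\begin{equation*}
\binom{c(t)}{v(t)} = \mathcal{G}(t) * \binom{c_0}{v_0},
\end{equation*}
and localizing in frequency by applying $\Delta_j$, the problem becomes one of estimating $L^p$-norms of $\mathcal{G}(t) * \Delta_j f_0$ for spectrally localized data.

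For the high-frequency regime $2^j > R_0$, I would insert the splitting
\begin{equation*}
\hat{\mathcal{G}}(\xi,t) = e^{-\mu^{-1}t}\begin{pmatrix} I & 0 \\ 0 & 0 \end{pmatrix} + e^{-\mu|\xi|^2 t}\begin{pmatrix} 0 & 0 \\ 0 & I \end{pmatrix} + \hat{\mathcal{G}}^1(\xi,t)\begin{pmatrix} 0 & I \\ I & 0 \end{pmatrix} + \hat{\mathcal{G}}^2(\xi,t)\begin{pmatrix} I & 0 \\ 0 & I \end{pmatrix}
\end{equation*}
provided by the second lemma before Proposition \ref{vis2-P4.2}, and read off the $(2,1)$ and $(2,2)$ entries to control $\Delta_j v$. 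The pure heat-kernel piece $e^{-\mu|\xi|^2 t}$ acting on $\Delta_j v_0$ gives the standard Bernstein-type factor $e^{-\vartheta t 2^{2j}}\|\Delta_j v_0\|_{L^p}$; the contribution of $\hat{\mathcal{G}}^1$ paired with $\Delta_j c_0$ and of $\hat{\mathcal{G}}^2$ paired with $\Delta_j v_0$ are handled by parts (c) of Proposition \ref{vis2-P4.2}, yielding the factors $2^{-j}e^{-\vartheta t}$ and $2^{-2j}e^{-\vartheta t}$ respectively. Summing these three contributions produces the first inequality. The estimate for $\Delta_j c$ is obtained identically from the $(1,1)$ and $(1,2)$ entries, combining the $e^{-\mu^{-1}t}$ term (which bounds $\|\Delta_j c_0\|_{L^p}$ by $Ce^{-\vartheta t}\|\Delta_j c_0\|_{L^p}$ after adjusting $\vartheta$), the $\hat{\mathcal{G}}^2$ term (which contributes $C2^{-2j}e^{-\vartheta t}\|\Delta_j c_0\|_{L^p} \leq C R_0^{-2}e^{-\vartheta t}\|\Delta_j c_0\|_{L^p}$ since $2^j > R_0$), and the $\hat{\mathcal{G}}^1$ term acting on $\Delta_j v_0$.

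For the low-frequency regime $2^j \leq R_0$, both components are handled uniformly by part (a) of Proposition \ref{vis2-P4.2}: since $\mathrm{supp}\,\widehat{\Delta_j f_0}$ is contained in a ring of size $\sim 2^j \leq R_0$, one obtains
\begin{equation*}
\|\mathcal{G}(t) * \Delta_j f_0\|_{L^2} \leq C e^{-\vartheta 2^{2j} t}\|\Delta_j f_0\|_{L^2},
\end{equation*}
applied separately to the $c$ and $v$ rows of $\mathcal{G}$, after which multiplying through by $e^{\vartheta t 2^{2j}}$ yields the stated bound.

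The argument is essentially bookkeeping once the Green matrix decomposition is in place; the only mildly delicate point is ensuring that the $2^{-2j}$ factor from $\hat{\mathcal{G}}^2$ in the $c$-estimate does not spoil the bound — this is exactly where the high-frequency hypothesis $2^j > R_0$ is used, since it lets one absorb $2^{-2j} \leq R_0^{-2}$ into the universal constant $C$. Everything else reduces to the Bernstein-type decay furnished by Proposition \ref{vis2-P4.2}.
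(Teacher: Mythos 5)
Your proposal is correct and follows essentially the same route as the paper: the paper derives Lemma \ref{vis2-L4.1} directly from Proposition \ref{vis2-P4.2} together with the high-frequency decomposition of $\hat{\mathcal{G}}$ into the $e^{-\mu^{-1}t}$, $e^{-\mu|\xi|^2t}$, $\hat{\mathcal{G}}^1$ and $\hat{\mathcal{G}}^2$ blocks, which is exactly the bookkeeping you carry out entry by entry (including the absorption of the $2^{-2j}\leq R_0^{-2}$ factor into the constant for the $c$-estimate, and part (a) of Proposition \ref{vis2-P4.2} for the low frequencies).
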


\section{A priori estimates}\label{vis2-s5}
Assume that $v,E$  be a solution of (\ref{vis2-E1.4}). Letting
    \begin{equation}
      X_{p,0}=Y_{s,0}+Z_{p,0},
    \end{equation}
    \begin{equation}
      Y_{s,0}=\|E_0^l\|_{\dot{B}^s_{2,r}}
      +\|E_0^h\|_{\dot{B}^{s+1}_{2,r}}
      +\|v_0\|_{\dot{B}^s_{2,r}},
    \end{equation}
        \begin{equation}
      Z_{p,0}= \|E_0^h\|_{ \dot{B}^{\frac{N}{p}}_{p,1}}
      +\|v_0^h\|_{\dot{B}^{\frac{N}{p}-1}_{p,1}},
    \end{equation}
and
        \begin{equation}
      X_{p}(t)=Y_s(t)+Z_p(t),
    \end{equation}
        \begin{equation}
      Y_{s}(t)=\|E^l\|_{\widetilde{L}^2_t(\dot{B}^{s+1}_{2,r})\cap \widetilde{L}^1_t(\dot{B}^{s+2}_{2,r})}
      +\|E\|_{\widetilde{L}^\infty_t(\dot{B}^{s+1}_{2,r} )}
      +\|E^h\|_{\widetilde{L}^1_t(\dot{B}^{s+1}_{2,r} )}
            +\|v \|_{\widetilde{L}^\infty_t(\dot{B}^s_{2,r})\cap \widetilde{L}^1_t(\dot{B}^{s+2}_{2,r})},
    \end{equation}
        \begin{equation}
      Z_{p}(t)=
      \|E^h\|_{\widetilde{L}^\infty_t( \dot{B}^{\frac{N}{p}}_{p,1})}
      +\|E^h\|_{\widetilde{L}^1_t(  \dot{B}^{\frac{N}{p}}_{p,1})}
      +\|v^h\|_{\widetilde{L}^\infty_t(\dot{B}^{\frac{N}{p}-1}_{p,1})\cap \widetilde{L}^1_t(\dot{B}^{\frac{N}{p}+1}_{p,1})}.
    \end{equation}
    In this section, we will prove the following \textbf{Claim 1}:

There exist two positive constants $\lambda_1$ and $C_0$ such that,
if
    \begin{equation}
      X_{p_2}(t)\leq \lambda_1, \textrm{ for all }t\in[0,T],\label{vis2-E5.7}
    \end{equation}
then
        \begin{equation}
      X_{p_i}(t)\leq C_0  X_{p_i,0}, \textrm{ for all }i=1,2\ t\in[0,T].
    \end{equation}

    \begin{lem}\label{vis2-L5.1}
      Under the condition    (\ref{vis2-E5.7}), we have
         \begin{equation}
      Y_{s}(t)\leq C_1  Y_{s,0}, \textrm{ for all }\ t\in[0,T].
    \end{equation}
    \end{lem}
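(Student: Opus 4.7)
The plan is to apply Proposition \ref{vis2-P4.3} to the $(v,c)$-formulation (\ref{vis2-E1.14}) with $\rho=s$ and $u=v$, after rewriting the convection terms in paraproduct form. Using Bony's decomposition $v_k\partial_k w = T_{v_k}\partial_k w + T'_{\partial_k w}\,v_k$, system (\ref{vis2-E1.14}) fits the framework (\ref{vis2-E4.1}) with source terms
\begin{align*}
  G_i &= E_{jk}\partial_j E_{ik} - T'_{\nabla v_i}\,v,\\
  L &= \Lambda^{-1}\nabla\cdot(\nabla v\,E) - [\Lambda^{-1}\nabla\cdot,\,v\cdot]\nabla E - T'_{\nabla c}\,v.
\end{align*}
The smallness assumption (\ref{vis2-E5.7}), together with Bernstein's lemma and a low/high frequency split, bounds $\widetilde U(t)=\int_0^t\|\nabla v\|_{L^\infty}\,d\tau\lesssim X_{p_2}(t)\leq \lambda_1$, so the exponential prefactor $e^{C\widetilde U(t)}$ is $O(1)$.

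Next I would estimate each source in the norms prescribed by Proposition \ref{vis2-P4.3}: $(L,G)^l$ in $\widetilde L^1_t(\dot B^s_{2,r})$, $L^h$ in $\widetilde L^1_t(\dot B^{s+1}_{2,r})$, and $G^h$ in $\widetilde L^1_t(\dot B^s_{2,r})$. The paraproduct remainders $T'_{\nabla v}\,v$ and $T'_{\nabla c}\,v$ are controlled by Proposition \ref{vis2-P3.1-0}; the commutator $[\Lambda^{-1}\nabla\cdot,\,v\cdot]\nabla E$ by a Coifman--Meyer style estimate in Besov spaces (as in \cite{Danchin2010,Chen}); and the quadratic pieces $E\nabla E$ and $\Lambda^{-1}\nabla\cdot(\nabla v\,E)$ by decomposing each factor into low and high frequencies and applying Proposition \ref{vis2-P3.1} together with the hybrid-norm Lemma \ref{vis2-L3.2} (needed precisely for the $L^h$ bound in $\widetilde L^1_t(\dot B^{s+1}_{2,r})$). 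In each case one factor is absorbed into $X_{p_2}(t)$ via Besov embedding into $L^\infty$ and the other into $Y_s(t)$.

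To pass from the $(v,c)$-norms returned by Proposition \ref{vis2-P4.3} back to the $(v,E)$-quantity $Y_s(t)$, I would use: for high frequencies, $\Lambda^{-1}\nabla\cdot$ is a zero-order isomorphism so $\|E^h\|_{\dot B^{s+1}_{2,r}}\simeq\|c^h\|_{\dot B^{s+1}_{2,r}}$; for low frequencies, all Besov regularities are equivalent on the fixed window $2^k\leq R_0$, and the elliptic identity
$$\Delta E_{ij}=\Lambda\partial_j c_i+\partial_k\bigl(E_{lj}\partial_l E_{ik}-E_{lk}\partial_l E_{ij}\bigr),$$
obtained by combining $(\ref{vis2-E1.14})_4$ with Lemma \ref{vis2-L2.3}, recovers $E^l$ from $c^l$ up to a quadratic error controlled by $X_{p_2}(t)^2$. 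Interpolating $c^l\in\widetilde L^\infty_t(\dot B^s_{2,r})\cap\widetilde L^1_t(\dot B^{s+2}_{2,r})$ also supplies the $\widetilde L^2_t(\dot B^{s+1}_{2,r})$ component of $Y_s$. Collecting everything yields
$$Y_s(t)\leq C\bigl(Y_{s,0}+X_{p_2}(t)\,Y_s(t)\bigr),$$
and the smallness $X_{p_2}(t)\leq \lambda_1$ absorbs the nonlinear term on the left, producing $Y_s(t)\leq C_1 Y_{s,0}$ as desired.

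The main obstacle will be the careful high/low frequency bookkeeping required so that every bilinear source term has one factor purely in $X_{p_2}(t)$ and the other in $Y_s(t)$ with no loss of derivatives. This is delicate because Proposition \ref{vis2-P4.3} provides only damping (no smoothing) for $c^h$ in $\dot B^{s+1}_{2,r}$, while the non-local operator $\Lambda^{-1}\nabla\cdot$ appearing in both the commutator and the source $\Lambda^{-1}\nabla\cdot(\nabla v\,E)$ behaves badly at low frequencies; these terms must be split and analyzed with the hybrid estimate of Lemma \ref{vis2-L3.2} and the fine structure of Bony's decomposition in order to close the loop with the correct regularity indices.
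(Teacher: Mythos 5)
Your strategy is the same as the paper's: introduce $c=\Lambda^{-1}\nabla\cdot E$, paralinearize the convection so that (\ref{vis2-E1.14}) fits the frame (\ref{vis2-E4.1}), apply Proposition \ref{vis2-P4.3} with $\rho=s$, bound the sources by Propositions \ref{vis2-P3.1-0}--\ref{vis2-P3.1} and a commutator estimate so that each bilinear term contributes $\lambda_1 Y_s(t)$, and close by smallness. However, one step as written would fail: the claim that for high frequencies $\Lambda^{-1}\nabla\cdot$ is a zero-order isomorphism, so that $\|E^h\|_{\dot B^{s+1}_{2,r}}\simeq\|c^h\|_{\dot B^{s+1}_{2,r}}$. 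The map $E\mapsto\Lambda^{-1}\nabla\cdot E$ sends $N\times N$ matrix fields to $N$-component vector fields, so it is not injective on any frequency block: adding to $E$ any high-frequency matrix field whose rows are divergence-free leaves $c$ unchanged. The direction $\|E^h\|\lesssim\|c^h\|+\cdots$ is exactly what is needed to convert the output of Proposition \ref{vis2-P4.3} (which controls only $(v,c)$) into a bound on $Y_s(t)$, which contains $E$ itself, so it cannot be obtained from the definition of $c$ alone.

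The repair is already contained in your own low-frequency argument, and it is what the paper does uniformly in frequency: by Lemma \ref{vis2-L2.3} (identity (\ref{vis2-E7.1})) the row-wise curl of $E$ is quadratic, $\partial_k E_{ij}-\partial_j E_{ik}=\partial_l(E_{lj}E_{ik}-E_{lk}E_{ij})$, which combined with $(\ref{vis2-E1.14})_4$ gives
\begin{equation*}
E_{ij}=\Delta^{-1}\Lambda\partial_j c_i+\Delta^{-1}\partial_k\partial_l\bigl(E_{lj}E_{ik}-E_{lk}E_{ij}\bigr)
\end{equation*}
at \emph{all} frequencies; the quadratic remainder is then absorbed by the smallness $X_{p_2}(t)\leq\lambda_1$. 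Note also that the hybrid Lemma \ref{vis2-L3.2} is needed precisely here (to estimate $E^2$ in $\widetilde L^1_t$ of the hybrid space, $\dot B^{s+2}_{2,r}$ at low and $\dot B^{s+1}_{2,r}$ at high frequencies), rather than in the source estimates where you place it. With this change your proof reproduces the paper's two estimates (\ref{vis2-E5.28}) and (\ref{vis2-E5.30}) and closes exactly as you indicate.
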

    \begin{proof}
   Let
      $$
    c=\Lambda^{-1}{ \nabla\cdot E},
    $$ where $\Lambda^{s} f=  \mathcal{F}^{-1} (|\xi|^s \hat{f})$.
Then, the system (\ref{vis2-E1.4}) reads
    \begin{equation}
  \left\{\begin{array}{l}
    \nabla \cdot v=\nabla \cdot c=0, \ \ x\in \mathbb{R}^N,\ N\geq2,\\
        v_{it}+T_v\cdot \nabla v_i+\partial_i p-\mu\Delta v_i-\Lambda c_i=G_i:=-\partial_jT'_{  v_i}  v_j+ E_{jk}\partial_j E_{ik},\\
            c_t+T_v\cdot \nabla c +\Lambda v=L:=-\partial_jT'_{  c} v_j+\Lambda^{-1} \nabla\cdot(\nabla vE)
            -[\Lambda^{-1}\nabla\cdot, v\cdot ]\nabla E,\\
                     \Delta E_{ij}= \Lambda\partial_j c_i+\partial_k(\partial_k E_{ij}-\partial_jE_{ik }),\\
                (v,c)(0,x)=(v_0,\Lambda^{-1}\nabla\cdot E_0)(x).
  \end{array}
  \right.\label{vis2-E5.5}
\end{equation}
From Proposition \ref{vis2-P4.3}, we have
     \begin{eqnarray}
     &&\|(v,c)^l \|_{\widetilde{L}^\infty_t(\dot{B}^s_{2,r})\cap
     \widetilde{L}^1_t(\dot{B}^{s+2}_{2,r})}
     +\|c^h\|_{\widetilde{L}^\infty_t(\dot{B}^{s+1}_{2,r})\cap
     \widetilde{L}^1_t(\dot{B}^{s+1}_{2,r})}
     +\|v^h\|_{\widetilde{L}^\infty_t(\dot{B}^{s}_{2,r})\cap
     \widetilde{L}^1_t(\dot{B}^{s+2}_{2,r})}\nonumber\\
     &\leq& Ce^{C\widetilde{U}(t)}\big(
      \|(v_0,c_0)^l \|_{\dot{B}^s_{2,r}}
      +\|c_0^h \|_{\dot{B}^{s+1}_{2,r}}
            +\|v_0^h \|_{\dot{B}^{s}_{2,r}}
            +\|L^l
            \|_{\widetilde{L}^1_t(\dot{B}^{s}_{2,r})}\nonumber\\
            &&
     +\|L^h \|_{\widetilde{L}^1_t(\dot{B}^{s+1}_{2,r})}
          +\|G\|_{\widetilde{L}^1_t(\dot{B}^{s}_{2,r})}
      \big)\nonumber\\
     &\leq& C \big(
      Y_{s,0}      +\|L^l
            \|_{\widetilde{L}^1_t(\dot{B}^{s}_{2,r})}
                 +\|L^h \|_{\widetilde{L}^1_t(\dot{B}^{s+1}_{2,r})}
          +\|G \|_{\widetilde{L}^1_t(\dot{B}^{s}_{2,r})}
      \big),\label{vis2-E5.10-1}
    \end{eqnarray}
where $\widetilde{U}(t)=\int^t_0\|\nabla v(\tau)\|_{L^{\infty}}d\tau\leq CX_{p_2}(t)\leq C$, when $\lambda_1\leq 1$.
From Proposition \ref{vis2-P3.1-0} and $s>-1$, we have, for all $t\in[0,T]$,
    \begin{eqnarray}
      \|\partial_jT'_{v_i}v_j\|_{\widetilde{L}^1_t(\dot{B}^{s }_{2,r})}
      &\leq& \|T _{\partial_j v_i}v_j\|_{\widetilde{L}^1_t(\dot{B}^{s }_{2,r})}
      +\|R( v_i,v_j)\|_{\widetilde{L}^1_t(\dot{B}^{s+1 }_{2,r})}\nonumber\\
      &      \leq& C\|  v\|_{L^1_t(\dot{B}^{\frac{N}{p_2}+1}_{p_2,1})}\|v\|_{\widetilde{L}^\infty_t(
      \dot{B}^{s}_{2,r})}
       \leq C\lambda_1\|v\|_{\widetilde{L}^\infty_t(
      \dot{B}^{s}_{2,r})}.\label{vis2-E5.11}
    \end{eqnarray}
From (\ref{vis2-E3.1}) and $\nabla\cdot E^\top=0$, we have  under the assumption that $s+1>0$, $s+1<\frac{N}{2}$ (if $r>1$) or $s+1\leq\frac{N}{2}$ (if $r=1$),
\begin{equation}
      \|E_{jk}\partial_j E_{ik}\|_{\widetilde{L}^1_t(\dot{B}^{s }_{2,r})}= \|\partial_j (E_{jk} E_{ik})\|_{\widetilde{L}^1_t(\dot{B}^{s }_{2,r})}
      \leq C\|E\|_{L^2_t(L^\infty)}\|E\|_{\widetilde{L}^2_t(
      \dot{B}^{s+1}_{2,r})}
       \leq C\lambda_1\|E\|_{\widetilde{L}^2_t(\dot{B}^{s+1}_{2,r})}.\label{vis2-E5.12}
    \end{equation}
From (\ref{vis2-E5.11})--(\ref{vis2-E5.12}), we have
            \begin{equation}
      \| G\|_{\widetilde{L}^1_t(\dot{B}^{s }_{2,r})}
       \leq C\lambda_1(\|v\|_{\widetilde{L}^\infty_t(
      \dot{B}^{s}_{2,r})}
      +\|E\|_{\widetilde{L}^2_t(\dot{B}^{s+1}_{2,r})}).\label{vis2-E5.13}
    \end{equation}
From Proposition \ref{vis2-P3.1-0}, under the assumption that $s+1>0$, $s+1<\frac{N}{2}$ (if $r>1$) or $s+1\leq\frac{N}{2}$ (if $r=1$), we have, for all $t\in[0,T]$,
    \begin{equation}
      \|(\partial_jT'_cv_j)^l\|_{\widetilde{L}^1_t(\dot{B}^s_{2,r})}
      \leq C\|( T'_cv)^l\|_{\widetilde{L}^1_t(\dot{B}^{s+1}_{2,r})}
      \leq C\|c\|_{L^2_t(L^\infty)}\|v\|_{\widetilde{L}^2_t(\dot{B}^{s+1}_{2,r})}
      \leq C\lambda_1\|v\|_{\widetilde{L}^2_t(\dot{B}^{s+1}_{2,r})},\label{vis2-E5.14}
    \end{equation}
        \begin{equation}
      \|(\partial_jT'_cv_j)^h\|_{\widetilde{L}^1_t(\dot{B}^{s+1}_{2,r})}
      \leq C\|c\|_{L^\infty_t(L^\infty)}\|v\|_{\widetilde{L}^1_t(\dot{B}^{s+2}_{2,r})}
       \leq C\lambda_1\|v\|_{\widetilde{L}^1_t(\dot{B}^{s+2}_{2,r})}.\label{vis2-E5.15}
    \end{equation}
where we use the following estimates
    $$
    \|c\|_{L^\infty}\leq \|c^l\|_{L^\infty}+\|c^h\|_{L^\infty}
    \leq C\left(
    \|c^l\|_{\dot{B}^\frac{N}{2}_{2,1}}+\|c^h\|_{\dot{B}^\frac{N}{p_2}_{p_2,1}}
    \right)\leq C\left(
    \|E^l\|_{\dot{B}^\frac{N}{2}_{2,1}}+\|E^h\|_{\dot{B}^\frac{N}{p_2}_{p_2,1}}
    \right)
        $$
        and
            $$
            \|c\|_{L^2_t(L^\infty)}+\|c\|_{L^\infty_t(L^\infty)}\leq CX_{p_2}(t)\leq C\lambda_1.
            $$
Similarly, from Proposition \ref{vis2-P3.1-0}, we have,
    \begin{equation}
    \|( T_E\nabla v)^l\|_{\widetilde{L}^1_t(\dot{B}^s_{2,r})}
    \leq C\|E\|_{L^2_t(L^\infty)}
    \|\nabla v\|_{\widetilde{L}^2_t(\dot{B}^s_{2,r})}
     \leq C \lambda_1
    \|\nabla v\|_{\widetilde{L}^2_t(\dot{B}^s_{2,r})},\label{vis2-E5.16}
    \end{equation}
    \begin{equation}
    \|( T_E\nabla v)^h\|_{\widetilde{L}^1_t(\dot{B}^{s+1}_{2,r})}
    \leq C\|E\|_{L^\infty_t(L^\infty)}
    \|\nabla v\|_{\widetilde{L}^1_t(\dot{B}^{s+1}_{2,r})}\leq C \lambda_1
    \|\nabla v\|_{\widetilde{L}^1_t(\dot{B}^{s+1}_{2,r})},
    \end{equation}
    \begin{eqnarray}
    \|( T_{\nabla v}E)^l\|_{\widetilde{L}^1_t(\dot{B}^s_{2,r})}
    &\leq& C\|\nabla v\|_{L^2_t(\dot{B}^{-1}_{\infty,\infty})}
    \|E\|_{\widetilde{L}^2_t(\dot{B}^{s+1}_{2,r})}\nonumber\\
    &\leq& C\lambda_1
    \|E\|_{\widetilde{L}^2_t(\dot{B}^{s+1}_{2,r})},
    \end{eqnarray}
     \begin{eqnarray}
    \|( T_{\nabla v}E)^h\|_{\widetilde{L}^1_t(\dot{B}^{s+1}_{2,r})}
    &\leq& C\|\nabla v\|_{L^1_t(L^{\infty})}
    \|E\|_{\widetilde{L}^\infty_t(\dot{B}^{s+1}_{2,r})}\nonumber\\
    &\leq& C\lambda_1
   ( \|E^l\|_{\widetilde{L}^\infty_t(\dot{B}^{s}_{2,r})}+\|E^h\|_{\widetilde{L}^\infty_t(\dot{B}^{s+1}_{2,r})}),
    \end{eqnarray}
      \begin{eqnarray}
    \|( R( E_{jk},\partial_j v_i))^l\|_{\widetilde{L}^1_t(\dot{B}^s_{2,r})}
    &=&\|\partial_j( R( E_{jk}, v_i))^l\|_{\widetilde{L}^1_t(\dot{B}^s_{2,r})}\nonumber\\
    &\leq&  \|( R( E, v))^l\|_{\widetilde{L}^1_t(\dot{B}^{s+\frac{N}{p_2}+1}_{\frac{2p_2}{2+p_2},r})}\nonumber\\
    &\leq&
    C\|  v\|_{L^2_t(\dot{B}^{\frac{N}{p_2}}_{p_2, \infty})}
    \|E\|_{\widetilde{L}^2_t(\dot{B}^{s+1}_{2,r})}\nonumber\\
    &\leq& C\lambda_1
    \|E\|_{\widetilde{L}^2_t(\dot{B}^{s+1}_{2,r})},
    \end{eqnarray}
     \begin{eqnarray}
    \|( R( E,\nabla v))^h\|_{\widetilde{L}^1_t(\dot{B}^{s+1}_{2,r})}
    &\leq&  \|( R( E,\nabla v))^l\|_{\widetilde{L}^1_t(\dot{B}^{s+\frac{N}{p_2}+1}_{\frac{2p_2}{2+p_2},r})}\nonumber\\
    &\leq&
    C\|\nabla v\|_{L^1_t(\dot{B}^{\frac{N}{p_2}}_{p_2, \infty})}
    \|E\|_{\widetilde{L}^\infty_t(\dot{B}^{s+1}_{2,r})}\nonumber\\
    &\leq& C\lambda_1
   ( \|E^l\|_{\widetilde{L}^\infty_t(\dot{B}^{s}_{2,r})}
   +\|E^h\|_{\widetilde{L}^\infty_t(\dot{B}^{s+1}_{2,r})}),\label{vis2-E5.21}
    \end{eqnarray}
    where we use the following estimates
        $$
        \|\nabla v\|_{L^2_t(\dot{B}^{-1}_{\infty,\infty})}
        \leq C\|\nabla v\|_{L^2_t(\dot{B}^{\frac{N}{p_2}-1}_{p_2,1})}\leq CX_{p_2}(t)\leq C\lambda_1,
        $$
        $$
        \|\nabla v\|_{L^1_t(L^{\infty})}
        \leq C\|\nabla v\|_{L^1_t(\dot{B}^{\frac{N}{p_2}}_{p_2,1})}\leq CX_{p_2}(t)\leq C\lambda_1.
        $$
From (\ref{vis2-E5.16})--(\ref{vis2-E5.21}), we obtain
             \begin{equation}
    \|( \Lambda^{-1} \nabla\cdot(\nabla vE))^l\|_{\widetilde{L}^1_t(\dot{B}^s_{2,r})}
    \leq  C \lambda_1
   ( \|\nabla v\|_{\widetilde{L}^2_t(\dot{B}^s_{2,r})}+ \|E\|_{\widetilde{L}^2_t(\dot{B}^{s+1}_{2,r})}),\label{vis2-E5.22}
    \end{equation}
     \begin{equation}
    \|( \Lambda^{-1} \nabla\cdot(\nabla vE))^h\|_{\widetilde{L}^1_t(\dot{B}^{s+1}_{2,r})}
    \leq  C \lambda_1
   ( \|\nabla v\|_{\widetilde{L}^1_t(\dot{B}^{s+1}_{2,r})}+ \|E^l\|_{\widetilde{L}^\infty_t(\dot{B}^{s}_{2,r})}+\|E^h\|_{\widetilde{L}^\infty_t(\dot{B}^{s+1}_{2,r})}).
    \end{equation}
    Similar, using the classical commutator estimate (Lemma 2.97 in \cite{Bahouri}), we obtain
      \begin{equation}
    \|( [\Lambda^{-1}\nabla\cdot, v\cdot ]\nabla E)^l\|_{\widetilde{L}^1_t(\dot{B}^s_{2,r})}
    \leq  C \lambda_1
   ( \|\nabla v\|_{\widetilde{L}^2_t(\dot{B}^s_{2,r})}+ \|E\|_{\widetilde{L}^2_t(\dot{B}^{s+1}_{2,r})}),
    \end{equation}
     \begin{equation}
    \|( [\Lambda^{-1}\nabla\cdot, v\cdot ]\nabla E)^h\|_{\widetilde{L}^1_t(\dot{B}^{s+1}_{2,r})}
    \leq  C \lambda_1
   ( \|\nabla v\|_{\widetilde{L}^1_t(\dot{B}^{s+1}_{2,r})}+ \|E^l\|_{\widetilde{L}^\infty_t(\dot{B}^{s}_{2,r})}
   +\|E^h\|_{\widetilde{L}^\infty_t(\dot{B}^{s+1}_{2,r})}).\label{vis2-E5.25}
    \end{equation}
From (\ref{vis2-E5.14})--(\ref{vis2-E5.15}) and (\ref{vis2-E5.22})--(\ref{vis2-E5.25}),  we get
    \begin{equation}
    \|L^l\|_{\widetilde{L}^1_t(\dot{B}^s_{2,r})}
    \leq  C \lambda_1
   ( \|  v\|_{\widetilde{L}^2_t(\dot{B}^{s+1}_{2,r})}+ \|E\|_{\widetilde{L}^2_t(\dot{B}^{s+1}_{2,r})}),\label{vis2-E5.26}
    \end{equation}
     \begin{equation}
    \|L^h\|_{\widetilde{L}^1_t(\dot{B}^{s+1}_{2,r})}
    \leq  C \lambda_1
   ( \|  v\|_{\widetilde{L}^1_t(\dot{B}^{s+2}_{2,r})}+ \|E^l\|_{\widetilde{L}^\infty_t(\dot{B}^{s}_{2,r})}
   +\|E^h\|_{\widetilde{L}^\infty_t(\dot{B}^{s+1}_{2,r})}).\label{vis2-E5.27}
    \end{equation}

From (\ref{vis2-E5.10-1}), (\ref{vis2-E5.13}) and (\ref{vis2-E5.26})--(\ref{vis2-E5.27}), we have
    \begin{eqnarray}
     &&\|(v,c)^l \|_{\widetilde{L}^\infty_t(\dot{B}^s_{2,r})\cap
     \widetilde{L}^1_t(\dot{B}^{s+2}_{2,r})}
     +\|c^h\|_{\widetilde{L}^\infty_t(\dot{B}^{s+1}_{2,r})\cap
     \widetilde{L}^1_t(\dot{B}^{s+1}_{2,r})}
     +\|v^h\|_{\widetilde{L}^\infty_t(\dot{B}^{s}_{2,r})\cap
     \widetilde{L}^1_t(\dot{B}^{s+2}_{2,r})}\nonumber\\
          &\leq& C
      Y_{s,0}      +C\lambda_1 Y_s(t).\label{vis2-E5.28}
    \end{eqnarray}
    From Lemmas \ref{vis2-L2.2}--\ref{vis2-L2.3}, we have
    \begin{equation}
    \partial_m E_{ij}-\partial_jE_{im}
       =E_{lj}\partial_lE_{im}-E_{lm}\partial_lE_{ij}
       =\partial_l(E_{lj}E_{im}-E_{lm}E_{ij}).\label{vis2-E7.1}
  \end{equation}
  and from (\ref{vis2-E5.5})$_4$, Proposition \ref{vis2-P3.1}, Lemma \ref{vis2-L3.2},
  \begin{eqnarray*}
   &&\|E \|_{\widetilde{L}^\infty_t(\dot{B}^{s+1}_{2,r})}
   +\|E^l \|_{\widetilde{L}^2_t(\dot{B}^{s+1}_{2,r})\cap
     \widetilde{L}^1_t(\dot{B}^{s+2}_{2,r})}
     +\|E^h \|_{
     \widetilde{L}^1_t(\dot{B}^{s+1}_{2,r})}\\
     &\leq&
   C \|c \|_{\widetilde{L}^\infty_t(\dot{B}^{s+1}_{2,r})}
   +C\|c^l \|_{\widetilde{L}^2_t(\dot{B}^{s+1}_{2,r})\cap
     \widetilde{L}^1_t(\dot{B}^{s+2}_{2,r})}
     +C\|c^h \|_{
     \widetilde{L}^1_t(\dot{B}^{s+1}_{2,r})}\\&&
     +C \|E^2 \|_{\widetilde{L}^\infty_t(\dot{B}^{s+1}_{2,r})}
   +C\|(E^2)^l \|_{\widetilde{L}^2_t(\dot{B}^{s+1}_{2,r})\cap
     \widetilde{L}^1_t(\dot{B}^{s+2}_{2,r})}
     +C\|(E^2)^h \|_{
     \widetilde{L}^1_t(\dot{B}^{s+1}_{2,r})}\\
      &\leq& C \|c \|_{\widetilde{L}^\infty_t(\dot{B}^{s+1}_{2,r})}
   +C\|c^l \|_{\widetilde{L}^2_t(\dot{B}^{s+1}_{2,r})\cap
     \widetilde{L}^1_t(\dot{B}^{s+2}_{2,r})}
     +C\|c^h \|_{
     \widetilde{L}^1_t(\dot{B}^{s+1}_{2,r})}\\
   &&  +C \|E  \|_{L^\infty_t(\dot{B}^\frac{N}{p_2}_{p_2,1})}
   (  \|E \|_{\widetilde{L}^\infty_t(\dot{B}^{s+1}_{2,r})}
   + \|E^l \|_{\widetilde{L}^2_t(\dot{B}^{s+1}_{2,r})\cap
     \widetilde{L}^1_t(\dot{B}^{s+2}_{2,r})}
     + \|E^h \|_{
     \widetilde{L}^1_t(\dot{B}^{s+1}_{2,r})})   \\
      &\leq&  C \|c \|_{\widetilde{L}^\infty_t(\dot{B}^{s+1}_{2,r})}
   +C\|c^l \|_{\widetilde{L}^2_t(\dot{B}^{s+1}_{2,r})\cap
     \widetilde{L}^1_t(\dot{B}^{s+2}_{2,r})}
     +C\|c^h \|_{
     \widetilde{L}^1_t(\dot{B}^{s+1}_{2,r})}\\
   &&  + C\lambda_1
      (  \|E \|_{\widetilde{L}^\infty_t(\dot{B}^{s+1}_{2,r})}
   + \|E^l \|_{\widetilde{L}^2_t(\dot{B}^{s+1}_{2,r})\cap
     \widetilde{L}^1_t(\dot{B}^{s+2}_{2,r})}
     + \|E^h \|_{
     \widetilde{L}^1_t(\dot{B}^{s+1}_{2,r})}) .
  \end{eqnarray*}
When $\lambda_1$ is small enough, $C\lambda_1<\frac{1}{2}$, we have
 \begin{eqnarray}
   &&\|E \|_{\widetilde{L}^\infty_t(\dot{B}^{s+1}_{2,r})}
   +\|E^l \|_{\widetilde{L}^2_t(\dot{B}^{s+1}_{2,r})\cap
     \widetilde{L}^1_t(\dot{B}^{s+2}_{2,r})}
     +\|E^h \|_{
     \widetilde{L}^1_t(\dot{B}^{s+1}_{2,r})}\nonumber\\
      &\leq&  C \|c \|_{\widetilde{L}^\infty_t(\dot{B}^{s+1}_{2,r})}
   +C\|c^l \|_{\widetilde{L}^2_t(\dot{B}^{s+1}_{2,r})\cap
     \widetilde{L}^1_t(\dot{B}^{s+2}_{2,r})}
     +C\|c^h \|_{
     \widetilde{L}^1_t(\dot{B}^{s+1}_{2,r})}.\label{vis2-E5.30}
  \end{eqnarray}
  From (\ref{vis2-E5.28}) and (\ref{vis2-E5.30}), we get
  \begin{equation*}
      Y_s(t)\leq C
      Y_{s,0}      +C\lambda_1 Y_s(t).
    \end{equation*}
   When $\lambda_1$ is small enough, $C\lambda_1<\frac{1}{2}$, we obtain
     $
      Y_s(t)\leq C
      Y_{s,0}     .
    $
  \end{proof}

    \begin{lem}\label{vis2-L5.2}
      Under the condition    (\ref{vis2-E5.7}), we have
         \begin{equation}
      Z_{p }(t)\leq C_2 ( Z_{p ,0}+ Y_{s,0}), \textrm{ for all }p=p_1,p_2\ t\in[0,T].   \label{vis2-E5.48-0}
    \end{equation}
    \end{lem}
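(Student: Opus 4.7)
The proof of Lemma \ref{vis2-L5.2} will follow the same template as Lemma \ref{vis2-L5.1}, but now working in the $L^p$--critical framework (with $p=p_1$ or $p_2$), focusing exclusively on the high--frequency parts. The plan is first to apply an $L^p$--analog of Proposition \ref{vis2-P4.3}, restricted to the blocks $2^k>R_0$, to the reformulated system (\ref{vis2-E5.5}). Such an estimate follows from Proposition \ref{vis2-P4.2}(c) combined with the paraproduct--convection argument of \cite{Danchin}: for $2^k>R_0$, the Green matrix acts like $e^{-\vartheta t}$ with a one-- or two--derivative gain, which furnishes on $v^h$ and $c^h$ the bounds
\begin{eqnarray*}
&&\|v^h\|_{\widetilde L^\infty_t(\dot B^{N/p-1}_{p,1})\cap\widetilde L^1_t(\dot B^{N/p+1}_{p,1})}
+\|c^h\|_{\widetilde L^\infty_t(\dot B^{N/p}_{p,1})\cap\widetilde L^1_t(\dot B^{N/p}_{p,1})}\\
&&\qquad\le Ce^{C\widetilde U(t)}\Bigl(Z_{p,0}+\|G^h\|_{\widetilde L^1_t(\dot B^{N/p-1}_{p,1})}+\|L^h\|_{\widetilde L^1_t(\dot B^{N/p}_{p,1})}\Bigr),
\end{eqnarray*}
where $\widetilde U(t)\le CX_{p_2}(t)\le C\lambda_1$ is harmless.

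Next I will estimate the nonlinear sources $G^h$ and $L^h$ via Bony's decomposition and Proposition \ref{vis2-P3.1}. For the paraproducts $\partial_j T'_{v_i}v_j$ and $\partial_j T'_c v_j$, one factor is placed in $L^\infty$ through the embedding $\dot B^{N/p_2}_{p_2,1}\hookrightarrow L^\infty$, so that $X_{p_2}(t)\le\lambda_1$ supplies the smallness while the remaining factor contributes $Z_p(t)$. For the Cauchy--Green term $E_{jk}\partial_j E_{ik}$, the divergence--free constraint from Lemma \ref{vis2-L2.2} allows the rewriting $\partial_j(E_{jk}E_{ik})$, so the product law in $\dot B^{N/p}_{p,1}$ yields a bound of the form $C\lambda_1\bigl(Y_s(t)+Z_p(t)\bigr)$, where the factor in $\dot B^{N/p_2}_{p_2,1}$ is small and the remaining factor may be in a low--regularity space involving $Y_s(t)$. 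The commutator $[\Lambda^{-1}\nabla\cdot,v\cdot]\nabla E$ and the term $\Lambda^{-1}\nabla\cdot(\nabla vE)$ are handled by the classical commutator estimate (Lemma 2.97 of \cite{Bahouri}) together with paraproduct bounds, again gaining a factor $\lambda_1$ and leaving $Y_s(t)+Z_p(t)$.

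To recover $E^h$ from $c^h$ I will invoke the elliptic identity $(\ref{vis2-E5.5})_4$; using Lemma \ref{vis2-L2.3} as in (\ref{vis2-E7.1}), the term $\partial_k(\partial_k E_{ij}-\partial_j E_{ik})$ can be written as a double divergence of a quadratic expression in $E$, which by Proposition \ref{vis2-P3.1} is controlled by $C\lambda_1$ times $E$--norms already part of $Z_p(t)$ and $Y_s(t)$. This yields
\begin{equation*}
\|E^h\|_{\widetilde L^\infty_t(\dot B^{N/p}_{p,1})}
+\|E^h\|_{\widetilde L^1_t(\dot B^{N/p}_{p,1})}
\le C\bigl(\|c^h\|_{\widetilde L^\infty_t(\dot B^{N/p}_{p,1})}+\|c^h\|_{\widetilde L^1_t(\dot B^{N/p}_{p,1})}\bigr)+C\lambda_1\bigl(Z_p(t)+Y_s(t)\bigr).
\end{equation*}
Combining the previous displays gives $Z_p(t)\le C(Z_{p,0}+\lambda_1 Y_s(t))+C\lambda_1 Z_p(t)$; invoking Lemma \ref{vis2-L5.1} to absorb $Y_s(t)\le C_1Y_{s,0}$ and choosing $\lambda_1$ so small that $C\lambda_1<1/2$, the last term is moved to the left--hand side and (\ref{vis2-E5.48-0}) follows.

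The main obstacle will be the mismatch between the low--frequency $L^2$--framework (controlled by $Y_s$) and the high--frequency $L^{p_2}$--framework (controlled by $Z_{p_2}$): when a product law or commutator estimate is applied, one often needs to split each factor into its low and high frequencies, use Bernstein's inequality to convert $\dot B^{N/p}_{p,1}$ bounds on low frequencies into $\dot B^{s+1}_{2,r}$ bounds (and vice--versa for high frequencies), and verify that the resulting combination is bounded by $\lambda_1\bigl(Y_s(t)+Z_p(t)\bigr)$ uniformly in $p\in\{p_1,p_2\}$. This requires a careful bookkeeping of the admissible ranges of $s$, $r$, $p_1$, $p_2$ stated in Theorem \ref{vis2-T1.2}, and is the technical heart of the estimate; everything else is a direct adaptation of the $L^2$ arguments in Lemma \ref{vis2-L5.1}.
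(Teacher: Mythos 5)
There is a genuine gap at the very first step of your proposal, and it is precisely the technical heart of the lemma. You assert that an $L^p$-analog of Proposition \ref{vis2-P4.3} --- i.e.\ a high-frequency estimate in $\dot B^{N/p}_{p,1}$ for the system \emph{with the convection terms included} --- ``follows from Proposition \ref{vis2-P4.2}(c) combined with the paraproduct--convection argument of \cite{Danchin}''. It does not. Proposition \ref{vis2-P4.3} is proved by an energy method (note the paper's remark that $\int v\cdot\nabla p=\int c\cdot\nabla p=0$), which relies essentially on the $L^2$ structure: the contribution of the convection terms is controlled after integration by parts, a cancellation that has no counterpart in $L^p$ for $p\neq 2$. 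Proposition \ref{vis2-P4.2}, on the other hand, concerns the Green matrix of the \emph{constant-coefficient} system (\ref{vis2-E4.1-11}), without convection. If you instead try to treat the convection of $c$ perturbatively as a Duhamel source, you lose a derivative: the source must be bounded in $\widetilde L^1_t(\dot B^{N/p}_{p,1})$, which requires control of $\|c^h\|_{\widetilde L^1_t(\dot B^{N/p+1}_{p,1})}$, whereas the $e^{-\vartheta t}$ damping at high frequencies only yields $\|c^h\|_{\widetilde L^1_t(\dot B^{N/p}_{p,1})}$ (the transported quantity has no smoothing). So neither route you invoke produces the linear estimate your whole argument rests on.

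The paper fills exactly this gap by a different mechanism, which is absent from your proposal. It first applies $\Delta_q$ to the system written in terms of $E$ (not $c$, since $E$ is the variable satisfying a genuine transport equation), obtaining a system for $(v_q,E_q)$ convected by $S_{q-1}v$; it then performs a \emph{Lagrangian change of variables} along the flow $\psi_q$ of $S_{q-1}v$, which eliminates the convection at the price of remainder terms $R^1_q,R^2_q,R^3_q$; only then is $\widetilde c_q=\Lambda^{-1}\nabla\cdot\widetilde E_q$ introduced and the Green-matrix estimate of Lemma \ref{vis2-L4.1} applied via Duhamel. The remainders are controlled by flow estimates such as $\|\nabla\psi_q^{\pm1}-Id\|_{L^\infty}\leq C\lambda_1$ together with Vishik's trick, and --- crucially --- the loss of exact frequency localization under composition with $\psi_q^{\pm1}$ is repaired by the decomposition $v_q=S_{q-N_0}\widetilde v_q\circ\phi_q+\sum_{j\geq q-N_0}\Delta_j\widetilde v_q\circ\phi_q$, Lemma 2.3 of \cite{Chen}, and a choice of $N_0$ making the bad terms absorbable. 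Without this machinery (or some substitute for handling convection in the $L^p$ framework) your argument cannot be completed. Your later steps --- recovering $E^h$ from $c^h$ via (\ref{vis2-E5.5})$_4$ and (\ref{vis2-E7.1}), the product estimates for $L^h$ and $G^h$, and the final absorption under $C\lambda_1<1/2$ --- do agree with the paper's, but they all rest on the unproved linear estimate.
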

    \begin{proof}
    Applying operator $\Delta_q$ to  (\ref{vis2-E1.4}), we obtain the following system for $(v_q,E_q):=(\Delta_q v,\Delta_q E)$,
\begin{equation}
  \left\{\begin{array}{l}
    \nabla \cdot v_q=0, \ \ x\in \mathbb{R}^N,\ N\geq 2,\\
        \partial_tv_{q,i}+S_{q-1}v\cdot \nabla v_{q,i}-\mu\Delta v_{q,i}-\partial_jE_{q,ij}=G_{q,i}\\:=S_{q-1}v\cdot\nabla v_{q,i}-\Delta_q(T_v\cdot\nabla v_i)+\Delta_q(-T'_{\nabla v_i}\cdot v-\partial_i p+ E_{jk}\partial_j E_{ik}),\\
           \partial_t E_{q}+S_{q-1}v\cdot \nabla E_q-\nabla v_q=L_q\\
            :=\partial_i(S_{q-1}v_i E_q-\Delta_q T_{v_i}E) -\Delta_q\partial_i T'_{E}v_i+\Delta_q(\nabla vE),\\
                (v_q,E_q)(0,x)=(\Delta_qv_0,\Delta_qE_0)(x).
  \end{array}
  \right.
\end{equation}

Since
$$L_q=\Delta_q L+\partial_i\left(
\sum_{q'\sim q}(S_{q-1}v_i-S_{q'-1}v_i)\Delta_q\Delta_{q'} E+[S_{q'-1}v_i,\Delta_q]\Delta_{q'}E
\right),$$
where $L:=\nabla vE-\partial_iT'_{E}v_i$, using Lemma \ref{vis2-L3.1} and  the classical commutator estimate (Lemma 2.97 in \cite{Bahouri}), we have
\begin{equation}
  \|\nabla L_q\|_{L^p}\leq \|\nabla\Delta_q L\|_{L^p}+C\|\nabla v\|_{L^\infty}
  \sum_{q'\sim q}\|\nabla \Delta_{q'}E\|_{L^p}.\label{vis2-E5.34-1}
\end{equation}
Similar, we get
    \begin{equation}
  \|G_q\|_{L^p}\leq \| \Delta_q G\|_{L^p}+C\|\nabla v\|_{L^\infty}
  \sum_{q'\sim q}\|  \Delta_{q'}v\|_{L^p},\label{vis2-E5.35-1}
\end{equation}
where $G:=-T'_{\nabla v_i}\cdot v-\partial_i p+ E_{jk}\partial_j E_{ik}$.

In order to handle the convection terms, one may perform the Lagrangian change of variable $(\tau,x)=(t,\psi_q(t,y))$, where $\psi_q$ stands for the flow of $S_{q-1}v$. Let $\phi_q:=\psi_q^{-1}$, $\widetilde{f}_q=f_q\circ\psi_q$. Then, $(\widetilde{v}_q,\widetilde{E}_q):=(v_q\circ\psi_q,E_q\circ\psi_q)$ satisfies
     \begin{equation}
  \left\{\begin{array}{l}
    \nabla \cdot v_q=0, \ \ x\in \mathbb{R}^N,\ N\geq 2,\\
       \partial_t \widetilde{v}_{q }-\mu\Delta \widetilde{v}_{q }-\nabla\cdot\widetilde{E}_{q}
        =\widetilde{G}_q+R^1_q+R^2_q,\\
           \partial_t \widetilde{E}_{q}-\nabla \widetilde{v}_q=\widetilde{L}_q+R^3_q,\\
                (\widetilde{v}_q,\widetilde{E}_q)(0,x)
                =(\Delta_qv_0,\Delta_qE_0)(x),
  \end{array}
  \right.\label{vis2-E5.34}
\end{equation}
where
    $$
        R^1_{q,i}(t,x):=\partial_k\widetilde{E}_{q,ij}(t,x)(\partial_j \phi_{q,k}(t,\psi_q(t,x))-\delta_{jk}),
    $$
        \begin{eqnarray*}
          R^2_{q,i}&:=&\mu
                 (\partial_j \phi_{q,k}(t,\psi_q(t,x))-\delta_{jk})\partial_k\partial_l\widetilde{v}_{q,i}(t,x)\cdot \partial_j\phi_{q,l}(t,\psi_q(t,x))\\
          &&       + \mu \partial_j\partial_l\widetilde{v}_{q,i}(t,x) (\partial_j\phi_{q,l}(t,\psi_q(t,x))-\delta_{lj})         \\
            &&+ \mu\partial_j  \widetilde{v}_{q,i}(t,x) \Delta\phi_{q,j}(t,\psi_q(t,x)).
        \end{eqnarray*}
            $$
            R^3_{q,jk}:=\partial_l  \widetilde{v}_{qj}(t,x)\cdot (\partial_k\phi_{q,l}(t,\psi_q(t,x))-\delta_{lk}).
            $$
 Let
      $$
    \widetilde{c}_q=\Lambda^{-1}{ \nabla\cdot \widetilde{E}_q},
    $$ where $\Lambda^{s} f=  \mathcal{F}^{-1} (|\xi|^s \hat{f})$.
Then, the system (\ref{vis2-E5.34}) reads
    \begin{equation}
  \left\{\begin{array}{l}
         \partial_t \widetilde{v}_{q,i}-\mu\Delta \widetilde{v}_{q,i}-\Lambda\widetilde{c}_{q,i}
        =\widetilde{G}_q+R^1_q+R^2_q,\\
           \partial_t \widetilde{c}_{q}+\Lambda \widetilde{v}_q=\Lambda^{-1} \nabla\cdot\widetilde{L}_q+\Lambda^{-1} \nabla\cdot R^3_q,\\
                (\widetilde{v}_q,\widetilde{c}_q)(0,x)
                =(\Delta_qv_0,\Lambda^{-1} \nabla\cdot\Delta_qE_0)(x).
  \end{array}
  \right.\label{vis2-E5.35}
\end{equation}
Then we may write
    $$
        \left(\begin{array}{l}
         \Delta_j\widetilde{c}_q(t) \\
                    \Delta_j\widetilde{v}_q(t)
        \end{array}
        \right)=\mathcal{G}(\cdot,t)*\left(\begin{array}{l}
               \Delta_j\Lambda^{-1} \nabla\cdot\Delta_qE_0  \\
             \Delta_j\Delta_qv_0
        \end{array}
        \right)+\int^t_0 \mathcal{G}(\cdot,t-\tau)* \left(\begin{array}{l}
          \Delta_j\Lambda^{-1} \nabla\cdot(\widetilde{L}_q+ R^3_q)    \\
             \Delta_j(\widetilde{G}_q+R^1_q+R^2_q)
        \end{array}
        \right)d\tau
    $$
From Lemma \ref{vis2-L4.1}, we have, for all $2^j >R_0$, then for all $p\in[1,\infty]$,
    \begin{eqnarray}
     && \|\Delta_j\widetilde{v}_q(t)\|_{L^p}\nonumber\\
     &\leq&C  2^{- 2j} e^{-\vartheta t}\|\Delta_j\Delta_q \nabla E_0\|_{L^p}\nonumber\\
        &&
      +C\left(e^{-\vartheta 2^{2j}t}+  2^{-2j} e^{-\vartheta  t}
      \right)\|\Delta_j\Delta_qu_0\|_{L^p}\nonumber\\
        &&+C\int^t_0\big(
        2^{-2j} e^{-\vartheta (t-\tau)}(\|\Delta_j\nabla \widetilde{L}_q\|_{L^p}
        +\|\Delta_j\nabla R^3_q\|_{L^p})\label{vis2-E5.36}\\
        &&+ (e^{-\vartheta 2^{2j}(t-\tau)}+  2^{-2j}e^{-\vartheta (t-\tau)})
        (\|\Delta_j\widetilde{G}_q\|_{L^p}+\|\Delta_j  R^1_q\|_{L^p}+\|\Delta_j  R^2_q\|_{L^p})
        \big)d\tau,\nonumber
    \end{eqnarray}
\begin{eqnarray}
    &&\|\Delta_j\nabla \widetilde{c}_q\|_{L^p}\nonumber\\
        &\leq& Ce^{-\vartheta t}\left(
        \|\Delta_j\Delta_q \nabla E_0\|_{L^p}
        +\|\Delta_j\Delta_q v_0\|_{L^p}
        \right)\nonumber\\
    &&+C\int^t_0e^{-\vartheta (t-\tau)}
    \left(\|\Delta_j\nabla \widetilde{L}_q\|_{L^p}+\|\Delta_j\nabla R^3_q\|_{L^p}
    +\|\Delta_j  \widetilde{G}_q\|_{L^p}\right.\nonumber\\
    &&\left.+\|\Delta_j  R^1_q\|_{L^p}
    +\|\Delta_j R^2_q\|_{L^p}
    \right)d\tau.\label{vis2-E5.37}
\end{eqnarray}
We now have to fin suitable bounds for the right-hand side of (\ref{vis2-E5.36})--(\ref{vis2-E5.37}). In what follows we shall freely use the following estimate (Proposition 8 in \cite{Danchin2010}):
    \begin{equation}
      \|g\circ \psi_q\|_{L^p}=      \|  g\|_{L^p},
      \textrm{ for all function } g\in L^p,
    \end{equation}
        \begin{equation}
          \|\nabla \psi_q^{\pm1}\|_{L^\infty}\leq e^{C\widetilde{U}}\leq C,
        \end{equation}
         \begin{equation}
          \|\nabla \psi_q^{\pm1}-Id\|_{L^\infty}\leq e^{C\widetilde{U}}-1\leq e^{C\lambda_1}-1\leq C\lambda_1,
        \end{equation}
         \begin{equation}
          \|\nabla^k \psi_q^{\pm1}\|_{L^\infty}\leq C2^{(k-1)q}(e^{C\widetilde{U}}-1)\leq C2^{(k-1)q}\lambda_1,\textrm{ for }k=2,3,
        \end{equation}
   where $\widetilde{U}(t)=\int^t_0\|\nabla v(\tau)\|_{L^{\infty}}d\tau\leq CX_{p_2}(t)\leq C\lambda_1$.

  As for $\Delta_j\nabla \widetilde{L}_q$, we can use the fact that, owing to the chain rule,
    $$
    \Delta_j \nabla \widetilde{L}_q=\Delta_j(\partial_l L_q\circ\psi_q \nabla \psi_{q,l}).
    $$
 Then
    $$
    \|\Delta_j \nabla \widetilde{L}_q\|_{L^p}
    \leq C\|\nabla L_q\circ\psi_q\|_{L^p}\|\nabla \psi_q\|_{L^\infty}
     \leq C\|\nabla L_q \|_{L^p},
    $$
    combining the estimate (\ref{vis2-E5.34-1}), we get
         $$
    \|\Delta_j \nabla \widetilde{L}_q\|_{L^p}
    \leq C\|\nabla\Delta_q L\|_{L^p}+C\|\nabla v\|_{L^\infty}
  \sum_{q'\sim q}\|\nabla \Delta_{q'}E\|_{L^p}.
    $$
Using the Vishik's trick used in \cite{Vishik}, we can get
   \begin{equation}
    \|\Delta_j \nabla \widetilde{L}_q\|_{L^p}
    \leq C2^{q-j}\left(\|\nabla\Delta_q L\|_{L^p}+ \|\nabla v\|_{L^\infty}
  \sum_{q'\sim q}\|\nabla \Delta_{q'}E\|_{L^p}\right),\label{vis2-E5.44}
    \end{equation}
where we use the facts that
    $$
    \|\Delta_j\nabla \widetilde{L}_q\|_{L^p}\leq C2^{-j}\|\nabla^2(L_q\circ \psi_q)\|_{L^p},
    $$
$$
\partial_k\partial_l (L_q\circ\psi_q)=(\partial_m\partial_n L_q\circ\psi_q)\partial_k \psi_{q,m}
\partial_l \psi_{q,n}+(\partial_m  L_q\circ\psi_q)\partial_k\partial_l \psi_{q,m},
$$
  and
    $$
    \|\nabla^2(L_q\circ \psi_q)\|_{L^p}\leq C2^q\|\nabla L_q\|_{L^p}.
    $$
    Similar, we get
       \begin{equation}
    \|\Delta_j  \widetilde{G}_q\|_{L^p}
    \leq C2^{q-j}\left(\| \Delta_q G\|_{L^p}+ \|\nabla v\|_{L^\infty}
  \sum_{q'\sim q}\|  \Delta_{q'}v\|_{L^p}\right).\label{vis2-E5.45}
    \end{equation}

    Let us now turn to the study of $\nabla R_q^3$. As a preliminary step, we have
        $$
        \|\Delta_j\nabla R^3_q\|_{L^p}\leq C2^{-j}\|\Delta_j\nabla^2 R_q^3\|_{L^p}
        \leq C2^{-j}\| \nabla^2 R_q^3\|_{L^p}.
        $$
        Using the chain rule and H\"{o}lder inequality, we get
            \begin{eqnarray*}
              &&\|\nabla^2 R^3_q\|_{L^p}\\
                &\leq&C \|\nabla^3 \widetilde{v}_q\|\|\nabla\phi_q\circ\psi_q-Id\|_{L^\infty}
                +C \|\nabla^2 \widetilde{v}_q\|\|\nabla^2\phi_q\circ\psi_q \|_{L^\infty}\|\nabla \psi_q \|_{L^\infty}\\
                &&+C \|\nabla  \widetilde{v}_q\|\left(\|\nabla^3\phi_q\circ\psi_q \|_{L^\infty}\|\nabla \psi_q \|_{L^\infty}^2+\|\nabla^2\phi_q\circ\psi_q \|_{L^\infty}\|\nabla^2 \psi_q \|_{L^\infty}
                \right)
            \end{eqnarray*}
    and
        \begin{equation}
          \|\Delta_j\nabla R^3_q\|_{L^p}\leq C2^{3q-j}\lambda_1\|v_q\|_{L^p}.
        \end{equation}
 Similar, we have
       \begin{equation}
          \|\Delta_j R^1_q\|_{L^p}\leq C2^{ q-j}\lambda_1\|\nabla E_q\|_{L^p},
        \end{equation}
         \begin{equation}
          \|\Delta_j R^2_q\|_{L^p}\leq C2^{ 3q-j}\lambda_1\|v_q\|_{L^p}.\label{vis2-E5.48}
        \end{equation}

    From (\ref{vis2-E5.36})--(\ref{vis2-E5.37}), (\ref{vis2-E5.44})--(\ref{vis2-E5.48}), we have for $2^j>R_0$,
        \begin{eqnarray*}
          &&2^j\|\Delta_j  \widetilde{c}_q\|_{L^\infty_t(L^p)\cap L^1_t(L^p)}
          +\|\Delta_j \widetilde{v}_q\|_{L^\infty_t(L^p)}
          +2^{2j}\|\Delta_j \widetilde{v}_q\|_{ L^1_t(L^p)}\\
            &\lesssim &
            \|(\Delta_j\Delta_q \nabla E_0,\Delta_j\Delta_q v_0)\|_{L^p}
            +2^{q-j} \|(\Delta_q\nabla L,\Delta_q G)\|_{L^1_t(L^p)}\\
            &&+2^{q-j}\sum_{q'\sim q}\int^t_0\|\nabla v\|_{L^\infty}\|(\nabla E_{q'},v_{q'})\|_{L^p}d\tau
            +2^{q-j}\lambda_1\left(
            \|\nabla E_q\|_{L^1_t(L^p)}+2^{2q}\|v_q\|_{L^1_t(L^p)}
            \right).
        \end{eqnarray*}
Since
    \begin{equation}
      v_q=S_{q-N_0}\widetilde{v}_q\circ \phi_q+\sum_{j\geq q-N_0}\Delta_j \widetilde{v}_q\circ\phi_q,
    \end{equation}
        \begin{equation}
       c_q=S_{q-N_0}  \widetilde{c}_q\circ \phi_q+\sum_{j\geq q-N_0}\Delta_j  \widetilde{c}_q\circ\phi_q,
    \end{equation}
from the following estimate (\cite{Chen}, Lemma 2.3),
    \begin{equation}
    \|S_j(\Delta_q f\circ \psi_q^{\pm1})\|_{L^p}\leq C(\lambda_1+2^{j-q})\|\Delta_q f\|_{L^p},
    \end{equation}
    we have for  all $2^q> R_02^{ N_0}$,
         \begin{eqnarray*}
          &&2^q\|  {c}_q\|_{L^\infty_t(L^p)\cap L^1_t(L^p)}
          +\| {v}_q\|_{L^\infty_t(L^p)}
          +2^{2q}\| {v}_q\|_{ L^1_t(L^p)}\\
            &\leq &C2^{3N_0}\left(
            \|( \Delta_q \nabla E_0, \Delta_q v_0)\|_{L^p}
            +  \|(\Delta_q\nabla L,\Delta_q G)\|_{L^1_t(L^p)}
              + \sum_{q'\sim q}\int^t_0\|\nabla v\|_{L^\infty}\|(\nabla E_{q'},v_{q'})\|_{L^p}d\tau\right)\\
              &&
            +C 2^{3N_0}\lambda_1 \left(
            2^q\|  {E}_q\|_{L^\infty_t(L^p)\cap L^1_t(L^p)}
          +\| {v}_q\|_{L^\infty_t(L^p)}
          +2^{2q}\| {v}_q\|_{ L^1_t(L^p)}
            \right)\\
              &&
            +C 2^{-N_0}
            \left(
            2^q\|  {c}_q\|_{L^\infty_t(L^p)\cap L^1_t(L^p)}
          +\| {v}_q\|_{L^\infty_t(L^p)}
          +2^{2q}\| {v}_q\|_{ L^1_t(L^p)}
            \right).
        \end{eqnarray*}
            Choosing $N_0$ to be an integer such that $C2^{-N_0}\in(\frac{1}{4},\frac{1}{2})$, we have
            for  all $2^q> R_02^{ N_0}$,
         \begin{eqnarray*}
          &&2^q\|  {c}_q\|_{L^\infty_t(L^p)\cap L^1_t(L^p)}
          +\| {v}_q\|_{L^\infty_t(L^p)}
          +2^{2q}\| {v}_q\|_{ L^1_t(L^p)}\\
            &\leq &C \left(
            \|( \Delta_q \nabla E_0, \Delta_q v_0)\|_{L^p}
            +  \|(\Delta_q\nabla L,\Delta_q G)\|_{L^1_t(L^p)}
              + \sum_{q'\sim q}\int^t_0\|\nabla v\|_{L^\infty}\|(\nabla E_{q'},v_{q'})\|_{L^p}d\tau\right)\\
              &&
            +C  \lambda_1 \left(
            2^q\|  {E}_q\|_{L^\infty_t(L^p)\cap L^1_t(L^p)}
          +\| {v}_q\|_{L^\infty_t(L^p)}
          +2^{2q}\| {v}_q\|_{ L^1_t(L^p)}
            \right).
        \end{eqnarray*}
From Lemma \ref{vis2-L5.1}, we have for $ R_02^{ N_0-4}\leq 2^q\leq R_02^{ N_0}$,
    $$
    \|(\nabla E_{q},v_{q})\|_{L^\infty_t(L^p)}\leq C Y_s(t)\leq CY_{s,0},
    $$
    and
        \begin{eqnarray}
          &&\sum_{2^q> R_02^{ N_0}}2^{q(\frac{N}{p}-1)}\left(2^q\|  {c}_q\|_{L^\infty_t(L^p)\cap L^1_t(L^p)}
          +\| {v}_q\|_{L^\infty_t(L^p)}
          +2^{2q}\| {v}_q\|_{ L^1_t(L^p)}\right)\nonumber\\
            &\leq &CY_{s,0}+C \sum_{2^q> R_02^{ N_0}}2^{\frac{N}{p}-1}\left(
            \|( \Delta_q \nabla E_0, \Delta_q v_0)\|_{L^p}
            +  \|(\Delta_q\nabla L,\Delta_q G)\|_{L^1_t(L^p)}\right)\nonumber\\
            &&
              + C\int^t_0\|\nabla v\|_{L^\infty}\sum_{2^q> R_02^{ N_0}}2^{\frac{N}{p}-1}\|(\nabla E_{q},v_{q})\|_{L^p}d\tau\nonumber\\
              &&
            +C  \lambda_1 \sum_{2^q> R_02^{ N_0}}2^{\frac{N}{p}-1}\left(
            2^q\|  {E}_q\|_{L^\infty_t(L^p)\cap L^1_t(L^p)}
          +\| {v}_q\|_{L^\infty_t(L^p)}
          +2^{2q}\| {v}_q\|_{ L^1_t(L^p)}
            \right).\label{vis2-E5.53}
        \end{eqnarray}

           From   (\ref{vis2-E5.5})$_4$ and (\ref{vis2-E7.1}),
 we have
   \begin{eqnarray*}
   &&
      \|E^h \|_{\widetilde{L}^\infty_t(\dot{B}^{\frac{N}{p}}_{p,1})\cap
     \widetilde{L}^1_t(\dot{B}^{\frac{N}{p}}_{p,1})}\\
     &\leq& \|c^h \|_{\widetilde{L}^\infty_t(\dot{B}^{\frac{N}{p}}_{p,1})\cap
     \widetilde{L}^1_t(\dot{B}^{\frac{N}{p}}_{p,1})}
         +C \|(E^2)^h \|_{\widetilde{L}^\infty_t(\dot{B}^{\frac{N}{p}}_{p,1})\cap
     \widetilde{L}^1_t(\dot{B}^{\frac{N}{p}}_{p,1})}\\
      &\leq&\|c^h \|_{\widetilde{L}^\infty_t(\dot{B}^{\frac{N}{p}}_{p,1})\cap
     \widetilde{L}^1_t(\dot{B}^{\frac{N}{p}}_{p,1})}
     +C \|E \|_{\widetilde{L}^\infty_t(\dot{B}^{\frac{N}{p_2}}_{p_2,1})\cap
     \widetilde{L}^1_t(\dot{B}^{\frac{N}{p_2}}_{p_2,1})}
     \|E  \|_{L^\infty_t(\dot{B}^\frac{N}{p }_{p ,1})}    \\
      &\leq& \|c^h \|_{\widetilde{L}^\infty_t(\dot{B}^{\frac{N}{p}}_{p,1})\cap
     \widetilde{L}^1_t(\dot{B}^{\frac{N}{p}}_{p,1})}
   +C \lambda_1
   (\|E^l \|_{\widetilde{L}^\infty_t(\dot{B}^{s+1}_{2,r}) }
     +\|E^h  \|_{L^\infty_t(\dot{B}^\frac{N}{p }_{p ,1})}) .
  \end{eqnarray*}
When $\lambda_1$ is small enough, $C\lambda_1<\frac{1}{2}$, we have
 \begin{equation}
         \|E^h \|_{\widetilde{L}^\infty_t(\dot{B}^{\frac{N}{p}}_{p,1})\cap
     \widetilde{L}^1_t(\dot{B}^{\frac{N}{p}}_{p,1})} \leq  \|c^h \|_{\widetilde{L}^\infty_t(\dot{B}^{\frac{N}{p}}_{p,1})\cap
     \widetilde{L}^1_t(\dot{B}^{\frac{N}{p}}_{p,1})}
     +CY_{s,0}.\label{vis2-E5.54}
  \end{equation}

  From (\ref{vis2-E5.53})--(\ref{vis2-E5.54}) and
    $$
    \sum_{R_0< 2^q\leq  R_02^{ N_0}}  2^{q\frac{N}{p}}\|c_q\|_{L^\infty_t(L^p)\cap L^1_t(L^p)}
    \leq CY_s(t)\leq CY_{s,0},
    $$
    we have
  \begin{eqnarray*}
          &&\|E^h \|_{\widetilde{L}^\infty_t(\dot{B}^{\frac{N}{p}}_{p,1})\cap
     \widetilde{L}^1_t(\dot{B}^{\frac{N}{p}}_{p,1})}
          +\| {v}^h \|_{\widetilde{L}^\infty_t(\dot{B}^{\frac{N}{p}-1}_{p,1})\cap
     \widetilde{L}^1_t(\dot{B}^{\frac{N}{p}+1}_{p,1})} \nonumber\\
            &\leq &CY_{s,0}+CZ_{p,0}+C \|L^h\|_{\widetilde{L}^1_t(\dot{B}^{\frac{N}{p} }_{p,1})}
            +C \|G^h\|_{\widetilde{L}^1_t(\dot{B}^{\frac{N}{p}-1 }_{p,1})}\nonumber\\
            &&
              + C\int^t_0\|\nabla v\|_{L^\infty}\left(
              \|E^h \|_{\widetilde{L}^\infty_\tau(\dot{B}^{\frac{N}{p}}_{p,1}) }
          +\| {v}^h \|_{\widetilde{L}^\infty_\tau(\dot{B}^{\frac{N}{p}-1}_{p,1})}\right)d\tau\\
              &&
            +C  \lambda_1 \left(
\|E^h \|_{\widetilde{L}^\infty_t(\dot{B}^{\frac{N}{p}}_{p,1})\cap
     \widetilde{L}^1_t(\dot{B}^{\frac{N}{p}}_{p,1})}
          +\| {v}^h \|_{\widetilde{L}^\infty_t(\dot{B}^{\frac{N}{p}-1}_{p,1})\cap
     \widetilde{L}^1_t(\dot{B}^{\frac{N}{p}+1}_{p,1})}
                 \right).
        \end{eqnarray*}
        When $\lambda_1$ small enough, $C\lambda_1<\frac{1}{2}$, using the Gronwall's inequality, we get
          \begin{eqnarray}
          &&\|E^h \|_{\widetilde{L}^\infty_t(\dot{B}^{\frac{N}{p}}_{p,1})\cap
     \widetilde{L}^1_t(\dot{B}^{\frac{N}{p}}_{p,1})}
          +\| {v}^h \|_{\widetilde{L}^\infty_t(\dot{B}^{\frac{N}{p}-1}_{p,1})\cap
     \widetilde{L}^1_t(\dot{B}^{\frac{N}{p}+1}_{p,1})} \nonumber\\
            &\leq &CY_{s,0}+CZ_{p,0}+C \|L^h\|_{\widetilde{L}^1_t(\dot{B}^{\frac{N}{p} }_{p,1})}
            +C \|G^h\|_{\widetilde{L}^1_t(\dot{B}^{\frac{N}{p} -1}_{p,1})}.\label{vis2-E5.55}
        \end{eqnarray}
 Finally,  from Proposition \ref{vis2-P3.1-0}, we get
    \begin{eqnarray}
      \|L^h\|_{\widetilde{L}^1_t(\dot{B}^{\frac{N}{p} }_{p,1})}&\leq&\|\nabla v E\|_{\widetilde{L}^1_t(\dot{B}^{\frac{N}{p} }_{p,1})}
      +\|\partial_i(T'_Ev_i)\|_{\widetilde{L}^1_t(\dot{B}^{\frac{N}{p} }_{p,1})}\nonumber\\
        &\leq&C \|\nabla v\|_{\widetilde{L}^1_t(\dot{B}^{\frac{N}{p} }_{p,1})}\|E\|_{\widetilde{L}^\infty_t(\dot{B}^{\frac{N}{p_2} }_{p_2,1})}
        +C \|\nabla v\|_{\widetilde{L}^1_t(\dot{B}^{\frac{N}{p_2} }_{p_2,1})}\|E\|_{\widetilde{L}^\infty_t(\dot{B}^{\frac{N}{p} }_{p,1})}\nonumber\\
       &\leq& C\lambda_1 Z_p+CY_{s,0}.\label{vis2-E5.56}
    \end{eqnarray}
        Similarly, using   Proposition \ref{vis2-P3.1-0}, we get
	   \begin{equation}
  \|T'_{\nabla v_i}\cdot v\|_{\widetilde{L}^1_t(\dot{B}^{\frac{N}{p} -1}_{p,1})}= \|  T'_{ v }  v\|_{\widetilde{L}^1_t(\dot{B}^{\frac{N}{p}}_{p,1})}
  \leq C \|  v\|_{\widetilde{L}^1_t(\dot{B}^{\frac{N}{p_2}+1}_{p_2,1})}\|v\|_{\widetilde{L}^\infty_t(
  \dot{B}^{\frac{N}{p}-1}_{p,1})}\leq C\lambda_1 Z_p+CY_{s,0},
\end{equation}
and
    	   \begin{equation}
  \|E_{jk}\partial_j E_{ik}\|_{\widetilde{L}^1_t(\dot{B}^{\frac{N}{p} -1}_{p,1})}
 \leq \|E^2\|_{\widetilde{L}^1_t(\dot{B}^{\frac{N}{p}}_{p,1})}
  \leq C \|E\|_{\widetilde{L}^1_t(\dot{B}^\frac{N}{p_2}_{p_2,1})}\|E\|_{\widetilde{L}^\infty_t(
  \dot{B}^{\frac{N}{p}}_{p,1})}\leq C\lambda_1 Z_p+CY_{s,0}.
\end{equation}
From (\ref{vis2-E1.4}), we have
    \begin{eqnarray*}
      \Delta p&=&\partial_i(E_{jk}\partial_j E_{ik})-\partial_i(v_j\partial_j v_i) \\
      &=&\partial_i(E_{jk}\partial_j E_{ik})-\partial_i(T'_{\partial_j v_i}v_j)- \partial_j(T_{\partial_i v_j}v_i),
    \end{eqnarray*}
    then,
               \begin{eqnarray}
 \|G\|_{\widetilde{L}^1_t(\dot{B}^{\frac{N}{p} -1}_{p,1})}&\leq&\|\nabla p\|_{\widetilde{L}^1_t(\dot{B}^{\frac{N}{p} -1}_{p,1})}+
   \|E_{jk}\partial_j E_{ik}\|_{\widetilde{L}^1_t(\dot{B}^{\frac{N}{p} -1}_{p,1})}+  \|T'_{\nabla v}v\|_{\widetilde{L}^1_t(\dot{B}^{\frac{N}{p} -1}_{p,1})}\nonumber\\
    & \leq&   C\lambda_1 Z_p+CY_{s,0}.\label{vis2-E5.58}
\end{eqnarray}
From (\ref{vis2-E5.55})--(\ref{vis2-E5.56}) and (\ref{vis2-E5.58}), we have
    $$
    Z_p \leq CY_{s,0}+CZ_{p,0}+C\lambda_1 Z_p.
    $$
When $C\lambda_1\leq\frac{1}{2}$,  we can obtain  (\ref{vis2-E5.48-0}).
    \end{proof}
From a standard bootstrap argument, we obtain the following proposition.
\begin{prop}\label{vis2-P5.1}
Let $(v,E)$ be a solution to (\ref{vis2-E1.1}) which belongs to $V^s_{p_1,r}(T )$ with $s$,
$p_1$ and $r$ satisfying the condition of Theorem \ref{vis2-T1.2}. There exist two constants $\lambda$ and $C$
depending only on $N$, $p_2$ and $s$  such that if $X_{p_2,0}\leq\lambda$  then
    $$
    X_{p_i} (t)\leq C X_{p_i ,0}\ \textrm{for all}\ t\in[0, T ]
     \ \textrm{ and }\ i = 1, 2.
     $$
\end{prop}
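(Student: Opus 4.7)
The plan is to upgrade \textbf{Claim 1} (established for free by combining Lemmas \ref{vis2-L5.1} and \ref{vis2-L5.2}) from a conditional statement into an unconditional one by a standard continuity (bootstrap) argument, exploiting the smallness of the initial datum $X_{p_2,0}$. Concretely, Lemmas \ref{vis2-L5.1} and \ref{vis2-L5.2} together say that on any time interval on which $X_{p_2}(t) \leq \lambda_1$, one already has $X_{p_2}(t) \leq (C_1 + C_2) Y_{s,0} + C_2 Z_{p_2,0} \leq C_0 X_{p_2,0}$; the goal is to turn this into a bound valid on $[0,T]$.

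I would first fix $\lambda>0$ small enough that
$$
C_0\lambda \leq \tfrac{1}{2}\lambda_1.
$$
Assuming $X_{p_2,0} \leq \lambda$, define
$$
T^\ast := \sup\bigl\{\,t \in [0,T] \ :\ X_{p_2}(\tau) \leq \lambda_1 \ \text{for all } \tau \in [0,t]\,\bigr\}.
$$
Since $X_{p_2}(0) = X_{p_2,0} \leq \lambda < \lambda_1$, and since the membership $(v,E) \in V^s_{p_1,r}(T)$ implies that every norm entering the definition of $X_{p_2}(t)$ is continuous in $t$ (the $\widetilde{C}$-components by definition, and the $\widetilde{L}^1_t$, $\widetilde{L}^2_t$ components by dominated convergence on the outer $\ell^r_q$ summation), we have $T^\ast > 0$.

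The core step is to prove $T^\ast = T$. On $[0,T^\ast]$ the hypothesis (\ref{vis2-E5.7}) is met, so Lemmas \ref{vis2-L5.1} and \ref{vis2-L5.2} yield
$$
X_{p_2}(t) \leq C_0 X_{p_2,0} \leq C_0 \lambda \leq \tfrac{1}{2}\lambda_1, \qquad \forall\, t \in [0,T^\ast].
$$
If $T^\ast < T$, continuity of $t \mapsto X_{p_2}(t)$ would then give $X_{p_2}(t) \leq \lambda_1$ on a slightly larger interval, contradicting the definition of $T^\ast$. Hence $T^\ast = T$, and the conditional bound upgrades to $X_{p_2}(t) \leq C_0 X_{p_2,0}$ on all of $[0,T]$.

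Once $X_{p_2}(t) \leq \lambda_1$ is known on $[0,T]$, the hypothesis of Lemma \ref{vis2-L5.2} is satisfied with $p = p_1$ as well, so $Z_{p_1}(t) \leq C_2(Z_{p_1,0} + Y_{s,0})$; combining with the $Y_s(t) \leq C_1 Y_{s,0}$ bound of Lemma \ref{vis2-L5.1} gives $X_{p_1}(t) \leq C X_{p_1,0}$ and completes the proposition. The only mildly delicate point is the time-continuity of $X_{p_2}(\cdot)$ needed to run the bootstrap cleanly; this is routine from the regularity built into $V^s_{p_1,r}(T)$, so the argument is essentially immediate once the two lemmas are in hand.
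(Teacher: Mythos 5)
Your proposal is correct and is exactly what the paper does: the paper derives Claim 1 from Lemmas \ref{vis2-L5.1}--\ref{vis2-L5.2} and then simply invokes ``a standard bootstrap argument'' for Proposition \ref{vis2-P5.1}, which is precisely the continuity argument you wrote out (modulo the harmless imprecision that $X_{p_2}(0^+)$ equals $X_{p_2,0}$ only up to a constant depending on $R_0$, which is absorbed by shrinking $\lambda$).
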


\section{The proof of the global existence theorem}
So, let us proceed to the proof of global existence under the assumptions of Theorem
\ref{vis2-T1.2}.
To simplify the presentation, we assume throughout that $r < \infty$. The case $r =\infty$
follows from similar arguments. It is only a matter of replacing the strong topology
in $\dot{B}^{s}_{2,r}$ or $\dot{B}^{s+1}_{2,r}$ by weak topology.

\noindent\textbf{First step: smooth solutions.} We smooth out the  initial  data as follows,
    $$
   v_{0,n}=S_n v_0,\  E_{0,n}=S_n E_0.
    $$
Then, $(v_{0,n},E_{0,n})_{n\in\mathbf{N}}$ satisfy
    $$
   v_{0,n}\in \dot{B}^s_{2,r}\cap \dot{B}^{\frac{N}{2}-1}_{2,1},
     E_{0,n}\in \dot{B}^s_{2,r}\cap \dot{B}^\frac{N}{2}_{2,1},
    $$
and
    \begin{equation}
      u_{0,n} \rightarrow  u_0\ \textrm{ in }\dot{B}^s_{2,r},
      u_{0,n}^h\rightarrow  u_0^h\ \textrm{ in }  \dot{B}^{\frac{N}{p_1}-1}_{p_1,1},\label{vis2-E6.1}
    \end{equation}
    \begin{equation}
      E_{0,n}^l\rightarrow  E_0^l\ \textrm{ in }\dot{B}^s_{2,r},
      E_{0,n}^h\rightarrow  E_0^h\ \textrm{ in }\dot{B}^{s+1}_{2,r}\cap \dot{B}^\frac{N}{p_1}_{p_1,1}.\label{vis2-E6.2}
    \end{equation}
From Theorem \ref{vis2-T1.1}, for all $n\in\mathbf{N}$, we obtain a maximal solution $(v_n, E_n)$ over the time interval $[0,T^*_n)$ such that for all $T<T^*_n$, we have
    $$
    v_n\in \left(\widetilde{C}_T(\dot{B}^{\frac{N}{2}-1}_{2,1})\cap L^1_T(
    \dot{B}^{\frac{N}{2}+1}_{2,1})\right)^N
    \ \textrm{ and }E_n\in \left(\widetilde{C}_T(\dot{B}^{\frac{N}{2}}_{2,1})\right)^{N\times N}.
    $$
We claim that   $(v_n, E_n)$ is in $V^s_{p_1,r}(T)$ (and thus also in $V^s_{p_2,r}(T)$)  for all   $T<T^*_n$. Because $p_1\geq2$, we have
    $$
v_n^h\in \left( \widetilde{C}_T(\dot{B}^{\frac{N}{p_1}-1}_{p_1,1})\cap L^1_T(
    \dot{B}^{\frac{N}{p_1}+1}_{p_1,1})\right)^N
    \ \textrm{ and }    E_n^h\in \left(\widetilde{C}_T(\dot{B}^{\frac{N}{p_1}}_{p_1,1})
    \cap \widetilde{C}_T(\dot{B}^s_{2,r})\right)^{N\times N}.
    $$
Then, from the equation, we can show that the low frequencies of $E_n$    and of $v_n$ are on $\widetilde{C}_T(\dot{B}^s_{2,r})$ for all $T<T^*_n$.

\noindent\textbf{Second step: global existence and uniform bounds for $(v_n, E_n)$.} From Proposition \ref{vis2-P5.1}, there exists a constant $\lambda$ such that if $X_{p_2,0}^n \leq \lambda $, then
    \begin{equation}
      X_{p_i}^n(t)\leq C_1  X_{p_i,0}^n, \textrm{ for all }i=1,2\ t\in[0,T].\label{vis2-E6.3}
    \end{equation}
If we assume that   $X_{p_2,0}\leq \frac{\lambda}{2}$, then properties (\ref{vis2-E6.1})--(\ref{vis2-E6.2}) guarantees that the above smallness condition for the smoothed out initial data are satisfied for all large enough $n$.

Assume, by contradiction, that $T^*_n$ is finite. Then applying Proposition \ref{vis2-P5.1}, we get
    $$
    X^n_2(t)\leq CX^n_{2,0}\ \textrm{ for all} \ t\in[0,T^*_n).
    $$
So,  $v_n\in (\widetilde{L}^\infty_{T^*_n}(\dot{B}^{\frac{N}{2}-1}_{2,1}))^N$ and  $E_n\in (\widetilde{L}^\infty_{T^*_n}(\dot{B}^\frac{N}{2}_{2,1}))^{N\times N}$. Then, one can easily obtain that $(v_n,E_n)$ may be continued beyond $T^*_n$ into   a solution $(\tilde{v}_n,\tilde{E}_n)$ of (\ref{vis2-E1.4}) which coincides with $(v_n,E_n)$ on $[0,T^*_n)$ and such that, for some $T>T^*_n$,
$$
    v_n\in \left(\widetilde{C}_T(\dot{B}^{\frac{N}{2}-1}_{2,1})\cap L^1_T(
    \dot{B}^{\frac{N}{2}+1}_{2,1})\right)^N
    \ \textrm{ and }E_n\in\left( \widetilde{C}_T(\dot{B}^{\frac{N}{2}}_{2,1})\right)^{N\times N}.
    $$
And one can easily obtain that $(\tilde{v}_n,\tilde{E}_n)\in V^s_{p_1,r}(T)$. This stands in contradiction with the definition of $T^*_n$. Thus, $T^*_n=\infty$ and (\ref{vis2-E6.3}) holds true globally.

\noindent\textbf{Third step: passing to the limit.}
Using the   Aubin-Lions type argument \cite{Danchin2010}, we easily have that, up to extraction, $(v_n,E_n)$ converges to $(v,E)$, and $(v,E)\in V^s_{p,r}(\infty)$, $p=p_1$ or $p_2$, $(v,E)$ is the solution of (\ref{vis2-E1.4}).

Then, we can easily obtain that $(v,E)\in\mathcal{E}^{\frac{N}{p_1}}_T$. From the following local result in \cite{Qian}, we can obtain the uniqueness of the solution when $p_1\leq 2N$. This finishes the proof of Theorem \ref{vis2-T1.2}.

\begin{thm}[\cite{Qian}]
    Suppose that the
   initial data satisfies the incompressible constraints (\ref{vis2-E1.3-0}), $v_0\in
   ( \dot{B}_{p,1}^{\frac{N}{p}-1})^N$ and
 $E_0\in (\dot{B}_{p,1}^\frac{N}{p})^{N\times N}$, $p\in[1,2N]$.
    Then there exist $T>0$ and a  unique local solution for system (\ref{vis2-E1.4}) that satisfies
        $
  (v,E)\in  \mathcal{E}^\frac{N}{p}_T,
        $
where
$
    \mathcal{E}^s_T=\left(L^1([0,T]; \dot{B}_{p,1}^{s+1})\cap C([0,T];   \dot{B}_{p,1}^{s-1})
    \right)^N\times \left( C([0,T]; \dot{B}_{p,1}^s)
    \right)^{N\times N}.$
\end{thm}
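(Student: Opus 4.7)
The plan is to construct the local solution via a Friedrichs regularization scheme, obtaining uniform bounds by parabolic maximal regularity for $v$ and transport estimates for $E$, and to establish uniqueness by a paradifferential argument on the difference of two solutions at one order of regularity lower. The overall architecture parallels Danchin's classical approach for the compressible Navier--Stokes system.

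For existence, I would apply the Leray projector $\mathbb{P}$ to the momentum equation in (\ref{vis2-E1.4}) to remove the pressure, reducing to a heat equation for $v$ coupled to a pure transport equation with source for $E$:
\begin{align*}
\partial_t v - \mu\Delta v &= \mathbb{P}\bigl(-v\cdot\nabla v + E_{jk}\partial_j E_{ik} + \partial_j E_{ij}\bigr),\\
\partial_t E + v\cdot\nabla E &= \nabla v\,E + \nabla v.
\end{align*}
Truncating the initial data by $S_n$ and solving the spectrally localised system via Cauchy--Lipschitz yields a smooth approximation $(v^n,E^n)$. Uniform bounds follow from maximal regularity of the heat semigroup in $\dot B^{N/p-1}_{p,1}$ for the velocity and transport estimates in $\dot B^{N/p}_{p,1}$ for the deformation tensor; note that $\nabla v^n\in L^1_T(L^\infty)$ is controlled via the embedding $\dot B^{N/p+1}_{p,1}\hookrightarrow W^{1,\infty}$ (valid for $p<\infty$). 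The nonlinearities are handled using the algebra property of $\dot B^{N/p}_{p,1}$ and Proposition \ref{vis2-P3.1}, in particular
$$\|v\cdot\nabla v\|_{\dot B^{N/p-1}_{p,1}}\lesssim\|v\|_{\dot B^{N/p}_{p,1}}^2,\qquad \|E_{jk}\partial_j E_{ik}\|_{\dot B^{N/p-1}_{p,1}}\lesssim\|E\|_{\dot B^{N/p}_{p,1}}^2.$$
Combining these yields a Riccati-type inequality for $X^n(T):=\|(v^n,E^n)\|_{\mathcal E^{N/p}_T}$, from which one derives $X^n(T)\le 2X_0$ on some interval $[0,T]$ with $T$ depending only on $X_0$. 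An Aubin--Lions compactness argument then extracts a limit solution in $\mathcal E^{N/p}_T$.

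The main obstacle is uniqueness, since the equation for $E$ is purely hyperbolic and does not smooth. Following the standard compressible Navier--Stokes strategy, I would estimate the difference $(\delta v,\delta E):=(v_1-v_2,E_1-E_2)$ at one order of regularity lower, namely $\delta v\in L^\infty_T(\dot B^{N/p-2}_{p,1})\cap L^1_T(\dot B^{N/p}_{p,1})$ and $\delta E\in L^\infty_T(\dot B^{N/p-1}_{p,1})$. The crux is to bound the cross terms $\delta v\cdot\nabla E_2$, $E_1\cdot\nabla\delta v$ and $\delta E\cdot\nabla v_2$ in $\dot B^{N/p-1}_{p,1}$ via Bony's decomposition (Proposition \ref{vis2-P3.1-0}); the paraproducts are controlled by the algebra property, but the remainder terms require positivity of the sum of regularity indices, and for the critical pairings involved this amounts precisely to $p\le 2N$. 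A Gronwall argument in the weaker norm then forces $(\delta v,\delta E)\equiv 0$, completing the proof.
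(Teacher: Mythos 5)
Note first that the paper does not prove this statement at all: it is imported verbatim from \cite{Qian} and used only as a black box to get uniqueness at the very end of the proof of Theorem \ref{vis2-T1.2}. So there is no in-paper proof to compare with, and your sketch has to stand on its own. In outline it is the standard Danchin-type scheme (Friedrichs truncation, heat maximal regularity for $v$, transport estimates for $E$, uniqueness one derivative lower), which is certainly the right architecture; but three steps, as written, would fail.

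First, you never invoke the intrinsic constraints of the system, and without them your exponent range is wrong. In the uniqueness step the dangerous source term in the $\delta v$-equation is $\delta E_{jk}\partial_j E_{1,ik}$, which must be placed in $L^1_T(\dot{B}^{N/p-2}_{p,1})$; with $\delta E,\nabla E_1\in \dot{B}^{N/p-1}_{p,1}$, Bony's remainder requires $(N/p-1)+(N/p-1)>0$, i.e.\ $p<N$, not $p\le 2N$. To reach the stated range one must first use the propagated constraint $\nabla\cdot E^\top=0$ (Lemma \ref{vis2-L2.2}, which also has to be verified for the approximate solutions, since the smoothed data inherit it) to rewrite $\delta E_{jk}\partial_j E_{1,ik}=\partial_j(\delta E_{jk}E_{1,ik})$, and only then apply Proposition \ref{vis2-P3.1-0}: the critical pairing becomes $(N/p-1)+N/p>0$, i.e.\ $p<2N$. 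The same divergence-form rewriting $E_{jk}\partial_j E_{ik}=\partial_j(E_{jk}E_{ik})$ is what makes your existence bound work up to $p=2N$ (it is exactly what the present paper does in Lemma \ref{vis2-L5.2}); your unqualified estimate $\|E_{jk}\partial_jE_{ik}\|_{\dot{B}^{N/p-1}_{p,1}}\lesssim \|E\|^2_{\dot{B}^{N/p}_{p,1}}$ loses the $\ell^1$ summation at $p=2N$, where the remainder only lands in an $\ell^\infty$-type Besov space.

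Second, your claim that the Riccati inequality yields a time $T$ ``depending only on $X_0$'' cannot be correct in a critical space: the rescaling $(v_0,E_0)\mapsto (lv_0(l\cdot),E_0(l\cdot))$ preserves $\|v_0\|_{\dot{B}^{N/p-1}_{p,1}}$ and $\|E_0\|_{\dot{B}^{N/p}_{p,1}}$ while dividing the lifespan by $l^2$, so such a lower bound would give global existence for arbitrary data, which is open. The local argument must instead split $v=e^{\mu t\Delta}v_0+\bar v$ and exploit that $\|e^{\mu t\Delta}v_0\|_{L^1_T(\dot{B}^{N/p+1}_{p,1})}\to 0$ as $T\to 0$ (a data-dependent, not norm-dependent, smallness, available because $r=1$). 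Third, the endpoint $p=2N$ is genuinely delicate even after the fixes above: the remainder pairing then has regularity sum $0$, and the transport estimate for $\delta E$ at regularity $N/p-1=-N\min(1/p,1/p')$ sits exactly at the borderline index; a plain Gronwall argument does not close there, and one needs an $\ell^\infty$-Besov estimate combined with logarithmic interpolation and Osgood's lemma. As written, your proof covers at best $p<2N$.
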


\section*{Acknowledgements}   This work is supported partially by NSFC No.10871175, 10931007,
10901137,   Zhejiang Provincial Natural Science Foundation of China Z6100217,  and SRFDP
No.20090101120005.

\end{document}